\newtheoremstyle{standard}
{16pt} 
{16pt} 
{} 
{} 
{\bfseries}
{} 
{ } 
{{\thmname{#1~}}{\thmnumber{#2.}}\thmnote{~(#3)}} 
\newtheoremstyle{kursiv}
{16pt} 
{16pt} 
{\itshape} 
{} 
{\bfseries}
{} 
{ } 
{{\thmname{#1~}}{\thmnumber{#2.}}\thmnote{~(#3)}} 
\theoremstyle{standard}
\newtheorem{defn} [subsection]{Definition}
\newtheorem{ex} [subsection]{Example}
\newtheorem{ques} [subsection]{Question}
\newtheorem{rem} [subsection]{Remark}
\newtheorem{setup} [subsection]{}
\theoremstyle{kursiv}
\newtheorem{thm}[subsection]{Theorem}
\newtheorem{prop} [subsection]{Proposition}
\newtheorem{cor} [subsection]{Corollary}
\newtheorem{lem} [subsection]{Lemma}
\newcommand{\cD}{\ensuremath{\mathcal{D}}}
\newcommand{\cF}{\ensuremath{\mathcal{F}}}
\newcommand{\cG}{\ensuremath{\mathcal{G}}}
\newcommand{\Lf}{\ensuremath{\mathbf{L}}}
\newcommand{\dd}{\mathop{}\!\mathrm{d}}
\newcommand{\R}{\ensuremath{\mathbb{R}}}
\newcommand{\N}{\ensuremath{\mathbb{N}}}
\newcommand{\toto}{\ensuremath{\nobreak\rightrightarrows\nobreak}}
\newcommand{\coloneq}{\colonequals}
\newcommand{\SG}{S_{\cG}}
\newcommand{\Gtwo}{\ensuremath{G^{(2)}}}
\newcommand{\Prop}{\ensuremath{\operatorname{Prop}}}
\newcommand{\Bis}{\ensuremath{\operatorname{Bis}}}
\newcommand{\ev}{\ensuremath{\operatorname{ev}}}
\DeclareMathOperator{\one}{\mathbf{1}}
\DeclareMathOperator{\supp}{supp}
\DeclareMathOperator{\Diff}{Diff}
\DeclareMathOperator{\Evol}{Evol}
\DeclareMathOperator{\Fl}{Fl}
\DeclareMathOperator{\id}{id}
\newcommand{\Frechet}{Fr\'{e}chet}
\newcommand{\LB}[1][\cdot \hspace{1pt} , \cdot]{\left[\hspace{1pt} #1 \hspace{1pt} \right]}
\newcommand{\op}{\mathrm{op}}
\begin{document}

\title{A differentiable monoid of smooth maps on Lie groupoids} \author{Habib Amiri\footnote{University of Zanjan, Iran
\href{mailto:h.amiri@znu.ac.ir}{h.amiri@znu.ac.ir}}\ \ ~~and Alexander
Schmeding\footnote{TU Berlin, Germany
\href{mailto:schmeding@tu-berlin.de}{schmeding@tu-berlin.de}
}%
}
{\let\newpage\relax\maketitle}

\begin{abstract}
In this article we investigate a monoid of smooth mappings on the space of arrows of a Lie groupoid and its group of units.
The group of units turns out to be an infinite-dimensional Lie group which is regular in the sense of Milnor.
Furthermore, this group is closely connected to the group of bisections and the geometry of the Lie groupoid.  
Under suitable conditions, i.e.\ if the source map of the Lie groupoid is proper, one also obtains a differentiable structure on the monoid and can identify the bisection group as a Lie subgroup of its group of units.
Finally, relations between the (sub-)groupoids associated to the underlying Lie groupoid and subgroups of the monoid are obtained.

The key tool driving the investigation is a generalisation of a result by A.\ Stacey. 
In the present article, we establish this so called Stacey-Roberts Lemma. It asserts that pushforwards of submersions are submersions between the infinite-dimensional manifolds of mappings. The Stacey-Roberts Lemma is of independent interest as it provides tools to study submanifolds of and geometry on manifolds of mappings. 
\end{abstract}

\medskip

\textbf{Keywords:} Lie groupoid, topological semigroup, Stacey-Roberts Lemma, submersion, group of bisections, infinite-dimensional Lie group, regular Lie group, manifold of mappings, topological groupoid, fine very strong topology

\medskip

\textbf{MSC2010:} 58B25 (primary); 22E65, 22A22, 22A15, 58D05, 58D15, 58H05 (secondary)
\newpage
\tableofcontents

\section*{Introduction and statement of results} \addcontentsline{toc}{section}{Introduction and statement of results}
In the present article we investigate a monoid $\SG$ of smooth mappings on the arrow space of a Lie groupoid $\cG$.
Namely, for a Lie groupoid $\cG = (G \toto M)$ with source map $\alpha$ and target map $\beta$ we define the monoid
\begin{displaymath}
 \SG \coloneq \{ f \in C^\infty (G,G) \mid \beta \circ f = \alpha \} \quad \text{ with product } (f\star g) (x) \coloneq f(x)g\big(xf(x)\big)
\end{displaymath}
Our motivation to investigate this monoid is twofold.
On one hand, $\SG$ carries over the investigation of a similar monoid for continuous maps of a topological monoid in \cite{Amiri2017} to the smooth category.
While in the continuous setting one always obtains a topological monoid, the situation is more complicated in the smooth category.
This is due to the fact that on non-compact spaces the $C^\infty$-compact open topology has to be replaced by a Whitney type topology (cf.\ \cite{michor1980,HS2016} for surveys).
On the other hand, it has been noted already in \cite{Amiri2017} that $\SG$ is closely connected to the group of bisections $\Bis (\cG)$ of the underlying Lie groupoid $\cG$.
Roughly speaking a bisection gives rise to an element in the unit group of $\SG$ by associating to it its right-translation (we recall these constructions in Section \ref{sect: BIS}).
This points towards the geometric significance of the monoid of $\SG$ and its group of units as bisections are closely connected to the geometry of the underlying Lie groupoid (cf.\ \cite{MR3351079,MR3573833,MR3569066}, see also \cite{Amiri2017}).
Viewing the monoid $\SG$ and its group of units as an extension of the bisections, one is prompted to the question, how much of the structure and geometry of the Lie groupoid $\cG$ can be recovered from these monoids and groups of mappings.

In the present paper we investigate the monoid $\SG$ as a subspace of the space of smooth functions with the fine very strong topology (a Whitney type topology, called $\cF\cD$-topology in \cite{michor1980} which is recalled in Appendix \ref{app: topo}).
It turns out that if the underlying Lie groupoid is $\alpha$-proper, i.e.\ the source map $\alpha$ is a proper map, $\SG$ becomes a topological monoid with respect to this topology.

Throughout the article it will be necessary to work with smooth maps on infinite-dimensional spaces and manifolds beyond the Banach setting, The calculus used here is the so called Bastiani's calculus \cite{bastiani} (often also called Keller's $C^r$-theory~\cite{keller}).
We refer to \cite{Milnor84,BGN04,hg2002a,neeb2006} for streamlined expositions, but have included a brief recollection in Appendix \ref{Appendix: MFD}.
The locally convex setting has the advantage that it is compatible with the underlying topological framework. In particular, smooth maps and differentials of those are automatically continuous.
Thus we directly prove that $\SG$ becomes a submanifold of the infinite-dimensional manifold of smooth mappings which turns $\SG$ into a differentiable monoid if $\cG$ is $\alpha$-proper.
As an indispensable ingredient of the construction, we establish a generalisation of a result by A.\ Stacey. This Stacey-Roberts Lemma enables our investigation and is of independent interest.

Building on our investigation of the monoid $\SG$ we turn to its group of units $\SG(\alpha)$.
It turns out that  the differentiable strucure induced on $\SG(\alpha)$ by $\SG$ turns this group into an infinite-dimensional Lie group (even if $\cG$ is not $\alpha$-proper!).
We investigate the Lie theoretic properties of $\SG (\alpha)$ in Section \ref{sect: unit}.
Then in Section \ref{sect: topsub} we study certain (topological) subgroups of $\SG(\alpha)$, e.g.\ $\Bis  (\cG)$, which are motivated by the geometric interplay of $\cG$ and $\SG(\alpha)$.
Finally, we investigate the connection of these subgroups and certain subgroupoids of $\cG$ in Section \ref{sect: subcon}.
\medskip

We will now go into some more detail and explain the results in this paper.
First recall that the space of smooth mappings $C^\infty (G,G)$ for a finite-dimensional manifold can be made an infinite-dimensional manifold with respect to the fine very strong topology (see Appendix \ref{app: topo} for more details).
To turn $\SG$ and $\SG(\alpha)$ into submanifolds of this infinite-dimensional manifold, we need an auxiliary result:
\smallskip

\textbf{Stacey-Roberts Lemma} \emph{Let $X,Y$ be paracompact finite-dimensional manifolds and $S$ a finite-dimensional manifold, possibly with boundary or corners. Consider  $\theta \colon X \rightarrow Y$ a smooth and surjective submersion. Then the pushforward }
\begin{displaymath}
 \theta_* \colon C^\infty (S,X) \rightarrow C^\infty (S,Y) , \quad f \mapsto \theta \circ f
\end{displaymath}
\emph{is a smooth submersion.}\footnote{One has to be careful with the notion of submersion between infinite-dimensional manifolds, as there is no inverse function theorem at our disposal. For our purposes a submersion is a map which admits submersion charts (this is made precise in Definition \ref{defn: subm} below).}
\smallskip

This result is of independent interest and has been used in recent investigations of Lie groupoids of mappings \cite{1602.07973v2} and shape analysis on homogeneous spaces \cite{CES17}
Before we continue note that the Stacey-Roberts Lemma generalises an earlier result by A.\ Stacey (see \cite[Corollary 5.2]{1301.5493v1}).
In ibid.\ Stacey deals with a setting of generalised infinite-dimensional calculus and states that the pushforward of a submersion is a "smooth regular map" if the source manifold (i.e.\ $S$ for the space $C^\infty (S,X)$) is compact.
From this it is hard to discern the Stacey-Roberts Lemma even if $S$ is compact. It was first pointed out to the authors by D.M.\ Roberts that Stacey's result yields a submersion in the sense of Definition \ref{defn: subm} (see \cite[Theorem 5.1]{1602.07973v2}).
Note that the statement of the Stacey-Roberts Lemma is deceptively simple, as the proof is surprisingly hard (cf.\ Appendix \ref{app: detailedproofs}). To our knowledge a fully detailed proof (building on Stacey's ideas) has up to this point not been recorded in the literature (this is true even for the compact case).

Using the Stacey-Roberts Lemma, one obtains a submanifold structure on $\SG$ and one can ask whether the induced structures turn $\SG$ into a differentiable monoid.
\smallskip

\textbf{Theorem A} \emph{If the Lie groupoid $\cG$ is $\alpha$-proper, i.e.\ the source map is a proper map, then $\SG$ consists only of proper mappings.
Further, the manifold structure induced by $C^\infty (G,G)$ turns $(\SG, \star)$ into a differentiable monoid.}
\smallskip

Note that this in particular implies that $(\SG,\star)$ is a topological monoid with respect to the topology induced by the fine very strong topology.
Due to the first part of Theorem A, $\SG$ contains only proper mappings if $\cG$ is $\alpha$-proper and our proof uses that the joint composition map $\text{Comp} (f,g) \coloneq f\circ g$ is continuous (or differentiable) only if we restrict $g$ to proper maps.

One can prove (see \cite{Amiri2017}) that the monoid $(\SG,\star)$ is isomorphic to a submonoid of $\big(C^\infty (G,G), \circ_\op\big)$ (where  \textquotedblleft$\circ_\op$\textquotedblright\, is composition in opposite order) by means of 
$$R \colon \SG \rightarrow C^\infty (G,G) , \quad f \mapsto \big(x \mapsto x f(x)\big).$$
Note that $R$ makes sense since $x$ and $f(x)$ are composable in $\cG$ by definition of $\SG$.
This monoid monomorphism enables the treatment of the group of units of $\SG$ as an infinite-dimensional Lie group.
To this end, note that the group of units of $\SG$ coincides with the group
\begin{align*}
 \big(\SG (\alpha ) \coloneq \big\{ f \in \SG \mid R(f) \in \Diff (G)\} , \star\big).
\end{align*}
By virtue of the monoid isomorphism, we can identify $\SG(\alpha)$ with the subgroup $R_{\SG (\alpha)} \coloneq \{\varphi \in \Diff (G) \mid \beta \circ \varphi = \beta\}$ of $\Diff (G)$.
Another application of the Stacey-Roberts Lemma shows that $R_{\SG (\alpha)}$ is indeed a closed Lie subgroup of $\Diff (G)$.
This enables us to establish the following result.
\smallskip

\textbf{Theorem B} \emph{Let $\cG$ be a Lie groupoid, then the group of units $\SG(\alpha)$ of the monoid $(\SG, \star)$ is an infinite-dimensional Lie group with respect to the submanifold structure induced by $C^\infty (G,G)$.
Its Lie algebra is isomorphic to the Lie subalgebra}
$$\mathfrak{g}_\beta \coloneq \{X \in \mathfrak{X}_c (G) \mid T\beta \circ X (g) = 0_{\beta (g)} \forall g \in G\}$$
\emph{of the Lie algebra $\big(\mathfrak{X}_c (G), \LB\big)$ of compactly supported vector fields with the usual bracket of vector fields.}\smallskip

We then study Lie theoretic properties of the Lie group $\big(\SG (\alpha), \star\big)$. To understand these results, recall the notion of regularity (in the sense of Milnor) for Lie groups.

Let $H$ be a Lie group modelled on a locally convex space, with identity element $\one$, and $r\in \N_0\cup\{\infty\}$. We use the tangent map of the
 right translation $\rho_h\colon H\to H$, $x\mapsto xh$ by $h\in H$ to define $v.h\coloneq T_{\one} \rho_h(v) \in T_h H$ for $v\in T_{\one} (H) =: \Lf (H)$.
 Following \cite{hg2015}, $H$ is \emph{$C^r$-semiregular} if for each $C^r$-curve $\gamma\colon [0,1]\rightarrow \Lf(H)$ the initial value problem
 \begin{equation}\label{eq: regular}
  \begin{cases}
  \eta'(t)&= \gamma(t).\eta(t)\\ \eta(0) &= \one
  \end{cases}
 \end{equation}
 has a (necessarily unique) $C^{r+1}$-solution $\Evol (\gamma)\coloneq\eta\colon [0,1]\rightarrow H$. If in addition
 \begin{displaymath}
  \mathrm{evol} \colon C^r\big([0,1],\Lf(H)\big)\rightarrow H,\quad \gamma\mapsto \Evol(\gamma)(1)
 \end{displaymath}
 is smooth, we call $H$ a $C^r$-regular Lie group. If $H$ is $C^r$-regular and $r\leq s$, then $H$ is also
 $C^s$-regular. A $C^\infty$-regular Lie group $H$ is called \emph{regular}
 \emph{(in the sense of Milnor}) -- a property first defined in
 \cite{Milnor84}. Every finite dimensional Lie group is $C^0$-regular (cf.\ \cite{neeb2006}).
 Several important results in infinite-dimensional Lie theory are only available for regular Lie groups (see \cite{Milnor84,neeb2006,hg2015}, cf.\ also \cite{conv1997} and the references therein).
Our results subsume the following (cf.\ Corollary \ref{cor: SGA}):
 \smallskip

 \textbf{Theorem C} \emph{The Lie group $\big(\SG (\alpha), \star\big)$ is $C^1$-regular.}
 \smallskip

\noindent
Under mild topological assumptions on the manifold of arrows, one can even sharpen Theorem C to obtain $C^0$-regularity of the Lie group $\big(\SG (\alpha), \star\big)$. This seemingly minor improvement is quite important as it enables one to establish Lie theoretic and geometric tools for the infinite-dimensional group. For example, \cite{Hanusch18}
showed that the strong Trotter formula holds for $C^0$ regular Lie groups. It is currently unknown whether similar results can be obtained for $C^1$-regular Lie groups.\smallskip

 We then study (topological) subgroups of $\SG (\alpha)$ which arise from the action of $\SG$
 \begin{displaymath}
   x.f=R(f)(x) = xf(x) , \quad x \in G, f \in \SG.
 \end{displaymath}
(see e.g.\ \cite[Proposition 6]{Amiri2017}) and the interaction of the geometry of $\cG$ with $\SG$.
For example we investigate subgroups of $\SG(\alpha)$ which arise as setwise stabiliser $F_H\big(\SG(\alpha)\big)$ of a subgroupoid $H\subseteq G$ of $\cG$

Furthermore, the connection between the groups $\Bis (\cG)$ of bisections of the Lie groupoid $\cG$ and $\SG (\alpha)$ is studied.
A bisection $\sigma$ of $\cG = (G\toto M)$ is a smooth map $\sigma\colon  M\rightarrow G$ which satisfies $\beta \circ \sigma = \id_M$ and $\alpha \circ \sigma \in \Diff (M)$.
The group $\Bis (\cG)$ plays an important role in the study of Lie groupoids (cf.\ \cite{MR3573833,ZHUO}) and it is well known (see e.g.\ \cite{Mackenzie05}) that bisections give rise to right-translations of the groupoid $\cG$.
This mechanism yields a group monomorphism $\Psi \colon \Bis (\cG) \rightarrow \SG(\alpha)$ which identifies $\Bis (\cG)$ with a subgroup of $\SG(\alpha)$.
According to \cite[Theorem B]{HS2016}, $\Bis(\cG)$ with the fine very strong topology forms a topological group.
In this case we prove that $\Psi$ becomes a morphism of topological groups if $\cG$ is $\alpha$-proper.
Morover, if $\Bis (\cG)$ carries a suitable Lie group structure, e.g.\ if $M$ is compact,\footnote{Following the sketch provided in \cite[Theorem D]{HS2016}, $\Bis (\cG)$ will be a Lie group even if $M$ is not compact. However, to our knowledge no full proof of this fact exists in the literature.} $\Psi$ is a morphism of Lie groups.

Finally, the last section of the present paper investigates connections between subgroups of $\SG(\alpha)$ and Lie subgroupoids of $\cG$.
The idea here basically stems from the similarity between the structure of groups $\Bis(\cG)$ and $\SG(\alpha)$.
An interesting question is, whether the underlying Lie groupoid $\cG$ is determined (as a set or a manifold) by its group of bisections. For example this question was investigated in \cite{MR3573833} (see also \cite{ZHUO}).
Our aim is to consider related question for $\SG(\alpha)$. Further, we obtain some interesting characterisations of elements in $\SG(\alpha)$ by studying the graphs of its elements in $G\times G$.
Among the main results established in Section \ref{sect: subcon} we prove that
\begin{itemize}
 \item diffeomorphic subgroupoids of $\cG$ give rise to topological isomorphic setwise stabiliser subgroups of $\SG(\alpha)$,
 \item every subgroup of $\SG (\alpha)$ gives rise to a subgroupoid of the groupoid of composable elements $\Gtwo$.
\end{itemize}

\section{Notation and preliminaries}
\label{sec:locally_convex_lie_groupoids_and_lie_groups}

Before we begin, let us briefly recall some conventions that we are using in this paper.

\begin{setup}[Conventions]\label{1}
 We write $ \mathbb{N} := \lbrace 1,2,\dots \rbrace $ and $ \mathbb{N}_0 := \lbrace 0,1,\dots \rbrace $.
 By $G,M, X, Y$ we denote paracompact finite-dimensional manifolds over the real numbers $\mathbb{R}$. We assume that all topological spaces in this article are Hausdorff.
\end{setup}

\begin{setup}[Spaces of smooth mappings]
 We write $C^\infty_{fS} (X,Y)$ for the space of smooth mappings endowed with the \emph{fine very strong topology}.
 Details on this fine very strong topology and the coarser very strong topology are recalled in Appendix \ref{app: topo}.
 Subsets of $C^\infty_{fS} (X,Y)$ will always be endowed with the induced subspace topology. Note that the set of all smooth diffeomorphisms $\Diff (X)$ is an open subset of $C^\infty_{fS} (X,X)$.

 As all spaces of smooth mappings in the present paper will be endowed with this topology, whence we usually suppress the subscript.
 As we recall in Appendix \ref{app: topo} this topology turns $C^\infty (X,Y)$ into an infinite-dimensional manifold (cf.\ Appendix \ref{Appendix: MFD} for the basics of calculus used). This manifold structure turns $\Diff (X)$ into an infinite-dimensional Lie group with respect to the composition of mappings.
\end{setup}

We refer to \cite{Mackenzie05} for an introduction to (finite-dimensional) Lie groupoids. The notation for Lie groupoids and their structural maps also
follows \cite{Mackenzie05}.

\begin{setup}\label{2}
Let $\cG = (G \toto M)$ be a finite-dimensional Lie groupoid over $M$ with source projection
$\alpha \colon G \rightarrow M$ and target projection
$\beta \colon G \rightarrow M$. Note that $\alpha$ and $\beta$ are submersions, whence the set of composable arrows $$G^{(2)} = \big\{ (x,y) \in G\times G \mid \alpha (x) = \beta (y) \big\} \subseteq G \times G$$ is a submanifold.
We denote by $m \colon G^{(2)} \rightarrow G$ the partial multiplication and let $\iota \colon G \rightarrow G$ be the inversion.
In the following we will always identify $M$ with an embedded submanifold of $G$ via the unit map $1 \colon M \rightarrow G$ (in the following we suppress the unit map in the notation without further notice).
\end{setup}

\begin{rem}\label{G^2} One obtains a groupoid $\Gtwo$ by defining:
$$\beta^2(x,y)=(x,y)(xy,y^{-1})=(x,\beta(y))=\big(x,\alpha(x)\big), \text{ target map}$$
$$\alpha^2(x,y)=(xy,y^{-1})(x,y)=(xy,y^{-1}y)=\big(xy,\alpha(xy)\big) \text{ source map}.$$
The object space is then $(\Gtwo)^0 = \big\{ \big(x,\alpha (x)\big) \mid x \in G\big\}$ and the set of composable pairs, $(\Gtwo)^{(2)}$, is given by $\big\{\big((x,y),(z,w)\big):\ z=xy\big\}$ with partial multiplication $(x,y)(xy,w)=(x,yw)$
and inverse $(x,y)^{-1}=(xy,y^{-1})$.

In general, the groupoid $\Gtwo$ will not be a Lie groupoid (as for example $\alpha^2$ and $\beta^2$ will fail to be submersions).
Note however, that $\Gtwo$ becomes a topological groupoid with respect to the subspace topology induced by $G\times G \times G \times G$.
\end{rem}

\section{A monoid of smooth maps of a Lie groupoid}

We are interested in a monoid which was first studied in \cite{Amiri2017} in the context of topological groupoids.
Here we study a related monoid, replacing continuous mappings with smooth ones and topological groupoids by Lie groupoids.

\begin{defn}\label{defn: SG}
Let $\cG = (G \toto M)$ be a Lie groupoid, then we define the set
$$\SG \coloneq \big\{f \in C^\infty (G,G) \mid \beta \circ f = \alpha \big\},$$
which becomes a monoid with respect to the binary operation
$$(f \star g) (x) = f(x)g\big(xf(x)\big), \quad f,g \in S_\cG , \ x \in G.$$
The source map $\alpha$ is the unit element of $S_\cG$.\footnote{Here we identify $M \subseteq G$ via the unit map, i.e.\ without this identification the unit is $1 \circ \alpha$! }
\end{defn}

\begin{rem}
Obviously, one can define another monoid $S_\cG'$ of smooth self maps by switching the roles of $\alpha$ and $\beta$ in the definition.
However, as shown in \cite[Proposition 1]{Amiri2017} this leads to an isomorphic monoid, whence it suffices to consider only $\SG$.
\end{rem}

Instead of directly investigating the $\SG$ as a topological monoid, we establish a differentiable structure which, under suitable conditions, turns $(\SG, \star)$ into a differentiable (infinite-dimensional) monoid. As smooth maps in our setting are continuous\footnote{Since we are working beyond Banach manifolds (even beyond the \Frechet setting) this property is not automatic and depends on the generalisation of calculus used. See e.g.\ \cite{conv1997} for a calculus with smooth discontinuous maps.}, this will in particular turn $(\SG ,\star)$ into a topological monoid (it turns out that the conditions needed to turn $\SG$ into a topological monoid are strong enough to turn it into a differentiable monoid). Before continuing, we urge the reader to recall from Appendix \ref{Appendix: MFD} the basics of infinite-dimensional calculus used.

\subsection*{Interlude: The Stacey-Roberts Lemma}\addcontentsline{toc}{subsection}{Interlude: The Stacey-Roberts Lemma}
To obtain a manifold structure on $\SG$, note that it is the preimage of the singleton $\{\alpha\} \subseteq C^\infty (G,M)$ under the pushforward $\beta_* \colon C^\infty (G,G) \rightarrow C^\infty (G,M), f \mapsto \beta \circ f$.
Due to a theorem of Stacey \cite[Corollary 5.2]{1301.5493v1}, the pushforward of a surjective submersion between finite-dimensional manifolds is a submersion if the source manifold $G$ is compact. To extend this to non-compact source manifolds, note first that by \cite{Glofun} a submersion between infinite-dimensional manifolds should be the following.

\begin{defn}\label{defn: subm}
	Let $X,Y$ be possibly infinite-dimensdional manifolds and  $q \colon X\rightarrow Y$ be smooth. Then we call $q$ \emph{submersion}, if for each $x \in X$, there exists a chart $\Psi \colon U_\Psi \rightarrow V_\Psi \subseteq E$ of $X$ with $x \in U_\Psi$ and a chart $\psi \colon U_{\psi} \rightarrow V_\psi\subseteq F$ of $Y$ with $q (U_\Psi) \subseteq U_\psi$ and $\psi \circ q \circ \Psi^{-1} = \pi|_{V_\Psi}$ for a continuous linear map $\pi \colon E \rightarrow F$ which has a continuous linear right inverse $\sigma \colon F\rightarrow E$, (i.e.\ $\pi \circ \sigma = \text{id}_F$).
\end{defn}

Due to the nature of charts of manifolds of mappings (Appendix \ref{app: topo}), the proof of the following is highly non trivial, whence several auxiliary results are collected in Appendix \ref{app: detailedproofs}.

\begin{lem}[Stacey-Roberts]\label{staceyroberts}
	Let $M,X,Y$ be finite-dimensional, paracompact manifolds and $\theta \colon X \rightarrow Y$ be a smooth surjective submersion.
	Then the pushforward $\theta_* \colon C^\infty (M,X) \rightarrow C^\infty (M,Y)$ is a smooth submersion in the sense of Definition \ref{defn: subm}.
\end{lem}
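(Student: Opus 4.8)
\emph{Proof strategy.} The plan is to produce, for every $f\in C^\infty(M,X)$, a submersion chart for $\theta_*$ in the sense of Definition~\ref{defn: subm}. Recall from Appendix~\ref{app: topo} that a chart of $C^\infty(M,X)$ around $f$ is built from a \emph{local addition} $\Sigma_X\colon TX\supseteq\Omega_X\to X$ on $X$, identifying the maps $g$ sufficiently close to $f$ (namely those with $(f(m),g(m))\in(\pi_{TX},\Sigma_X)(\Omega_X)$ for all $m$ and $g=f$ off a compact set) with an open $0$-neighbourhood of the space $\Gamma_c(f^*TX)$ of (compactly supported) sections of the pullback bundle, via $g\mapsto(\pi_{TX},\Sigma_X)^{-1}\circ(f,g)$. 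The decisive first step is the observation that if one can choose local additions $\Sigma_X$ on $X$ and $\Sigma_Y$ on $Y$ that are \emph{$\theta$-related}, i.e.\ $T\theta(\Omega_X)\subseteq\Omega_Y$ and $\theta\circ\Sigma_X=\Sigma_Y\circ T\theta|_{\Omega_X}$, then in the charts around $f$ and around $\theta\circ f$ built from $\Sigma_X$ and $\Sigma_Y$ the map $\theta_*$ is nothing but
\[
  s\longmapsto (f^*T\theta)\circ s,\qquad s\in\Gamma_c(f^*TX),
\]
where $f^*T\theta\colon f^*TX\to(\theta\circ f)^*TY$ denotes the pullback along $f$ of the bundle morphism $T\theta$; moreover $\theta_*$ then maps the chart domain around $f$ into the one around $\theta\circ f$. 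Both facts are immediate from the defining formulas.

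Granting such a $\theta$-related pair, the proof concludes quickly. Since $\theta$ is a submersion, $T\theta$ is fibrewise surjective, hence so is $f^*T\theta$, with kernel $f^*(V\theta)$ where $V\theta\coloneq\ker T\theta$ is the vertical bundle of $\theta$. Choosing a Riemannian metric on $X$ splits $f^*TX\cong f^*(V\theta)\oplus(\theta\circ f)^*TY$ as vector bundles over $M$, carrying $f^*T\theta$ to the projection onto the second summand. The section functor turns this into a topological direct sum $\Gamma_c(f^*TX)\cong\Gamma_c(f^*V\theta)\oplus\Gamma_c((\theta\circ f)^*TY)$ under which $\theta_*=(f^*T\theta)_*$ is the continuous linear projection onto the second factor, whose continuous linear right inverse is the inclusion of that factor. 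So the charts above are genuine submersion charts; together with smoothness of $\theta_*$ (smoothness of pushforwards on manifolds of mappings, Appendix~\ref{Appendix: MFD}), the Lemma follows.

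It remains to construct $\theta$-related local additions, which is the heart of the matter and the reason the technical material in Appendix~\ref{app: detailedproofs} is needed: local additions do not patch together by partitions of unity, so a compatible choice for $X$ and $Y$ simultaneously is far from automatic. The plan here: since the construction is local over the connected components of $Y$ and only involves those components of $X$ meeting a given map, one may assume $X$ is second countable, and then — applying Whitney's embedding theorem to $(\theta,j)\colon X\to Y\times\R^N$ with $j$ an embedding — that $X$ is a submanifold of $Y\times\R^N$ with $\theta=\mathrm{pr}_1|_X$ still a surjective submersion. Writing $V\mathrm{pr}_1=(Y\times\R^N)\times\R^N$ for the (trivialised) vertical bundle of $\mathrm{pr}_1$, the submersion property forces $TX+V\mathrm{pr}_1|_X=T(Y\times\R^N)|_X$, so one may pick a subbundle $\nu\subseteq V\mathrm{pr}_1|_X$ with $TX\oplus\nu=T(Y\times\R^N)|_X$. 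Because $\nu$ lies inside the trivialised vertical directions, the map $(x,w)\mapsto\big(\mathrm{pr}_Y(x),\mathrm{pr}_{\R^N}(x)+w\big)$ restricts, near the zero section of $\nu$, to a diffeomorphism onto an open neighbourhood $U$ of $X$ in $Y\times\R^N$ that commutes with $\mathrm{pr}_1$; composing its inverse with the bundle projection $\nu\to X$ yields a smooth retraction $r\colon U\to X$ with $\mathrm{pr}_1\circ r=\mathrm{pr}_1|_U$ and $r|_X=\id$.

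With this in hand, fix any Riemannian metric $g_Y$ on $Y$, set $\Sigma_Y\coloneq\exp^{g_Y}$, equip $Y\times\R^N$ with the local addition $\Sigma_Y\times\Sigma_{\R^N}$ (where $\Sigma_{\R^N}(v)\coloneq\pi_{\R^N}(v)+v$), which is visibly $\mathrm{pr}_1$-related to $\Sigma_Y$, and define $\Sigma_X\coloneq r\circ(\Sigma_Y\times\Sigma_{\R^N})|_{TX}$ on a sufficiently small neighbourhood of the zero section of $TX$. One checks that $(\pi_{TX},\Sigma_X)$ is a diffeomorphism onto a neighbourhood of the diagonal (using that $r$ retracts onto $X$), so $\Sigma_X$ is a local addition; and $\mathrm{pr}_1\circ r=\mathrm{pr}_1|_U$ together with $\theta=\mathrm{pr}_1|_X$ makes it $\theta$-related to $\Sigma_Y$, as required. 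I expect this last step — producing a genuinely compatible pair of local additions and controlling it over non-compact base manifolds (the source $M$, possibly with corners, plays only a passive role throughout) — to be the main obstacle; the remaining steps are routine manipulations with charts on manifolds of mappings.
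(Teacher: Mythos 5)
Your argument is correct and shares the paper's overall skeleton: everything reduces to producing local additions on $X$ and $Y$ that intertwine $\theta$ and $T\theta$, after which the canonical charts represent $\theta_*$ as the continuous linear map $(T\theta)_*$ on $\Gamma_c(f^*TX)$, split by the horizontal lift coming from a decomposition $TX=\ker T\theta\oplus\mathcal{H}$ --- exactly the computation recorded in diagram \eqref{eq: diagfinal} of the paper. Where you genuinely diverge is in the construction of the adapted pair, which is indeed the hard part. The paper (following Stacey) fixes an Ehresmann connection, turns $\theta$ into a Riemannian submersion, takes $\eta_Y=\exp_Y$ together with the horizontal part of $\exp_X$, and then corrects in the vertical direction using parallel transport in $\ker T\theta$ and the time-one flows of a smoothly parametrised family $X_\theta\colon TX\to\mathfrak{X}_c(X)$ of compactly supported vector fields (Lemmas \ref{lem: VFsubm} and \ref{lem: adaptedlocadds}); checking that the result is again a local addition occupies most of Appendix \ref{app: detailedproofs}. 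You instead realise $\theta$ as the restriction of the trivial projection $\mathrm{pr}_1\colon Y\times\R^N\to Y$ via the fibred Whitney embedding $(\theta,j)$, note that the product local addition $\exp_Y\times(\text{affine})$ is trivially $\mathrm{pr}_1$-related to $\exp_Y$, and transport it to $X$ through a tubular-neighbourhood retraction $r$ whose fibres lie in the $\R^N$-directions, so that $\mathrm{pr}_1\circ r=\mathrm{pr}_1$ and hence $\theta\circ\Sigma_X=\Sigma_Y\circ T\theta$; the differential computation showing $\Sigma_X$ is again a local addition is immediate from $r|_X=\id$. This is a genuinely different and arguably more elementary route to the key lemma: it trades the flow and parallel-transport analysis for the Whitney embedding plus a standard tubular neighbourhood. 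The price is the reduction to second countable pieces so that Whitney applies; this should be phrased as constructing the adapted local addition separately on each connected component of $X$ (each such component is $\sigma$-compact, and the fixed $\exp_Y$ serves all of them simultaneously) rather than as restricting to components met by a given map, since the chart construction uses a local addition on all of $TX$. With that cosmetic repair, and the routine shrinking arguments you defer, your proof is complete; the final verification that the local representative is a split continuous linear surjection coincides with the paper's.
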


\begin{proof}
	We already know by \ref{setup: pf} that $\theta_*$ is smooth, whence we only have to construct submersion charts for $\theta_*$.
	
	To this end denote by $\mathcal{V} \subseteq TX$ the vertical subbundle given fibre-wise by $\mathrm{Ker}\, T_p \theta$.
	Lemma \ref{lem: adaptedlocadds} allows us to choose a smooth horizontal distribution $\mathcal{H} \subseteq TX$ (i.e.\ a smooth subbundle such that $T X = \mathcal{V} \oplus \mathcal{H}$) and local additions $\eta_X$ on $X$ and $\eta_Y$ on $Y$ such that the following diagram commutes:
	\begin{equation}\label{eq: diagsrlemma} \begin{aligned}
	\begin{xy}
	\xymatrix{
		TX =  \mathcal{V} \oplus\mathcal{H} \ar[d]^{0 \oplus T\theta|_{\mathcal{H}}} &  \ar[l]_-{\supseteq}\Omega_X \ar[r]^{\eta_X}  & X \ar[d]^{\theta}  \\
		TY & \ar[l]_-{\supseteq} \Omega_Y \ar[r]^{\eta_Y}  &   Y
	}
	\end{xy}\end{aligned}
	\end{equation}
	Now fix $f \in C^\infty (M, X)$ and construct submersion charts for $f$ as follows.
	By \ref{setup: smoothstruct} the manifold structures of $C^\infty (M,X)$ and $C^\infty (M,Y)$ do not depend on the choice of local addition, whence we can use $\eta_X$ and $\eta_Y$ without loss of generality.
	We use the canonical charts $\varphi_f \colon U_f \rightarrow \mathcal{D}_f (M, TX) \cong \Gamma_c (f^*TX)$ and $\varphi_{\theta_*(f)} \colon U_{\theta_*(f)} \rightarrow \mathcal{D}_{\theta_*(f)} (M, TY) \cong \Gamma_c (\theta_* (f)^* TY)$.
	and  identify (cf.\ \cite[Proof of Proposition 4.8 and Remark 4.11 2.]{michor1980})
	\begin{equation}\label{eq: ident}
	\Gamma_c (f^*TX) = \Gamma_c (f^*(\mathcal{V} \oplus \mathcal{H})) \cong  \Gamma_c (f^* \mathcal{V}) \oplus \Gamma_c (f^* \mathcal{H}).
	\end{equation}
	Further, by the universal property of the pullback bundle $T\theta$ induces a bundle morphism $\Theta \colon f^*TX \rightarrow \theta_*(f)^* TY$ over the identity of $M$.
	Note that on the subbundle $f^* \mathcal{H}$ the morphism $\Theta$ restricts to a bundle isomorphism $B \colon f^* \mathcal{H} \rightarrow \theta_* (f)^* TY$ (as $B$ is a bundle morphism over the identity of $M$ which is fibre-wise an isomorphism).
	We denote by $I_f \colon \theta_* (f)^*TY \rightarrow f^* TX$ the bundle map obtained from composing the inverse $B^{-1}$ with the canonical inclusion $i^f_\mathcal{H}\colon f^*\mathcal{H} \rightarrow f^*TX$ (of smooth bundles).
	Further, by a slight abuse of notation we will also denote by $(I_f)_*$ and $(T\theta)_*$ pushforwards of the bundle mappings on the spaces of sections $ \Gamma_c (f^*TX)$ and $\Gamma_c (f^*TY)$.
	Note that these pushforwards are smooth and linear maps (as the vector space structure is given by the fibre-wise operations).
	
	Recall from \ref{setup: smoothstruct} that the the inverses $\varphi_f^{-1}$ and $\varphi_{\theta_* (f)}^{-1}$ are given by postcomposition with the local addition.
	Hence, we can combine \eqref{eq: diagsrlemma}, \eqref{eq: ident} and the definition of $I_f$ to obtain a commutative diagram (where restrictions to open sets are suppressed):
	\begin{equation}\label{eq: diagfinal} \begin{aligned}
	\begin{xy}
	\xymatrix{
		\Gamma_c \big(f^*(\mathcal{H} \cap \Omega_X)\big) \ar[r]^-{(i^f_\mathcal{H})_*} & \Gamma_c \big(f^*(\Omega_X)\big)  \ar[d]^{(T\theta)_*}\ar[r]^{\varphi_f^{-1}} &  C^\infty (M,X) \ar[d]^{\theta_*}  \\
		\Gamma_c \big(\theta_* (f)^* \Omega_Y\big)  \ar@2{-}[r] \ar[u]^{(B^{-1}_*)}&   \Gamma_c \big(\theta_* (f)^* \Omega_Y\big) \ar@<5pt>[u]^{(I_f)_*} \ar[r]^{\varphi_{\theta_* (f)}^{-1}}             &   C^\infty (M, Y)
	}
	\end{xy}\end{aligned}
	\end{equation}
	We conclude that on the canonical charts for $f$, the map $\theta$ satisfies
	$$\varphi_{\theta_* (f)} \circ \theta_* \circ \varphi_f^{-1} = (T\theta)_*|_{\Omega_X}$$
	for the continuous linear map $(T \theta)_*$.
	Vice versa, we deduce from \eqref{eq: diagfinal} that the map $(I_f)_*$ is the continuous linear right inverse of $(T\theta)_*$ turning the pair $(\varphi_f, \varphi_{\theta_* (f)})$ into submersion charts.
\end{proof}

\begin{rem}
	\begin{enumerate}
		\item  Note that the source manifold $M$ in the statement of Proposition \ref{staceyroberts} played no direct r\^{o}le in the proof.
		In particular, Lemma \ref{staceyroberts} stays valid if we replace $M$ by a manifold with boundary or with corners (e.g.\ a compact interval).
		We refer to \cite[Chapter 10]{michor1980} for the description of the manifold structure on $C^\infty (M,N)$ if $M$ is a manifold with corners.
		\item In general the pushforward $\theta_*$ of a surjective submersion $\theta$ will not be surjective. For example, consider a surjective submersion $\theta \colon X \rightarrow Y$ which does not define a fibration and let $Y$ be connected. Then curves to $Y$ will in general not lift globally (since $\theta$ does not admit a complete Ehresmann connection, see \cite{MR3542951}).
	\end{enumerate}
\end{rem}

The Stacey-Roberts Lemma now turns $\SG$ into a submanifold of $C^\infty_{fS} (G,G)$.

\begin{lem}\label{SG: sbmfd}
	Let $\cG = (G \toto M)$ be a Lie groupoid. Then $\SG$ is a closed and split submanifold of $C^\infty_{fS} (G,G)$.
\end{lem}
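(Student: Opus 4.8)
The plan is to realize $\SG$ as the preimage of a point under a submersion, and then invoke the standard fact that preimages of points under submersions (in the sense of Definition \ref{defn: subm}) are closed split submanifolds. Concretely, I would consider the pushforward
$$
\beta_* \colon C^\infty_{fS}(G,G) \longrightarrow C^\infty_{fS}(G,M), \qquad f \mapsto \beta \circ f,
$$
and observe that, by the very definition of $\SG$ in Definition \ref{defn: SG}, we have $\SG = \beta_*^{-1}(\{\alpha\})$. Since $\beta \colon G \to M$ is a surjective submersion between finite-dimensional paracompact manifolds (see \ref{2}), the Stacey-Roberts Lemma (Lemma \ref{staceyroberts}) applies and tells us that $\beta_*$ is a smooth submersion. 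Note that $\alpha$ genuinely lies in the image of $\beta_*$ — for instance $\beta_*(1 \circ \alpha) = \beta \circ 1 \circ \alpha = \alpha$ since $\beta \circ 1 = \id_M$ — so the preimage is non-empty.

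The key step is then to spell out why the preimage of a point under a submersion in the sense of Definition \ref{defn: subm} is a closed split submanifold, and to produce the submanifold charts explicitly from the submersion charts. Given $f \in \SG$, pick a submersion chart for $\beta_*$ around $f$: a chart $\Psi \colon U_\Psi \to V_\Psi \subseteq E$ of $C^\infty(G,G)$ with $f \in U_\Psi$ and a chart $\psi \colon U_\psi \to V_\psi \subseteq F$ of $C^\infty(G,M)$ with $\beta_*(U_\Psi) \subseteq U_\psi$ and $\psi \circ \beta_* \circ \Psi^{-1} = \pi|_{V_\Psi}$ for a continuous linear surjection $\pi \colon E \to F$ admitting a continuous linear right inverse $\sigma$. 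The existence of $\sigma$ means $E \cong \ker \pi \oplus \sigma(F)$ as a topological direct sum, so $\ker \pi$ is a complemented closed subspace. Translating by $\psi(\alpha)$ if necessary (i.e.\ composing $\psi$ with a translation so that $\psi(\alpha) = 0$, which is harmless since charts of $C^\infty(G,M)$ are obtained from local additions and can be so normalized, or simply by shrinking and translating inside the model space), one gets that $\Psi$ restricts to a chart of $C^\infty(G,G)$ sending $U_\Psi \cap \SG = U_\Psi \cap \beta_*^{-1}(\{\alpha\})$ onto $V_\Psi \cap \ker\pi$, an open subset of the complemented closed subspace $\ker \pi \subseteq E$. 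This is exactly a split submanifold chart for $\SG$ at $f$. Closedness follows because $\{\alpha\}$ is closed in $C^\infty_{fS}(G,M)$ (the fine very strong topology is Hausdorff, by \ref{1}) and $\beta_*$ is continuous, so $\SG = \beta_*^{-1}(\{\alpha\})$ is closed.

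The main obstacle, honestly, is bookkeeping rather than conceptual: one must be slightly careful that the point $\alpha$ can be placed at the origin of the model space $F$ of the target chart, so that $\pi|_{V_\Psi}^{-1}(\{0\}) = V_\Psi \cap \ker\pi$ really is the chart image of $U_\Psi \cap \SG$. With the canonical charts used in the proof of Lemma \ref{staceyroberts} (built from local additions via $\varphi_f^{-1} = \eta \circ (\cdot)$), the base point $\theta_*(f)$ always corresponds to the zero section, so after choosing $f \in \SG$ the element $\beta_*(f) = \alpha$ sits at $0$ in the chart $\varphi_{\alpha}$ automatically — no translation trick is even needed. Thus the submersion charts produced in the proof of Lemma \ref{staceyroberts} are already adapted, and one simply reads off that $\varphi_f$ maps $U_{\varphi_f} \cap \SG$ onto $V_{\varphi_f} \cap \ker (T\beta)_*$, with $\ker(T\beta)_*$ complemented via the right inverse $(I_f)_*$. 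I would close by remarking that this identifies the model space of $\SG$ near $f$ with $\Gamma_c(f^*\mathcal{V})$ — the compactly supported sections of the pullback of the vertical bundle $\mathcal{V} = \ker T\beta \subseteq TG$ — which is the natural candidate for the tangent space and will be reused when computing the Lie algebra in Theorem B.
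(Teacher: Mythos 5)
Your proposal is correct and follows exactly the same route as the paper: realise $\SG = \beta_*^{-1}(\{\alpha\})$ and apply the Stacey-Roberts Lemma, after which the paper simply cites \cite[Theorem C]{Glofun} for the fact that preimages of points under submersions are closed split submanifolds. Your explicit unpacking of that cited fact via the canonical submersion charts (and the identification of the local model with $\Gamma_c(f^*\mathcal{V})$) is accurate and consistent with how the charts are built in the proof of Lemma \ref{staceyroberts}.
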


\begin{proof}
	Note that $\SG$ is the preimage of the singleton $\{ \alpha \} \subseteq C^\infty (G,M)$ under the submersion $\beta_* \colon C^\infty (G,G) \rightarrow C^\infty (G,M)$.
	Thus the statement follows from the usual results on preimages of submanifolds under submersions (see e.g.\ \cite[Theorem C]{Glofun} for the statement in our infinite-dimensional setting).
\end{proof}

The submanifold structure enables us to study differentiability conditions of the monoid operation and we begin now with a few preparations.

\begin{lem}\label{con3}
Let $\cG$ be a Lie groupoid then the following mappings are smooth:
\begin{enumerate}
 \item $\Gamma \colon \SG\rightarrow C^{\infty}(G,\Gtwo)$ with $\Gamma(f)(x)\coloneq \big(x,f(x)\big),$ for $x\in G$.
 \item $R\colon\SG\rightarrow C^{\infty}(G,G)$ by $R(f)(x)\coloneq R_f(x) \coloneq xf(x), x \in G$. \label{con3: 2}
\end{enumerate}
\end{lem}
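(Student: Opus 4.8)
The plan is to reduce both claims to the smoothness of the exponential law for manifolds of mappings together with the submanifold structure on $\SG$ established in Lemma \ref{SG: sbmfd}. The guiding principle is that a map into a manifold of mappings $C^\infty(G,N)$ is smooth precisely when its adjoint $\SG \times G \to N$ is smooth (as a map on the product manifold, where $\SG$ is the infinite-dimensional manifold and $G$ the finite-dimensional one); this is the content of the exponential law recalled in Appendix \ref{Appendix: MFD} / Appendix \ref{app: topo}, and it applies here because $G$ is finite-dimensional (hence locally compact), so no properness hypothesis is needed for the adjunction itself.

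For part (2), I would first observe that the assignment $\SG \times G \to G$, $(f,x) \mapsto xf(x)$, factors as $\SG \times G \to G^{(2)} \to G$, where the first map is $(f,x)\mapsto(x,f(x))$ and the second is the partial multiplication $m$ of $\cG$, which is smooth by definition of a Lie groupoid. So it suffices to show $(f,x)\mapsto(x,f(x))$ is smooth into $G^{(2)}$; since $G^{(2)}$ is an embedded submanifold of $G\times G$, it is enough to check smoothness into $G\times G$, and this splits into the two components $(f,x)\mapsto x$ (projection, smooth) and $(f,x)\mapsto f(x)$. The latter is the composition of the inclusion $\SG \hookrightarrow C^\infty(G,G)$ (smooth, as $\SG$ is a submanifold) with the evaluation map $\ev\colon C^\infty(G,G)\times G \to G$, which is smooth because $G$ is finite-dimensional. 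Then the exponential law upgrades smoothness of the adjoint $\SG\times G \to G$ to smoothness of $R\colon \SG \to C^\infty(G,G)$, giving part (2).

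For part (1), the argument is essentially the same: the adjoint of $\Gamma$ is $\SG\times G \to \Gtwo$, $(f,x)\mapsto(x,f(x))$, which I just showed is smooth as a map into $G\times G$; but $\Gtwo$ as a topological groupoid carries the subspace topology from $G\times G\times G\times G$, and as a \emph{smooth} target for the manifold of mappings $C^\infty(G,\Gtwo)$ we only need to make sense of $C^\infty(G,\Gtwo)$ — here one should note that the relevant object is really $C^\infty(G,G^{(2)})$ with $G^{(2)}\subseteq G\times G$ an embedded submanifold (the object and composable-pair structure of $\Gtwo$ in Remark \ref{G^2} lives on the same underlying manifold $G^{(2)}$). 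Thus $\Gamma$ is the corestriction to the submanifold $G^{(2)}$ of the map $G \ni x \mapsto (x,f(x)) \in G\times G$, whose adjoint is smooth by the evaluation-map argument above, so another application of the exponential law finishes part (1). One small point to address explicitly: corestriction of a smooth map to an embedded (split) submanifold is again smooth, so passing from $C^\infty(G,G\times G)$ to $C^\infty(G,G^{(2)})$ is harmless.

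The main obstacle — or rather the only place where care is genuinely required — is bookkeeping about which topology/manifold structure is in play: one must use that $\SG$ is a \emph{submanifold} of $C^\infty_{fS}(G,G)$ (Lemma \ref{SG: sbmfd}) so that the inclusion is smooth, and one must invoke the exponential law in the precise form valid for the fine very strong topology with a finite-dimensional (locally compact) source. Everything else — smoothness of $m$, of projections, of the evaluation map, and stability of smoothness under corestriction to embedded submanifolds — is standard and can be cited from Appendix \ref{app: topo} and the references therein. No deep estimate is needed; the content is the correct invocation of the manifold-of-mappings formalism.
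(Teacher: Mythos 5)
There is a genuine gap, and it sits exactly at the step you flag as the engine of the whole argument: the ``exponential law'' in the direction you need is \emph{false} for the fine very strong topology when the source manifold $G$ is non-compact. What is true (and is all that Appendix \ref{app: topo} provides) is that $\ev \colon C^\infty_{fS}(G,N)\times G \to N$ is smooth, so smoothness of a map into $C^\infty_{fS}(G,N)$ implies smoothness of its adjoint. The converse implication fails: already $h\colon \R\times\R\to\R$, $h(m,n)=mn$, is smooth, but $h^\vee\colon \R\to C^\infty_{fS}(\R,\R)$ is not even continuous, since $h^\vee(m)$ and $h^\vee(0)$ differ on all of $\R\setminus\{0\}$ for $m\neq 0$, while the $\sim$-equivalence classes (maps differing only on a relatively compact set) are by definition open in $C^\infty_{fS}(\R,\R)$. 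More generally, smoothness of the adjoint gives control only in the compact-open $C^\infty$ sense, whereas the fine very strong topology demands uniform control over locally finite families of compact sets; this is precisely why the paper's own use of the exponential law (inside Lemma \ref{lem: aux:idim}, citing \cite[Theorem A]{alas2012}) is preceded by restricting to maps supported in a fixed compact set, where the subspace topology collapses to the compact-open $C^\infty$ topology. Since $G$ is not assumed compact anywhere in the paper, your reduction of both parts to smoothness of $(f,x)\mapsto f(x)$ does not establish the claim. (Incidentally, in this particular situation $R$ and $\Gamma$ do respect the $\sim$-classes, but that is only a necessary condition and does not rescue the argument.)

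The paper's proof avoids adjoints altogether and works directly with structural theorems about the $\cF\cD$-manifold $C^\infty_{fS}$: part (1) uses that $(f,g)\mapsto (f,g)\circ\Delta$ is a \emph{diffeomorphism} $C^\infty(G,G)\times C^\infty(G,G)\to C^\infty(G,G\times G)$ (\cite[Proposition 10.5]{michor1980}) together with the fact that $C^\infty(G,\Gtwo)$ is a split closed submanifold of $C^\infty(G,G\times G)$ (\cite[Proposition 10.8]{michor1980}), and part (2) is then just $R=m_*\circ\Gamma$ with $m_*$ smooth by \ref{setup: pf}. Your observations about corestriction to the embedded submanifold $G^{(2)}$ and the factorisation through $m$ are correct and reappear in that argument, but the smoothness of the map $\SG\to C^\infty(G,G\times G)$, $f\mapsto (\id_G,f)\circ\Delta$, must be obtained from the cited diffeomorphism (or an equivalent statement proved in the fine very strong setting), not from an exponential law.
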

\begin{proof}
\begin{enumerate}
 \item By definition of $\SG$ the map $\Gamma$ makes sense.
 We let now $\Delta \colon G \rightarrow G \times G$ be the diagonal map. By \cite[Proposition 10.5]{michor1980}, this map induces a diffeomorphism
 $$C^\infty (G,G) \times C^\infty (G,G) \rightarrow C^\infty (G,G \times G),\quad (f,g) \mapsto (f,g)\circ \Delta.$$
 Restricting the diffeomorphism to the closed submanifold $C^\infty (G,G) \times \SG \subseteq C^\infty (G,G) \times C^\infty (G,G)$, it factors to a map $\delta \colon C^\infty (G,G) \times \SG \rightarrow C^\infty (G,\Gtwo)$. Since $G^{(2)} \subseteq G\times G$ is an embedded submanifold, \cite[Proposition 10.8]{michor1980} implies that $C^\infty (G,\Gtwo)$ is a closed and split submanifold of $C^\infty (G,G\times G)$, whence $\delta$ is smooth.
 Then $\Gamma (f) = \delta (\id_G , f)$ is smooth as $\SG \rightarrow C^\infty (G,G) \times \SG , f \mapsto (\id_G,f)$ is smooth.
 \item Since $R=m_* \circ \Gamma$ where $m_*$ is the pushforward of the (smooth) multiplication $m\colon \Gtwo\rightarrow G$ of $\cG$. Thus \ref{setup: pf} shows that $R$ is smooth.\qedhere
\end{enumerate}
\end{proof}

\begin{setup}\label{setup: RISO}
 Let $\cG=(G\toto M)$ be a Lie groupoid then we define
 $$R_{\SG} \coloneq R(\SG) = \big\{ R_f \mid f\in\SG \big\} \subseteq C^\infty(G,G).$$
As \cite[Lemma 1]{Amiri2017} shows, $R$ satisfies $R(f\star g) = R(g) \circ R(f)$ for all $f,g \in \SG$ and is injective. In the following, we denote by \textquotedblleft $\circ_\op$\textquotedblright\, composition in opposite order, i.e.\ $f\circ g = g\circ_\op f$.
Then $(R_{\SG}, \circ_\op)$ is a submonoid of $(C^\infty (G,G), \circ_\op)$ and with respect to this structure $R$ induces a monoid monomorphism.
\end{setup}

\begin{rem}\label{rem: gpbd}
Note that if $\cG$ is a group bundle then $\SG=R_{\SG}$ and if $\SG\cap R_{\SG}\neq\emptyset$ then $G$ is a group bundle.
Therefore if $\cG$ is not a group bundle, then $\SG$ and $R_{\SG}$ are disjoint subsets of $C^{\infty}(G,G)$.
\end{rem}

\begin{setup}[The isomorphism $R \colon \SG \rightarrow R_{\SG}$] \label{setup: mon:iso}
By \ref{setup: RISO} the monoids $(R_{\SG}, \circ_{\op})$ and $(\SG, \star)$ are isomorphic, where the inverse of the isomorphism $R$ is given by $R^{-1} (\psi) (x) \coloneq x^{-1} \psi (x)$ for $\psi \in R_{\SG}$ and $x \in G$.
We further note that $R(\alpha) = \id_{G}$ and we have the identity
$$R_{\SG} = \big\{f \in C^\infty (G,G) \mid \beta \circ f = \beta\big\} = \beta_*^{-1} (\{\beta\}). $$
As a consequence of the Stacey-Roberts Lemma \ref{staceyroberts} also $R_{\SG}$ is a closed and split submanifold of $C^\infty (G,G)$.
Now Lemma \ref{con3} implies that $R$ induces a diffeomorphism between the submanifolds $\SG$ and $R_{\SG}$.
To see this note that $R^{-1} = m_* \circ (\iota ,\id_{\SG}) \circ \Delta$ (where the notation is as in Lemma \ref{con3}), whence also $R^{-1}$ is smooth.

Thus the monoids $(\SG,\star)$ and $(R_{\SG},\circ_{\op})$ are isomorphic via the diffeomorphism $R|^{R_{\SG}}$.
In particular, $(R_{\SG},\circ_{\op})$ is a differentiable monoid if and only if $(\SG,\star)$ is a differentiable monoid.
This will enable us to investigate the group of invertible elements of $\SG$ in Section \ref{sect: unit}.
\end{setup}

\subsection*{Differentiable monoids for source-proper groupoids}\addcontentsline{toc}{subsection}{Differentiable monoids for source-proper groupoids}
To establish smoothness of the monoid operation "$\star$", we would like to use smoothness of the composition $\circ \colon C^\infty (Y,Z) \times C^\infty (X,Y) \rightarrow C^\infty (X,Z), (f,g) \mapsto f \circ g$. However, this mapping will in general not be smooth (not even continuous!) if $X$ is non-compact. Following \ref{setup: comp}, we can obtain a smooth map if we restrict to the subset of smooth proper maps $\Prop (X,Y)$. It turns out that the $\SG$ will be contained in this set if the Lie groupoid satisfies the following condition.

\begin{setup}
	The Lie groupoid $\cG = (G \toto M)$ is \emph{$\alpha$-proper} (or \emph{source-proper}) if the source map is a proper map, i.e.\ $\alpha^{-1} (K)$ is compact for each $K \subseteq M$ compact (cf.\ \cite[\S 10.3 Proposition 7]{MR1726779}).
	Note that this entails that $\beta$ is a proper map, since $\beta = \alpha \circ \iota$ and $\iota$ is a diffeomorphism.
\end{setup}

	The concept of $\alpha$-proper groupoids appears for example in the integration of Poisson manifolds of compact type, see \cite{1603.00064v1}.
	
\begin{ex}
	Let $G$ be a Lie group acting smoothly on $M$. We denote by $G \ltimes M$ the corresponding action Lie groupoid.
	Then $G \ltimes M$ is $\alpha$-proper if and only if $G$ is compact.
	Note that $\alpha$-properness is a stronger condition than being a proper Lie groupoid (which in the $G \ltimes M$ example would only force the group action to be proper).
\end{ex}

\begin{ex}
Recall from \cite{MR3638284,MR2012261} that every paracompact, smooth and effective orbifold can be represented by a so called atlas groupoid. To this end, one constructs from an atlas a proper \'{e}tale Lie groupoids (see \cite[Proposition 5.29]{MR2012261}). Following this procedure for a locally finite orbifold atlas, one sees that the atlas groupoid will even be source proper. Note that an atlas which is not locally finite yields a non source proper Lie groupoid. Since all atlas groupoids of a fixed orbifold are Morita equivalent, source properness is not stable under Morita equivalence.  
\end{ex}

\begin{rem}
	In general, it is not enough to require that the $\alpha$-fibres, $\alpha^{-1} (x) , x \in M$ are compact to obtain an $\alpha$-proper groupoid.
	However, it is sufficient to require that the $\alpha$-fibres of $\cG$ are compact and connected, to obtain an $\alpha$-proper groupoid.
	This can be seen as follows: Consider a quotient map $f \colon X \rightarrow Y$ between Hausdorff locally compact spaces such that every component of $f^{-1} (x)$ is compact. By \cite[p. 103]{MR526764} Then $f = l \circ \varphi$ uniquely factors into a proper map $\varphi \colon X \rightarrow M$ (using that closed maps with $f^{-1} (x)$ compact for all $x$ are proper by \cite[\S 10.2 Theorem 1]{MR1726779}) onto a quotient space $M$ and a map $l \colon M \rightarrow Y$.
	If we assume that the fibres $f^{-1} (x)$ are connected then the quotient space $M$ coincides with $Y$ and thus $\varphi$ coincides with $f$.
\end{rem}

\begin{lem}\label{proper}
For an $\alpha$-proper Lie groupoid $\cG$ one has $\SG \subseteq \Prop (G,G)$ and $R(\SG) \subseteq \Prop (G,G)$.
\end{lem}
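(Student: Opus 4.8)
The plan is to deduce both inclusions from a single elementary observation about proper maps, applied once to $f$ and once to $R_f = R(f)$. The observation is: if $G$ and $M$ are Hausdorff spaces, $p,q\colon G\to M$ are continuous proper maps, and $h\colon G\to G$ is continuous with $q\circ h = p$, then $h$ is proper. To see this, take $K\subseteq G$ compact; since $G$ is Hausdorff, $K$ is closed, hence $h^{-1}(K)$ is closed in $G$. Moreover
\[
 h^{-1}(K)\ \subseteq\ h^{-1}\big(q^{-1}(q(K))\big)\ =\ (q\circ h)^{-1}(q(K))\ =\ p^{-1}(q(K)),
\]
and $p^{-1}(q(K))$ is compact because $q(K)$ is compact and $p$ is proper. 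A closed subset of a compact set is compact, so $h^{-1}(K)$ is compact.

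Next I would record that the structure maps are proper: since $\cG$ is $\alpha$-proper, $\alpha$ is proper by definition, and $\beta = \alpha\circ\iota$ is then proper because $\iota$ is a diffeomorphism (as already noted in the definition of $\alpha$-properness). Now fix $f\in\SG$. By definition of $\SG$ we have $\beta\circ f = \alpha$, so applying the observation with $p=\alpha$, $q=\beta$, $h=f$ shows that $f$ is proper; being smooth by hypothesis, $f\in\Prop(G,G)$. For the image, recall from \ref{setup: mon:iso} that $R_{\SG} = \beta_*^{-1}(\{\beta\})$, i.e.\ $\beta\circ R_f = \beta$ for every $f\in\SG$ (equivalently, $x$ and $f(x)$ are composable and $\beta(xf(x)) = \beta(x)$). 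Applying the observation with $p=q=\beta$, $h=R_f$ gives that $R_f$ is proper, whence $R(\SG)\subseteq\Prop(G,G)$.

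I do not expect a genuine obstacle here: the argument is purely point-set topology, and the only points needing care are the standing Hausdorff hypothesis (used to pass from ``compact'' to ``closed'' for preimages) and the identity $\beta\circ R_f = \beta$, both of which are already in place in the excerpt.
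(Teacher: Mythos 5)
Your proof is correct and follows essentially the same route as the paper: both rest on the containments $f^{-1}(K)\subseteq\alpha^{-1}(\beta(K))$ and $R_f^{-1}(K)\subseteq\beta^{-1}(\beta(K))$ together with closedness of the preimage inside a compact set; you have merely packaged this as a small general lemma about $q\circ h=p$ with $p$ proper. (Minor remark: your observation does not actually need $q$ to be proper, only continuous, since all you use is that $q(K)$ is compact.)
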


\begin{proof}
Let us first fix $f \in \SG$ and prove that $f$ is a proper map.
It suffices to prove that $f^{-1} (K)$ is compact for each compact $K \subseteq G$.
Now by definition of $\SG$ we have $f^{-1} (K) \subseteq \alpha^{-1} \big(\beta (K)\big)$.
Hence by $\alpha$-properness, $f^{-1}(K)$ is compact as a closed subset of the compact set $\alpha^{-1} \big(\beta (K)\big)$.

Similarly we proceed for $R(f)$ as \ref{setup: mon:iso} implies that $R_f^{-1}(K)\subseteq \beta^{-1}\big(\beta(K)\big)$. Now $\cG$ is $\alpha$-proper, whence $\beta$ is proper (as $\beta = \alpha \circ \iota$) and therefore $R_f^{-1}(K)$ as a closed subset of a compact set $\beta^{-1}\big(\beta(K)\big)$ which again is compact.
\end{proof}

We can now prove the main theorem of this section which establishes continuity and differentiability of the monoid $\SG$ for $\alpha$-proper Lie groupoids.

\begin{thm}\label{thm: SG:diffgpd}
	Let $\cG = (G \toto M)$ be an $\alpha$-proper Lie groupoid. Then $(\SG , \star)$ with the manifold structure from Lemma \ref{SG: sbmfd} is a differentiable monoid.
\end{thm}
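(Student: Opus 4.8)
The plan is to transport the question along the diffeomorphism $R$ of \ref{setup: mon:iso} and thereby reduce it to the smoothness of a composition map. Recall that $R\colon \SG \to R_\SG$ is a diffeomorphism onto a closed split submanifold of $C^\infty_{fS}(G,G)$ satisfying $R(f\star g) = R(g)\circ R(f)$ for all $f,g\in\SG$. Hence $(\SG,\star)$ is a differentiable monoid as soon as $(R_\SG,\circ_\op)$ is one, i.e.\ as soon as the map
\[
  \mu \colon R_\SG \times R_\SG \to R_\SG, \qquad (\psi,\varphi)\mapsto \varphi\circ\psi
\]
is smooth; indeed $f\star g = R^{-1}\big(\mu(R(f),R(g))\big)$, so that $\star = R^{-1}\circ\mu\circ(R\times R)$ and smoothness of $\mu$ (together with that of $R$ and $R^{-1}$) yields smoothness of $\star$.

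To prove that $\mu$ is smooth I would first invoke Lemma \ref{proper}: since $\cG$ is $\alpha$-proper, $R_\SG \subseteq \Prop(G,G)$. As $\Prop(G,G)$ is an open submanifold of $C^\infty_{fS}(G,G)$ and $R_\SG$ is a closed split submanifold of $C^\infty_{fS}(G,G)$ by \ref{setup: mon:iso}, we may regard $R_\SG$ as a submanifold of $\Prop(G,G)$, with both inclusions $R_\SG \hookrightarrow C^\infty(G,G)$ and $R_\SG \hookrightarrow \Prop(G,G)$ smooth. By \ref{setup: comp}, the composition map
\[
  \operatorname{Comp} \colon C^\infty(G,G)\times\Prop(G,G) \to C^\infty(G,G), \qquad (\varphi,\psi)\mapsto\varphi\circ\psi
\]
is smooth. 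Precomposing $\operatorname{Comp}$ with the smooth map $R_\SG\times R_\SG \to C^\infty(G,G)\times\Prop(G,G)$, $(\psi,\varphi)\mapsto(\varphi,\psi)$ (a flip followed by the two inclusions above), we obtain a smooth map $R_\SG\times R_\SG \to C^\infty(G,G)$, $(\psi,\varphi)\mapsto\varphi\circ\psi$.

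It remains to corestrict this map to $R_\SG$. By \ref{setup: RISO}, $(R_\SG,\circ_\op)$ is a submonoid of $(C^\infty(G,G),\circ_\op)$, so $\varphi\circ\psi\in R_\SG$ whenever $\psi,\varphi\in R_\SG$; thus the image of the map just constructed lies in $R_\SG$. Since $R_\SG$ is a \emph{split} submanifold of $C^\infty(G,G)$, a smooth map into $C^\infty(G,G)$ whose values all lie in $R_\SG$ corestricts to a smooth map into $R_\SG$ (locally one post-composes a chart of $C^\infty(G,G)$ with the continuous linear projection onto the complemented model space of $R_\SG$). Hence $\mu$ is smooth, and therefore so is $\star$; together with the first part of Theorem A (that $\SG$ then consists of proper maps, already covered by Lemma \ref{proper}) this gives the claim.

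The one genuinely delicate ingredient is the $\alpha$-properness hypothesis: composition of smooth self-maps of $G$ is not even continuous on $C^\infty(G,G)\times C^\infty(G,G)$ for non-compact $G$, and it is precisely the inclusion $R_\SG\subseteq\Prop(G,G)$ supplied by Lemma \ref{proper} that lets us feed the pair $(R(g),R(f))$ into the smooth composition map of \ref{setup: comp}. The remaining steps are bookkeeping with the already-established diffeomorphism $R$ and with submanifold corestrictions; the only point requiring a line of justification beyond the ``formal'' is that corestriction to the split submanifold $R_\SG$ preserves smoothness, which is where the word ``split'' in Lemma \ref{SG: sbmfd} is actually used.
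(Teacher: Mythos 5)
Your proof is correct; it differs from the paper's in how the multiplication is decomposed, though both arguments ultimately rest on the same two facts (Lemma \ref{proper} giving $R(\SG)\subseteq\Prop(G,G)$, and the smoothness of $\mathrm{Comp}$ on $C^\infty(G,G)\times\Prop(G,G)$ from \ref{setup: comp}). You conjugate by the diffeomorphism $R$ and reduce everything to the smoothness of $\circ_\op$ on $R_{\SG}\times R_{\SG}$, i.e.\ you exploit the remark at the end of \ref{setup: mon:iso} that $(\SG,\star)$ is differentiable iff $(R_{\SG},\circ_\op)$ is. The paper instead works directly with $\star$ on $\SG$, writing $f\star g=m_*\circ\bigl(f,\mathrm{Comp}(g,R(f))\bigr)$ and invoking the pairing diffeomorphism of Lemma \ref{con3} together with smoothness of $m_*$; it never needs $R^{-1}$ in the proof of the theorem itself. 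Your route is slightly cleaner conceptually (the monoid structure on $R_{\SG}$ is literally composition, so no pointwise formula has to be unwound), at the mild cost of importing the full diffeomorphism statement of \ref{setup: mon:iso} and of having to justify the corestriction to $R_{\SG}$ rather than to $\SG$ --- which you do correctly, and which is the same split-submanifold argument the paper cites for its own final step. One small point worth making explicit if you write this up: the flip in your map $R_{\SG}\times R_{\SG}\to C^\infty(G,G)\times\Prop(G,G)$ is exactly what places the \emph{left} factor of $\circ_\op$ (which must be proper) into the $\Prop(G,G)$ slot of $\mathrm{Comp}$; you have this right, but it is the one place where an orientation error would silently break the argument.
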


\begin{proof}
	Recall from Definition \ref{defn: SG} that the monoid multiplication can be written as
	\begin{displaymath}
	f \star g  = m_* \circ \Big(f , \mathrm{Comp} \big(g, R(f)\big)\Big) \quad f,g \in \SG ,
	\end{displaymath}
	where $\mathrm{Comp} (f,g) = f\circ g$ is the joint composition map.
	By Lemma \ref{con3} $R$ is a smooth map which maps $\SG$ into the open subset $\Prop (G,G)$ by Lemma \ref{proper} and $\alpha$-properness of $\cG$.
	Since $\mathrm{Comp} \colon C^\infty (G,G) \times \Prop (G,G) \rightarrow C^\infty (G,G)$ is smooth by \ref{setup: comp} the monoid operation $\star$ is smooth as a map into the (closed) submanifold $\SG$ (cf.\ \cite[p.\ 10]{Glofun}).
\end{proof}

\begin{rem}
We established continuity and differentiability of the multiplication in $\SG$ by exploiting the continuity of the joint composition map.
If the Lie groupoid $\cG$ is not $\alpha$-proper, $\SG$ contains mappings which are not proper, e.g.\ $\alpha$, and the joint composition is in general discontinuous outside of $\Prop (G,G)$ (see e.g.\ \cite[Example 2.1]{HS2016}.
Thus the proof of Theorem \ref{thm: SG:diffgpd} breaks down and we do not expect that $\SG$ will in general be a topological monoid.
Note however, that the subgroup of units in $\SG$ (studied in Section \ref{sect: unit} below) will always become a Lie group even if $\cG$ is not $\alpha$-proper.
\end{rem}

\begin{rem}
Analogous results to the ones established in this section can be established for the coarser very strong topology. One can prove that $\SG$ is a topological monoid in the very strong topology if $\cG$ is $\alpha$-proper. As the very strong topology does not turn $C^\infty (G,G)$ into a manifold, there is no differentiable structure on $\SG$.
\end{rem}

\section{The Lie group of invertible elements} \label{sect: unit}

In this section we consider the group of invertible elements of the monoid $(\SG,\star)$.
It will turn out that this group is always a Lie group (endowed with the subspace topology of the (fine) very strong topology).
Let us first derive an alternative description of the group of invertible elements in $\SG$.

\begin{defn}
 For a Lie groupoid $\cG$ define the group
 \begin{displaymath}
  \big(\SG (\alpha) \coloneq \big\{f \in \SG \mid R(f) \in \Diff (G)\big\} , \star\big).
 \end{displaymath}
 Further, we set $R_{\SG (\alpha)} \coloneq R\big(\SG (\alpha)\big)$.
\end{defn}

\begin{setup}
 In \ref{setup: mon:iso} we have seen that $R$ is a monoid isomorphism between $(\SG,\star)$ and
 $$\big(R_{\SG} = \big\{f \in C^\infty (G,G) \mid \beta \circ f = \beta \big\}, \circ\big).$$ As the units in $R_{\SG}$ are clearly given by $R_{\SG (\alpha)} = R_{\SG} \cap \Diff (G)$, we see that $\big(\SG (\alpha), \star\big)$ indeed is the group of units of the monoid $\SG$.

 Now the group $\Diff(G)$ is open in $C^{\infty}_{fS}(G,G)$ and Lemma \ref{con3} \ref{con3: 2} implies that $R$ is smooth, $\SG(\alpha) = R^{-1}\big(\Diff(G)\big)$ is an open submanifold of $\SG$.
\end{setup}

The fact that $R_{\SG(\alpha)}\subseteq \Diff(G)\subseteq \Prop(G,G)$ will enable us to show that for an arbitrary Lie groupoid $\cG$ the group of units in $\SG$ is an infinite-dimensional Lie group. As $\SG (\alpha)$ is isomorphic as a group and a submanifold to $R_{\SG (\alpha)}$ we will first establish a Lie group structure on  $R_{\SG (\alpha)}$. To this end recall that for a finite-dimensional manifold $G$, $\Diff (G)$ is an open subset of $C^\infty_{fS} (G,G)$.
By \cite[Theorem 11.11]{michor1980} this structure turns $(\Diff (G), \circ)$ (or equivalently $(\Diff (G),\circ_\op)$) into an infinite-dimensional Lie group whose Lie algebra is given by the compactly supported vector fields $\mathfrak{X}_c (G)$ whose Lie bracket is the negative of the bracket of vector fields (cf.\ \cite{MR3328452}).

\begin{prop}\label{prop: RSG:Lie}
  Let $\cG = (G \toto M)$ be a Lie groupoid. Then $R_{\SG (\alpha)}$ is a closed Lie subgroup of $(\Diff (G),\circ)$.
\end{prop}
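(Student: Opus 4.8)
The plan is to realise $R_{\SG(\alpha)}$ as the intersection of $\Diff(G)$ with the closed split submanifold $R_{\SG} = \beta_*^{-1}(\{\beta\})$ of $C^\infty_{fS}(G,G)$, and to exploit that this intersection is both a subgroup (already noted in \ref{setup: RISO}) and a submanifold in a compatible way. First I would recall from \ref{setup: mon:iso} that $R_{\SG} = \{\varphi\in C^\infty(G,G)\mid \beta\circ\varphi=\beta\}$ is, by the Stacey--Roberts Lemma \ref{staceyroberts} applied to the surjective submersion $\beta\colon G\to M$, a closed and split submanifold of $C^\infty_{fS}(G,G)$, being the preimage of the singleton $\{\beta\}$ under the submersion $\beta_*$. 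Since $\Diff(G)$ is open in $C^\infty_{fS}(G,G)$, the set $R_{\SG(\alpha)} = R_{\SG}\cap\Diff(G)$ is an open submanifold of the submanifold $R_{\SG}$; in particular it is a split submanifold of $C^\infty_{fS}(G,G)$ and inherits a manifold structure for which the inclusion into $\Diff(G)$ is a (smooth) embedding whose image is split.

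Next I would check the algebraic point: $R_{\SG(\alpha)}$ is a subgroup of $(\Diff(G),\circ)$. Closure under composition is immediate from $\beta\circ(\psi\circ\varphi) = (\beta\circ\psi)\circ\varphi = \beta\circ\varphi = \beta$ for $\varphi,\psi\in R_{\SG}$; it contains $\id_G = R(\alpha)$; and if $\varphi\in R_{\SG(\alpha)}$ then from $\beta\circ\varphi = \beta$ we get $\beta = \beta\circ\varphi^{-1}$, so $\varphi^{-1}\in R_{\SG(\alpha)}$. (This also follows from \ref{setup: RISO}, where $R(f\star g)=R(g)\circ R(f)$ and injectivity of $R$ exhibit $R_{\SG}$ as a submonoid whose units are exactly $R_{\SG}\cap\Diff(G)$.) Then I would argue that the group operations are smooth for the submanifold structure: multiplication $\circ$ and inversion on $\Diff(G)$ are smooth (the Lie group structure of \cite[Theorem 11.11]{michor1980}), they preserve $R_{\SG(\alpha)}$, and since $R_{\SG(\alpha)}$ is a submanifold, these restrictions are smooth as maps into $R_{\SG(\alpha)}$ by the universal property of submanifolds (a smooth map into the ambient manifold with image in a split submanifold is smooth into the submanifold; cf.\ \cite[p.\ 10]{Glofun}). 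Hence $R_{\SG(\alpha)}$ is a Lie group and, being a submanifold, a Lie subgroup of $\Diff(G)$.

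It remains to see that $R_{\SG(\alpha)}$ is \emph{closed} in $\Diff(G)$ (not merely in $C^\infty_{fS}(G,G)$, which is the easy assertion, since $R_{\SG}$ is closed there and $R_{\SG(\alpha)} = R_{\SG}\cap\Diff(G)$). In fact closedness in $\Diff(G)$ is automatic: if $(\varphi_i)$ is a net in $R_{\SG(\alpha)}$ converging in $\Diff(G)\subseteq C^\infty_{fS}(G,G)$ to some $\varphi\in\Diff(G)$, then since $\beta_*$ is continuous, $\beta = \beta_*(\varphi_i)\to\beta_*(\varphi) = \beta\circ\varphi$, so $\beta\circ\varphi = \beta$ and $\varphi\in R_{\SG}\cap\Diff(G) = R_{\SG(\alpha)}$. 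I expect the only genuinely substantive input to be the Stacey--Roberts Lemma, which underwrites the split submanifold structure on $R_{\SG}$; everything else is a matter of assembling the subgroup property with the standard fact that an open (here: the trace of an open set) subset of a split submanifold which is stable under smooth group operations is a Lie subgroup. One should take a little care that "submanifold" throughout means \emph{split} submanifold, so that the charts of $R_{\SG(\alpha)}$ are genuine submanifold charts of $C^\infty_{fS}(G,G)$ and the smoothness-into-the-submanifold argument applies; this is guaranteed because the Stacey--Roberts Lemma produces submersion charts, whose fibres are complemented.
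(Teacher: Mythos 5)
Your proposal is correct and follows essentially the same route as the paper: both obtain the submanifold structure from the Stacey--Roberts Lemma applied to $\beta_*$ (the paper restricts $\beta_*$ to the open set $\Diff(G)$ and takes the preimage of $\{\beta\}$ there, which it notes coincides with your description as $R_{\SG}\cap\Diff(G)$), and both then verify the subgroup property by the same computation with $\beta\circ\varphi^{-1}$. Your additional remarks on closedness via continuity of $\beta_*$ and on smoothness of the restricted group operations are correct and merely make explicit what the paper leaves implicit.
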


\begin{proof}
	Being a submersion is a local condition, whence $\beta_*|_{\Diff (G)}$ is a submersion as a consequence of the Stacey-Roberts Lemma \ref{staceyroberts}.
	Clearly $R_{\SG(\alpha)} = R_{\SG} \cap \Diff (G) = (\beta_*|_{\Diff (G)})^{-1} (\{\beta \})$ is a closed submanifold of $\Diff (G)$ and this structure coincides with the manifold structure obtained from $\Diff (G)\cap R_{\SG}$.
	Thus $R_{\SG (\alpha)}$ is a closed submanifold of $\Diff (G)$, whence it suffices to check that it is a subgroup of $(\Diff(G), \circ)$:
	Let $f,g \in R_{\SG (\alpha}$, then clearly $\beta \circ (f \circ g) = \beta$. Moreover, for the inverse $f^{-1}$ of $f$ in $\Diff (G)$ we have $ \beta \circ f^{-1} = (\beta \circ f) \circ f^{-1} = \beta$ and thus $f^{-1} \in R_{\SG (\alpha)}$ if and only if $f \in R_{\SG (\alpha)}$. Summing up $R_{\SG (\alpha)}$ is a closed Lie subgroup of $(\Diff (G), \circ)$.\footnote{Note that in contrast to the finite dimensional case, closed subgroups of infinite-dimensional Lie groups need not be Lie groups (cf.\ \cite[Remark IV.3.17]{neeb2006} for an example), whence it was essential to prove that $R_{\SG (\alpha)}$ is a closed submanifold of $\Diff(G)$.}
\end{proof}

To identify the Lie algebra of $R_{\SG (\alpha)}$ (as a subalgebra of $\Lf ((\Diff (G),\circ)) = \mathfrak{X}_c (G)$) we need some preparations.

\begin{setup}
 Consider the subset $T^\beta G = \bigcup_{g \in G} T_g \beta^{-1} \beta (g)$ of $TG$.
 Note that for all $v \in T_g^\beta G$ the definition implies $T \beta(v) = 0_{\beta (g)} \in T_\beta(g) M$, i.e.\ fibre-wise we have $T_g^\beta G =
\mathrm{ker} T g \beta$. Since $\beta$ is a submersion, the same is true for $T \beta$. Computing in submersion charts,
the kernel of $T g \beta$ is a direct summand of the model space of $T G$. Furthermore, the submersion
charts of $T \beta$ yield submanifold charts for $T^\beta G$ whence $T^\beta G$ becomes a split submanifold
of $T G$. Restricting the projection of $T G$, we thus obtain a subbundle $\pi_\beta \colon T^\beta G \rightarrow G$ of the
tangent bundle $T G$.
\end{setup}

\begin{prop}\label{prop: RSG:LA}
 The Lie algebra of $(R_{\SG (\alpha)}, \circ)$ is given by the Lie subalgebra
 $$\mathfrak{g}_\beta \coloneq \big\{X \in \mathfrak{X}_c (G) \mid X(G) \subseteq T^\beta G\big\}$$
 of $\big(\mathfrak{X}_c (G), \LB\big)$, where $\LB$ is the negative of the usual bracket of vector fields
\end{prop}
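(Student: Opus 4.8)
The plan is to identify $\Lf(R_{\SG (\alpha)})$ with the kernel of the differential at $\id_G$ of the submersion which cuts out $R_{\SG (\alpha)}$, and then to recognise this kernel as $\mathfrak{g}_\beta$. By Proposition \ref{prop: RSG:Lie}, $R_{\SG (\alpha)}$ is a closed Lie subgroup of $(\Diff (G),\circ)$ and the inclusion $R_{\SG (\alpha)}\hookrightarrow \Diff (G)$ is a morphism of Lie groups; hence its differential at $\id_G$ identifies $\Lf(R_{\SG (\alpha)}) = T_{\id_G}R_{\SG (\alpha)}$ with a closed Lie subalgebra of $\Lf((\Diff (G),\circ)) = \mathfrak{X}_c(G)$, where the bracket is the restriction of $\LB$, the negative of the usual bracket of vector fields (as recalled just before Proposition \ref{prop: RSG:Lie}). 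So it remains only to compute $T_{\id_G}R_{\SG (\alpha)}$ as a subspace of $\mathfrak{X}_c(G) = \Gamma_c(TG)$.

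For this I would use that $R_{\SG (\alpha)} = (\beta_*|_{\Diff (G)})^{-1}(\{\beta\})$, where $\beta_* \colon C^\infty (G,G) \rightarrow C^\infty (G,M)$ is a submersion by the Stacey-Roberts Lemma \ref{staceyroberts}, restricted to the open subset $\Diff (G)$. Consequently $T_{\id_G}R_{\SG (\alpha)} = \ker T_{\id_G}(\beta_*)$. In the submersion charts constructed in the proof of Lemma \ref{staceyroberts}, $\beta_*$ is expressed near $\id_G$ as the continuous linear map $(T\beta)_* \colon \Gamma_c(\id_G^*TG)\rightarrow\Gamma_c(\beta^*TM)$, $X\mapsto T\beta\circ X$; being already linear, this map equals its own differential, so under the canonical identifications $T_{\id_G}\Diff (G)\cong\Gamma_c(TG) = \mathfrak{X}_c(G)$ and $T_\beta C^\infty (G,M)\cong\Gamma_c(\beta^*TM)$ we obtain $T_{\id_G}(\beta_*) = (T\beta)_*$ (cf.\ also \ref{setup: pf}).

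Finally, a vector field $X\in\mathfrak{X}_c(G)$ lies in $\ker (T\beta)_*$ if and only if $T_g\beta\big(X(g)\big) = 0_{\beta(g)}$ for every $g\in G$, i.e.\ if and only if $X(g)\in\ker T_g\beta = T_g^\beta G$ for all $g$, i.e.\ if and only if $X(G)\subseteq T^\beta G$ -- which is exactly the defining condition of $\mathfrak{g}_\beta$. Hence $\Lf(R_{\SG (\alpha)}) = \mathfrak{g}_\beta$ with the bracket $\LB$, as claimed. (One could also verify by hand that $\mathfrak{g}_\beta$ is closed under $\LB$, using that $T^\beta G$ is the involutive distribution tangent to the $\beta$-fibres, but this is automatic from the subgroup structure.) The only real work here is the careful bookkeeping of the identifications of tangent spaces of manifolds of mappings with spaces of compactly supported sections, and of the differential of a pushforward with $(T\beta)_*$ -- and this is precisely what the proof of the Stacey-Roberts Lemma already provides, so I expect no genuine obstacle beyond that.
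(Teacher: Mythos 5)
Your argument is correct, but it takes a genuinely different route from the paper. You read off $\Lf\big(R_{\SG(\alpha)}\big)=T_{\id_G}R_{\SG(\alpha)}$ directly from the submersion charts of the Stacey--Roberts Lemma: since $R_{\SG(\alpha)}=(\beta_*|_{\Diff(G)})^{-1}(\{\beta\})$ and the local representative of $\beta_*$ in the canonical charts is the continuous linear map $(T\beta)_*$, the tangent space at the identity is $\ker (T\beta)_*=\mathfrak{g}_\beta$, and the bracket is inherited because the inclusion is a morphism of Lie groups. The paper instead invokes Neeb's characterisation of the Lie algebra of a Lie subgroup as the \emph{differential tangent set} $\Lf^d\big(R_{\SG(\alpha)}\big)=\{\dot c(0)\mid c\in C^1_*([0,1],\Diff(G)),\ c([0,1])\subseteq R_{\SG(\alpha)}\}$ and proves the two inclusions by hand: differentiating $\beta_*\circ c\equiv\beta$ gives $\Lf^d\subseteq\mathfrak{g}_\beta$, and for the converse each $X\in\mathfrak{g}_\beta$ is exponentiated via $t\mapsto\Fl_1(tX)$ to a curve shown to stay in $R_{\SG(\alpha)}$. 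Your approach buys brevity and avoids the flow argument entirely, at the cost of leaning on the bookkeeping that identifies $T(\beta_*)$ with $(T\beta)_*$ in the submersion charts (which, as you note, the proof of Lemma \ref{staceyroberts} already supplies); the paper's curve-based computation is more hands-on and, as a by-product, already exhibits the one-parameter subgroups of $R_{\SG(\alpha)}$ generated by elements of $\mathfrak{g}_\beta$, which is the same mechanism reused in Lemma \ref{lem: semireg} for the regularity statement. Both arguments are sound; no gap.
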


\begin{rem}
 Note that it is clear a priori that $\mathfrak{g}_\beta$ is a Lie subalgebra of $\big(\mathfrak{X}_c (G), \LB\big)$ since $X \in \mathfrak{g}_\beta$ if and only if $X$ is $\beta$-related to the zero-vector field on $M$.
 (cf.\ \cite[Lemma 3.5.5]{Mackenzie05} for a similar construction in the context of the Lie algebroid of $\cG$). However, this will also follow from our proof below.
\end{rem}

\begin{proof}[Proof of Proposition \ref{prop: RSG:LA}]
	Denote by $C^1_* \big([0,1], \Diff (G)\big)$ the set of all continuously differentiable curves $c \colon [0,1] \rightarrow \Diff (G)$ with $c(0) = \id_G$.
	Since $R_{\SG (\alpha)}$ is a closed Lie subgroup of $\Diff (G)$ by Proposition \ref{prop: RSG:Lie}, its Lie algebra $\Lf (R_{\SG (\alpha)})$ can be computed by \cite[Proposition II.6.3]{neeb2006} as the differential tangent set
	$$\Lf^d (R_{\SG (\alpha)}) \coloneq \big\{ \dot{c} (0) \in \mathfrak{X}_c (G) \mid c \in C^1_* \big([0,1],\Diff (G)\big), c\big([0,1]\big) \subseteq R_{\SG (\alpha)}\big\}.$$
	Let us consider a $C^1$-curve $c \colon [0,1] \rightarrow R_{\SG (\alpha)}$ with $c(0) = \id_G$. By defintion of $R_{\SG (\alpha)}$ we have $\beta_* \circ c (t) = \beta$ for all $t \in [0,1]$ and the right hand side does not depend on $t$.
	Hence, with the notation of \ref{setup: smoothstruct}, we obtain
	$$ \left.\frac{\dd}{\dd t}\right|_{t=0} \beta_* \circ c (t) = (T\beta)_* (\dot{c}(0)) = \mathbf{0} \in \cD_\beta (G,TM).$$
	Here we used the formula $T(\beta_*) = (T\beta)_*$ from \cite[Corollary 10.14]{michor1980} and conclude that the compactly supported vector field $\dot{c}(0)$ takes its values in $T^\beta G$, whence $\Lf^d (R_{\SG (\alpha)}) \subseteq \mathfrak{g}_\beta$.
	
	For the converse let $X \in \mathfrak{g}_\beta$. Recall that the flow map $\Fl_1 \colon \mathfrak{X}_c (G) \rightarrow \Diff_c (G)$ to time $1$ is the Lie group exponential of $\Diff (G)$ (see \cite[4.6]{MR702720}).
	Thus we can exponentiate $X$ to the smooth one-parameter curve $c_X \colon [0,1] \rightarrow \Diff_c (G), t \mapsto \Fl_1 (tX)$ with $c_X (0) = \id_G$ and $\dot{c}_X (0) = X$.
	To see that $c_X$ takes its values in $R_{\SG (\alpha)}$, note that $c_X (0) \in R_{\SG (\alpha)}$ and $R_{\SG (\alpha)}$ is a closed Lie subgroup. Thus it suffices to prove that the derivative of $\beta_* \circ c_X$ vanishes.
	However, arguing again as above we have
	\begin{displaymath}
	 \left.\frac{\dd}{\dd t}\right|_{t=0} \beta_* \circ c_X (t) = (T\beta)_* \dot{c}_X (0) = (T\beta)_* (X) = \mathbf{0} \in \cD_\beta (G,TM)
	\end{displaymath}
        since $X$ takes its values in $T^\beta G$ as $X \in \mathfrak{g}_\beta$.
        Summing up $\Lf^d (R_{\SG (\alpha)}) = \mathfrak{g}_\beta$ and the Lie bracket coincides with the restriction of the bracket on $\mathfrak{X}_c (G)$ to the subspace.
\end{proof}

Our next aim is to establish regularity in the sense of Milnor for the subgroup $R_{\SG (\alpha)}$.
The idea here is again to leverage that $R_{\SG (\alpha)}$ is a closed Lie subgroup of $\Diff (G)$.

\begin{setup}[Regularity of $\Diff (G)$]\label{setup: reg:Diff}
 For a paracompact manifold $G$ it is known (see \cite[Corollary 13.7]{hg2015}) that $\Diff (G)$ is $C^1$-regular (even $C^0$-regular if $G$ is $\sigma$-compact), i.e.\ for all $C^1$-curves $\eta \colon [0,1] \rightarrow \mathfrak{X}_c (G)$ the equation
\begin{equation}\label{eq: reg}
 \begin{cases}
  \gamma'(t) &= \eta(t). \gamma (t) \coloneq T\rho_{\gamma (t)} (\eta (t)),\\
  \gamma (0) &= \id_G
 \end{cases}
\end{equation}
where $T\rho_{\gamma (t)}$ is the tangent map of the right translation $\rho_{\gamma (t)}$ in $\Diff (G)$ has a unique solution $\gamma_\eta$ such that
$\Evol \colon C^\infty \big([0,1],\mathfrak{X}_c (G)\big) \rightarrow C^2 \big([0,1], \Diff (G)\big), \eta\mapsto \gamma_\eta$ is smooth.

In the case of $\Diff (G)$, the solution of \eqref{eq: reg} is the flow
\begin{displaymath}
 \Fl^\eta \colon [0,1] \rightarrow \Diff (G) , \quad t \mapsto \Fl^\eta (t,\cdot)
\end{displaymath}
 of the time dependent vector field $\eta$, i.e.\ for every $x \in G,$ $t \mapsto \Fl^\eta (t)(x)$ is the integral curve of $\eta^\wedge \colon [0,1] \times G \rightarrow TG , (t,y) \mapsto \eta(t)(y)$ starting at $x$.
 Thus for $x \in G$ and every $t_0 \in [0,1]$ we have
      \begin{equation}\label{intcurve}
       \left.\frac{\partial}{\partial t}\right|_{t=t_0} \Fl^\eta (t,x) = \eta \big(t_0, \Fl^{\eta} (t_0,x)\big)
      \end{equation}
 For $G$ compact, a proof of this can be found in \cite[p.\ 1046]{Milnor84}. It is also true in the non-compact case as a special version of \cite[Section 5.4]{MR3328452} (in particular \cite[Proof of Theorem 5.4.11]{MR3328452}) where the case of non-compact orbifolds is treated.\footnote{To our knowledge \cite{MR3328452} is the only source currently in print where a full proof of the regularity of $\Diff(G)$ for the non-compact $G$ in our setting can be found. See however \cite[Theorem III.4.1.]{neeb2006} and \cite[Corollary 13.7]{hg2015} where an unpublished preprint by H.\ Gl\"{o}ckner which paved the way for the treatment in \cite{MR3328452} is referenced. Further, we refer to \cite[4.6]{MR702720} for related considerations.}
\end{setup}

\begin{lem}\label{lem: semireg}
 Let $\eta \colon [0,1] \rightarrow \mathfrak{g}_\beta \subseteq \mathfrak{X}_c (G)$ be a $C^k$-curve, where $k=0$ if $G$ is $\sigma$-compact and $k=1$ if $G$ is only paracompact.
 Then the solution $\gamma_\eta$ of \eqref{eq: reg} factors to a map $\gamma_\eta \colon [0,1] \rightarrow R_{\SG (\alpha)}$.
\end{lem}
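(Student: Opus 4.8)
The plan is to show, set-theoretically, that the solution curve $\gamma_\eta$ of \eqref{eq: reg} — which exists precisely because, by \ref{setup: reg:Diff}, the hypothesis on $k$ ($k=0$ for $G$ $\sigma$-compact, $k=1$ for $G$ paracompact) is exactly what guarantees solvability of \eqref{eq: reg} in $\Diff(G)$ — satisfies $\gamma_\eta(t) \in R_{\SG(\alpha)} = \{\varphi \in \Diff(G) \mid \beta\circ\varphi = \beta\}$ for every $t \in [0,1]$. Since $\gamma_\eta(0) = \id_G$ already lies in $R_{\SG(\alpha)}$ and, by Proposition \ref{prop: RSG:Lie}, $R_{\SG(\alpha)}$ is a closed split submanifold of $\Diff(G)$, once the containment $\gamma_\eta([0,1]) \subseteq R_{\SG(\alpha)}$ is established the corestricted curve $\gamma_\eta \colon [0,1] \to R_{\SG(\alpha)}$ is automatically of the same regularity class as $\gamma_\eta$ viewed into $\Diff(G)$. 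So the entire content is the identity $\beta\circ\gamma_\eta(t) = \beta$.

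I would carry this out by passing to finite dimensions. Recall from \ref{setup: reg:Diff} that $\gamma_\eta(t) = \Fl^\eta(t,\cdot)$ is the flow of the time-dependent vector field $\eta$, so that by \eqref{intcurve}, for each fixed $x \in G$ the curve $c_x \colon t \mapsto \Fl^\eta(t,x)$ is a $C^1$-curve in the finite-dimensional manifold $G$ with $c_x(0) = x$ and $\dot c_x(t_0) = \eta(t_0)\big(c_x(t_0)\big)$ for all $t_0$. I would then consider the curve $t \mapsto \beta(c_x(t))$ in $M$ and differentiate at an arbitrary $t_0$: by the chain rule this derivative equals $T_{c_x(t_0)}\beta\big(\eta(t_0)(c_x(t_0))\big)$. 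Now $\eta(t_0) \in \mathfrak{g}_\beta$, so by the description of $\mathfrak{g}_\beta$ in Proposition \ref{prop: RSG:LA} we have $\eta(t_0)(g) \in T_g^\beta G = \ker T_g\beta$ for every $g \in G$; hence the derivative above is $0_{\beta(c_x(t_0))}$. Thus $t \mapsto \beta(c_x(t))$ is a $C^1$-curve in the Hausdorff finite-dimensional manifold $M$ with identically vanishing derivative, hence constant, equal to $\beta(c_x(0)) = \beta(x)$. As $x$ was arbitrary, $\beta(\gamma_\eta(t)(x)) = \beta(x)$ for all $t$ and $x$, i.e.\ $\beta\circ\gamma_\eta(t) = \beta$, which is exactly $\gamma_\eta(t) \in R_{\SG} \cap \Diff(G) = R_{\SG(\alpha)}$.

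There is essentially no serious obstacle; the one point requiring care is the identification of $\dot\gamma_\eta(t_0)$, evaluated at $x$, with $\eta(t_0)\big(\gamma_\eta(t_0)(x)\big)$, and this is precisely the content of \eqref{intcurve} together with the description of tangent vectors to $\Diff(G)$. An alternative route, parallel to the proof of Proposition \ref{prop: RSG:LA}, would be to differentiate $\beta_*\circ\gamma_\eta$ directly in $C^\infty(G,M)$ using $T(\beta_*) = (T\beta)_*$ from \cite[Corollary 10.14]{michor1980}, conclude $(T\beta)_*(\dot\gamma_\eta(t_0)) = \mathbf 0$ because $\eta(t_0)$ is valued in $T^\beta G$, and then invoke that a $C^1$-curve in the Hausdorff manifold $C^\infty(G,M)$ with vanishing derivative is constant (locally constant in charts, plus connectedness of $[0,1]$). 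I would prefer the pointwise argument above, since it never leaves the finite-dimensional setting and so needs no lemma about integrating vanishing derivatives on manifolds of mappings.
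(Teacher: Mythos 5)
Your proof is correct, and the core computation is the same as the paper's: both arguments come down to \eqref{intcurve} plus the fact that $\eta(t_0)\in\mathfrak{g}_\beta$ takes values in $T^\beta G=\ker T\beta$, and both reduce the task to the identity $\beta\circ\gamma_\eta(t)=\beta$ after noting that membership in the closed subgroup $R_{\SG(\alpha)}$ is exactly this condition. The difference is organisational: the paper forms the curve $c(t)=\beta_*\circ\Evol(\eta)(t)$ in $C^\infty(G,M)$, invokes \cite[Lemma 10.15]{michor1980} to test the vanishing of $\dot c(t_0)\in\cD_{c(t_0)}(G,TM)$ through the point evaluations $\ev_x$, and then concludes constancy of $c$ from the vanishing of its derivative in the mapping manifold; you instead fix $x\in G$ at the outset and show directly that the finite-dimensional curve $t\mapsto\beta\big(\Fl^\eta(t,x)\big)$ has vanishing derivative, hence is constant, never leaving $G$ and $M$. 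Your version is marginally more elementary -- it dispenses with the appeal to Michor's lemma and with the (implicit in the paper) step that a $C^1$-curve with vanishing derivative in $C^\infty(G,M)$ is constant -- at the cost of nothing, since the two formulations of the conclusion are equivalent. Your remarks on existence of the solution (from $C^k$-regularity of $\Diff(G)$ per \ref{setup: reg:Diff}) and on the regularity of the corestriction to the closed split submanifold $R_{\SG(\alpha)}$ are also correct and slightly more explicit than the paper's.
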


 \begin{proof}
  In the following we set $k=0$ if $G$ is $\sigma$-compact and $k=1$ if $G$ is only paracompact.
  Since $\Diff (G)$ is $C^k$-regular the solution $\Evol (\eta) \in C^{k+1} \big([0,1], \Diff (G)\big)$ of \eqref{eq: reg} exists in $\Diff(G)$.
  Now $\Evol (\eta)(0) = \id_G$ and $R_{\SG (\alpha)}$ is the closed subgroup of all elements $g$ in $\Diff (G)$ which satisfy $\beta \circ g = \beta$.
  Thus it suffices to prove that $\beta_* \circ \Evol (\eta)$ is constant in $t$, i.e.\ we will show that its derivative vanishes in $T C^\infty (G,M)$.

  Due to \cite[Lemma 10.15]{michor1980} (the proof in loc.cit.\ for $C^\infty$-curves carries over verbatim to $C^1$-curves) the derivative of $c(t) \coloneq \beta_* \circ \Evol (\eta)$ at $t_0$ vanishes in $T_{c(t_0)} C^\infty (G,M) \cong \cD_{c(t_0)} (G,TM)$ if and only if for each $x \in G$ we have $T_{t_0} \big(\ev_x \circ c(t)\big) = 0$, where $\ev_x \colon C^\infty (G,M) \rightarrow M, f \mapsto f(x)$.
  Using now that $\ev_x \circ \beta_* = \beta \circ \ev_x$, we compute
   \begin{align*}
    T_{t_0} \ev_x \circ c (t) &= T_{t_0} \beta \circ \ev_x \big(\Fl^\eta (t,\cdot)\big) = T\beta \left(\left.\frac{\partial}{\partial t}\right|_{t=t_0} \Fl^\eta (t,x)\right) \stackrel{\eqref{intcurve}}{=} T\beta \Big(\eta \big(t,\Fl^\eta (t,x)\big)\Big) \\
			      &= 0 \quad \quad \text{since } \eta \text{ takes its image in } T^\beta G . \qedhere
   \end{align*}
 \end{proof}

 \begin{prop}\label{prop: RSG:reg}
  The group $(R_{\SG (\alpha)}, \circ)$ is $C^1$-regular and even $C^0$-regular if $G$ is $\sigma$-compact.
 \end{prop}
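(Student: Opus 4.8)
The plan is to exploit once more that $R_{\SG(\alpha)}$ is a closed Lie subgroup of the $C^1$-regular (resp.\ $C^0$-regular) Lie group $\Diff(G)$, and to combine this with the general principle that a closed Lie subgroup whose Lie algebra ``captures enough solutions'' inherits regularity. Concretely, fix $k=1$ in the paracompact case and $k=0$ in the $\sigma$-compact case, and let $\gamma\colon[0,1]\to\mathfrak{g}_\beta$ be a $C^k$-curve in the Lie algebra of $R_{\SG(\alpha)}$ identified in Proposition \ref{prop: RSG:LA}. Viewing $\gamma$ as a $C^k$-curve into $\mathfrak{X}_c(G)=\Lf(\Diff(G))$, the $C^k$-regularity of $\Diff(G)$ recalled in \ref{setup: reg:Diff} produces a unique solution $\Evol(\gamma)\in C^{k+1}([0,1],\Diff(G))$ of the initial value problem \eqref{eq: reg}. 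By Lemma \ref{lem: semireg} this solution factors through the closed subgroup $R_{\SG(\alpha)}$, i.e.\ $\Evol(\gamma)([0,1])\subseteq R_{\SG(\alpha)}$. Since $R_{\SG(\alpha)}$ carries the submanifold structure and the inclusion $R_{\SG(\alpha)}\hookrightarrow\Diff(G)$ is a smooth immersion (indeed a closed embedding), a curve into $R_{\SG(\alpha)}$ which is $C^{k+1}$ as a curve into $\Diff(G)$ is already $C^{k+1}$ into $R_{\SG(\alpha)}$; hence $\Evol(\gamma)$ solves \eqref{eq: regular} inside $R_{\SG(\alpha)}$, and uniqueness is inherited from $\Diff(G)$. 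This establishes $C^k$-semiregularity of $R_{\SG(\alpha)}$.

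It then remains to upgrade semiregularity to regularity, i.e.\ to show smoothness of the map $\mathrm{evol}\colon C^k([0,1],\mathfrak{g}_\beta)\to R_{\SG(\alpha)}$, $\gamma\mapsto\Evol(\gamma)(1)$. Here the point is that $\mathfrak{g}_\beta$ is a closed (split) Lie subalgebra of $\mathfrak{X}_c(G)$, so $C^k([0,1],\mathfrak{g}_\beta)$ is a closed locally convex subspace of $C^k([0,1],\mathfrak{X}_c(G))$, and the diagram
\begin{equation}\label{eq: evoldiag}
\begin{aligned}
\begin{xy}
\xymatrix{
C^k\big([0,1],\mathfrak{g}_\beta\big) \ar[r] \ar[d]_{\mathrm{evol}} & C^k\big([0,1],\mathfrak{X}_c(G)\big) \ar[d]^{\mathrm{evol}_{\Diff(G)}} \\
R_{\SG(\alpha)} \ar[r] & \Diff(G)
}
\end{xy}
\end{aligned}
\end{equation}
commutes. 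The top horizontal arrow is a smooth (continuous linear) inclusion, and $\mathrm{evol}_{\Diff(G)}$ is smooth by \ref{setup: reg:Diff}, so the composite $C^k([0,1],\mathfrak{g}_\beta)\to\Diff(G)$ is smooth and takes values in the closed submanifold $R_{\SG(\alpha)}$. By the universal property of closed split submanifolds (cf.\ \cite[p.\ 10]{Glofun}), a smooth map into the ambient manifold whose image lies in such a submanifold is smooth as a map into the submanifold; hence $\mathrm{evol}\colon C^k([0,1],\mathfrak{g}_\beta)\to R_{\SG(\alpha)}$ is smooth. Combining this with the semiregularity established above, $(R_{\SG(\alpha)},\circ)$ is $C^k$-regular, i.e.\ $C^1$-regular in general and $C^0$-regular when $G$ is $\sigma$-compact; in particular it is $C^\infty$-regular.

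The main obstacle I anticipate is not any single deep computation — Lemma \ref{lem: semireg} already did the analytic work of checking that the $\Diff(G)$-evolution stays in $R_{\SG(\alpha)}$ — but rather the bookkeeping needed to pass between the intrinsic evolution problem on $R_{\SG(\alpha)}$ (with right translations $\rho_h$ of $R_{\SG(\alpha)}$ and its own Lie algebra) and the ambient one on $\Diff(G)$. One must check that the action ``$v.h$'' computed intrinsically in $R_{\SG(\alpha)}$ agrees with the restriction of the corresponding action in $\Diff(G)$; this is immediate because $R_{\SG(\alpha)}$ is a subgroup and the submanifold structure is the induced one, so $T\rho_h$ on $R_{\SG(\alpha)}$ is the restriction of $T\rho_h$ on $\Diff(G)$ — but it is worth stating explicitly so that the solution of \eqref{eq: reg} is genuinely recognised as the solution of \eqref{eq: regular} for $R_{\SG(\alpha)}$. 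The only mild subtlety is the regularity class of the solution curve (the $C^{k+1}$ versus $C^2$ distinction in \ref{setup: reg:Diff}), but since $C^{k+1}\subseteq C^{k+1}$ trivially and $\mathrm{evol}$ only records the endpoint at $t=1$, this causes no difficulty.
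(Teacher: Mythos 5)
Your proposal is correct and takes essentially the same route as the paper: establish $C^k$-semiregularity via Lemma \ref{lem: semireg}, then use that $R_{\SG(\alpha)}$ is a closed Lie subgroup of the $C^k$-regular group $\Diff(G)$ to upgrade semiregularity to regularity. The only difference is that the paper delegates this second step to a black-box citation (\cite[Lemma B.5]{MR3573833}), whereas you inline its proof via the corestriction argument for $\mathrm{evol}$; the mathematical content is the same.
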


 \begin{proof}
  Due to Lemma \ref{lem: semireg} we know that $R_{\SG (\alpha)}$ is $C^1$-semiregular ($C^0$-semiregular if $G$ is $\sigma$-compact).
  Since $R_{\SG (\alpha)}$ is a closed Lie subgroup of $\Diff (G)$, the assertion then follows from \cite[Lemma B.5]{MR3573833}.
 \end{proof}

 By \ref{setup: RISO} the groups $(R_{\SG (\alpha)}, \circ_\op)$ and $(\SG (\alpha),\star)$ are isomorphic via $R$ and this isomorphism is a diffeomorphism with respect to the natural manifold structures.
 As $(R_{\SG (\alpha)}, \circ_\op)$ and $(R_{\SG (\alpha)}, \circ)$ are anti-isomorphic as Lie groups (e.g.\ by composing $R$ with the group inversion), we conclude from Propositions \ref{prop: RSG:Lie}, \ref{prop: RSG:LA} and \ref{prop: RSG:reg} the following.

 \begin{cor}\label{cor: SGA}
  Let $\cG$ be a Lie groupoid, then the group $(\SG (\alpha), \star)$ is a $C^1$-regular Lie group whose Lie algebra is anti-isomorphic to the algebra $\mathfrak{g}_\beta$.
  If the manifold $G$ is $\sigma$-compact, then $(\SG(\alpha), \star)$ is even $C^0$-regular.
 \end{cor}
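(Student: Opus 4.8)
The plan is to transport the entire structure established for $(R_{\SG(\alpha)},\circ)$ in Propositions \ref{prop: RSG:Lie}, \ref{prop: RSG:LA} and \ref{prop: RSG:reg} along the map $R$. Recall from \ref{setup: mon:iso} that $R$ restricts to a diffeomorphism $\SG(\alpha)\to R_{\SG(\alpha)}$ of submanifolds of $C^\infty(G,G)$ which is at the same time a group isomorphism $(\SG(\alpha),\star)\to (R_{\SG(\alpha)},\circ_\op)$, and that the submanifold structure $\SG(\alpha)$ carries as an open subset of $\SG$ agrees with the one pulled back along $R$. By Proposition \ref{prop: RSG:Lie} the group $(R_{\SG(\alpha)},\circ)$ is a Lie group; its opposite $(R_{\SG(\alpha)},\circ_\op)$ is then a Lie group as well, because the inversion $g\mapsto g^{-1}$ is a diffeomorphism and a group isomorphism $(R_{\SG(\alpha)},\circ)\to (R_{\SG(\alpha)},\circ_\op)$. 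Pulling this structure back along $R$ exhibits $(\SG(\alpha),\star)$ as an infinite-dimensional Lie group.

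For the Lie algebra I would differentiate these (anti-)isomorphisms at the identity: an isomorphism of Lie groups induces an isomorphism of Lie algebras, while inversion induces an anti-isomorphism. Composing, $R$ identifies $\Lf(\SG(\alpha),\star)$ with $\Lf\big((R_{\SG(\alpha)},\circ_\op)\big)$, which in turn is anti-isomorphic to $\Lf\big((R_{\SG(\alpha)},\circ)\big)$; by Proposition \ref{prop: RSG:LA} the latter is $\mathfrak{g}_\beta$ with the bracket specified there. Hence $\Lf(\SG(\alpha),\star)$ is anti-isomorphic to $\mathfrak{g}_\beta$, consistent with the formulation in Theorem B, since an anti-isomorphism onto $\mathfrak{g}_\beta$ with the sign-reversed bracket is the same as an isomorphism onto $\mathfrak{g}_\beta$ with the usual bracket of vector fields.

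For regularity I would use that $C^r$-regularity is an invariant of the isomorphism class of a Lie group, together with the fact that a Lie group $H$ is $C^r$-regular if and only if its opposite group $H^{\op}$ is --- again because $g\mapsto g^{-1}$ is a Lie group isomorphism $H\to H^{\op}$ (equivalently, passing to $H^{\op}$ merely interchanges the right- and left-logarithmic-derivative initial value problems, which are simultaneously solvable). By Proposition \ref{prop: RSG:reg} the group $(R_{\SG(\alpha)},\circ)$ is $C^1$-regular, and $C^0$-regular when $G$ is $\sigma$-compact; transporting along $R$ (and passing to the opposite group) yields the same for $(\SG(\alpha),\star)$.

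There is essentially no serious obstacle here: all the analytic content sits in the three cited propositions and in \ref{setup: mon:iso}, and what remains is bookkeeping. The one place that warrants care is keeping track of the sign of the Lie bracket produced by the opposite-group construction, and convincing oneself that $C^r$-regularity --- a notion phrased via right translations --- is genuinely insensitive to replacing $\circ$ by $\circ_\op$.
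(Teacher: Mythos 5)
Your proposal is correct and follows essentially the same route as the paper: the authors likewise transport the Lie group structure, Lie algebra, and regularity of $(R_{\SG(\alpha)},\circ)$ from Propositions \ref{prop: RSG:Lie}, \ref{prop: RSG:LA} and \ref{prop: RSG:reg} along the diffeomorphic group isomorphism $R$ onto $(R_{\SG(\alpha)},\circ_\op)$, using inversion to pass between a Lie group and its opposite. The only difference is that you spell out the bookkeeping (invariance of $C^r$-regularity under isomorphism and under passing to the opposite group, and the sign of the bracket) which the paper leaves implicit.
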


Note that in the $C^0$-regular case the validity of the strong Trotter formula follows from \cite{Hanusch18}. It is currently unknown whether  similar results hold for $C^1$-regular Lie  groups. 

\section{Topological subgroups of the unit group}\label{sect: topsub}

In this section we investigate certain topological subgroups of the Lie group $\SG (\alpha)$.
Albeit these groups will often be closed topological groups, we remark that not every closed subgroup of an infinite-dimensional Lie group is a Lie subgroup (cf.\ \cite[Remark IV.3.17]{neeb2006}).
Hence in general we do not know whether these groups carry a differentiable structure turning them into Lie subgroups

\subsection*{The action of the unit group and its stabilisers} \addcontentsline{toc}{subsection}{The action of the unit group and its stabilisers} \label{ss: act}
We first consider the group action of $\SG (\alpha)$ on $\cG$ and subgroups related to the stabilisation of Lie subgroupoids under this action.

\begin{rem} Let $\cG = (G \toto M)$ be a Lie groupoid.
By \cite[Proposition 6]{Amiri2017} the monoid $\SG$ acts on $G$ from the right by
\begin{displaymath}
 x.f=R_f(x)=xf(x) , \quad x \in G, f \in \SG.
\end{displaymath}
Indeed $x.\alpha=x$ and $x.(f\star g)=(x.f).g$ for $x\in G$ and $f,g\in\SG$. Note that this action is faithful, i.e.\ the only element of $\SG$ acting trivially is the identity element $\alpha$ of $\SG$.
It is easy to check that the right-action of $\SG$ is transitive if and only if $G$ is a group, i.e.\ $\cG = (G \toto \ast)$.
\end{rem}
\begin{defn}
Let $\cG = (G \toto M)$ be a Lie groupoid and $H \subseteq G$.
Define
  \begin{align*}
   I_H(\SG) \coloneq \big\{f\in\SG: f(H)\subseteq H\big\}, \quad I_H\big(\SG(\alpha)\big) \coloneq \big\{f\in\SG(\alpha): f(H)\subseteq H\big\}
  \end{align*} and the \emph{setwise stabiliser}
$F_H\big(\SG(\alpha)\big) \coloneq \big\{f\in\SG(\alpha): R_f(H)=H\big\}$ \emph{of $H$}.
\end{defn}
\begin{rem}
\begin{enumerate}
 \item Using the group homomorphism $R\colon \SG (\alpha) \rightarrow \Diff (G)$, it is easy to check that for a subgroupoid $H$ of $G$,
$F_H\big(\SG(\alpha)\big)$ is a subgroup of $\SG(\alpha)$. In addition, $I_H(\SG)$ and $I_H\big(\SG(\alpha)\big)$ are then two submonoids of $\SG$.
\item Since point evaluations are continuous with respect to the (fine) very strong topology (see Corollary \ref{cor: ev}), if $H$ is closed, then the sets $I_H\big(\SG(\alpha)\big), I_H(\SG)$ and $F_H\big(\SG(\alpha)\big)$ are closed in $\SG$.
\item For $f\in I_H\big(\SG(\alpha)\big)$ and  $y\in H$ there exist $x\in G$ with $R_f(x)=xf(x)=y$. However, since it is not clear that $f(x)\in H$, the inverse $g$ of $f$ in $\SG (\alpha)$ does not necessarily satisfy $g(H) \subseteq H$.
As $H$ may not be invariant under $g$, the monoid $I_H\big(\SG(\alpha)\big)$ may not be closed under the inversion map.
\end{enumerate}
\end{rem}

In the following we show that Lie groupoid isomorphisms over the identity (i.e.\ a Lie groupoid isomorphism $(\psi, \id_M)$ induce an isomorphism of the topological groups $F_H\big(\SG(\alpha)\big)$ and $F_{\psi (H)}\big(\SG(\alpha)\big)$.

\begin{prop}\label{Iso1}
Let $\cG = (G \toto M)$ and $\cG' = (G' \toto M)$ be Lie groupoids and $\psi \colon G \rightarrow G'$ be an isomorphism of Lie groupoid over $\id_M$.
Let $H \subseteq G$ be a wide Lie subgroupoid of $\cG$ and $K\coloneq \psi(H)$, then $F_H\big(\SG(\alpha)\big)$ and $F_K\big(S_{G'}(\alpha')\big)$ are isomorphic as topological groups (with the (fine) very strong topology inherited from $C^{\infty}(G,G)$ and $C^\infty (G',G')$, respectively) via 
\begin{displaymath}
 \Psi \colon F_H\big(\SG(\alpha)\big) \rightarrow F_K\big(S_{G'}(\alpha')\big) ,  f \mapsto \psi \circ f\circ \psi^{-1}.
 \end{displaymath}
\end{prop}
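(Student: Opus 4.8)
The plan is to verify that $\Psi$ is a well-defined group homomorphism, that it is a bijection with inverse given by conjugation by $\psi^{-1}$, and finally that both $\Psi$ and $\Psi^{-1}$ are continuous for the subspace topologies inherited from the respective manifolds of mappings. First I would check that $\Psi$ maps $F_H(\SG(\alpha))$ into $F_K(S_{G'}(\alpha'))$. Since $\psi$ is a Lie groupoid morphism over $\id_M$ we have $\alpha' \circ \psi = \alpha$ and $\beta' \circ \psi = \beta$, and $\psi$ intertwines the partial multiplications; hence for $f \in \SG$ one computes $\beta' \circ (\psi\circ f\circ \psi^{-1}) = \beta\circ f\circ \psi^{-1} = \alpha\circ \psi^{-1} = \alpha'$, so $\psi\circ f\circ \psi^{-1} \in S_{G'}$. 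A short computation with the definition of $\star$ (using that $\psi$ is multiplicative) shows $\Psi(f\star g) = \Psi(f)\star \Psi(g)$, and since $R_{\psi\circ f\circ \psi^{-1}} = \psi\circ R_f\circ \psi^{-1}$ (again using multiplicativity of $\psi$), $f\in \SG(\alpha)$ forces $R_{\Psi(f)}\in \Diff(G')$, so $\Psi(f)\in S_{G'}(\alpha')$; and $R_{\Psi(f)}(K) = \psi(R_f(\psi^{-1}(\psi(H)))) = \psi(R_f(H)) = \psi(H) = K$, so indeed $\Psi(f)\in F_K(S_{G'}(\alpha'))$. The map $f\mapsto \psi^{-1}\circ f\circ \psi$ is the two-sided inverse by the same reasoning applied to $\psi^{-1}$, so $\Psi$ is a group isomorphism.

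It remains to show $\Psi$ is a homeomorphism. Here I would appeal to the functorial behaviour of push-forwards and pull-backs on manifolds of mappings: the map $\psi_* \colon C^\infty(G,G') \to C^\infty(G,G')$, $h\mapsto \psi\circ h$, is smooth (by \ref{setup: pf}), and precomposition with the \emph{diffeomorphism} $\psi^{-1}$, i.e. $(\psi^{-1})^* \colon C^\infty(G',G') \to C^\infty(G,G')$, $h\mapsto h\circ \psi^{-1}$, is smooth as well (composition with a fixed diffeomorphism is smooth on the manifold of mappings). In particular both are continuous. Thus $h\mapsto \psi\circ h\circ\psi^{-1}$ is a continuous (indeed smooth) map $C^\infty(G,G)\to C^\infty(G',G')$, and its restriction to the subspace $F_H(\SG(\alpha))$ lands continuously in $F_K(S_{G'}(\alpha'))$ with the subspace topology. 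The same argument with the roles of $\psi$ and $\psi^{-1}$, $G$ and $G'$ interchanged shows $\Psi^{-1}$ is continuous. Hence $\Psi$ is an isomorphism of topological groups.

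The only genuinely delicate point is the continuity of conjugation on the full manifold of mappings, since we are in the fine very strong (Whitney-type) topology where naive composition maps need not be continuous. This is precisely why it matters that $\psi$ is a diffeomorphism: composition with a fixed diffeomorphism on either side is a homeomorphism of the relevant spaces of smooth maps (it is even an affine chart change up to the local addition, cf. Appendix \ref{app: topo}), so no properness hypothesis on $\cG$ is needed here. I expect writing out the intertwining identities $R_{\Psi(f)} = \psi\circ R_f\circ\psi^{-1}$ and $\Psi(f\star g)=\Psi(f)\star\Psi(g)$ to be routine; the main thing to state carefully is that $\psi$ being an isomorphism over $\id_M$ (rather than over an arbitrary base diffeomorphism) is what makes $\alpha' \circ\psi = \alpha$ hold, which is what keeps the image inside $S_{G'}$.
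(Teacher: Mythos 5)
Your proposal is correct and follows essentially the same route as the paper: establish $R_{\Psi(f)}=\psi\circ R_f\circ\psi^{-1}$ to get well-definedness and the stabiliser condition, exhibit conjugation by $\psi^{-1}$ as the inverse, verify the homomorphism identity directly (the paper delegates this to \cite[Proposition 2]{Amiri2017}), and obtain continuity from composition with the fixed proper maps $\psi,\psi^{-1}$ (the paper invokes Proposition \ref{top con}). Only a cosmetic slip: the domains and codomains of your auxiliary maps $\psi_*$ and $(\psi^{-1})^*$ are written the wrong way around, though the composite $C^\infty(G,G)\to C^\infty(G',G')$ you conclude with is the right one.
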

\begin{proof}
Note first that for $f\in F_H\big(\SG(\alpha)\big)$ the map $\Psi(f)=\psi\circ f\circ\psi^{-1} \colon G' \rightarrow G'$ satisfies $R_{\Psi (f)} (x) = x \psi \Big(f\big(\psi^{-1}(x)\big)\Big) = \psi \Big(R_f \big(\psi^{-1} (x)\big)\Big)$.
Since $\psi$ and $R_f$ are diffeomorphisms, $R_{\Psi (f)} \colon G' \rightarrow G'$ is a diffeomorphism, whence $\Psi (f) \in S_{G'} (\alpha')$.
Similarly,
\begin{align*}
R_{\Psi (f)}(K)
=\Big\{\psi\big(yf(y)\big) : y\in H\Big\} =\psi\big(R_f(H)\big)=\psi(H) =K.
\end{align*}
Hence the map $\Psi$ makes sense and maps $F_H\big(\SG(\alpha)\big)$ bijectively into $F_K\big(S_{G'}(\alpha')\big)$ (its inverse is given by $g \mapsto \psi^{-1} \circ g \circ \psi$).

Similar to \cite[Proposition 2]{Amiri2017} one can show that the map $\Psi$ is a group homomorphism.
Since $\psi,\psi^{-1}\in C^{\infty}(G,G)$ are diffeomorphism, they are proper maps.
Therefore by Proposition \ref{top con} the map $C^\infty (G,G) \rightarrow C^\infty (G',G'), f\mapsto \psi \circ f\circ \psi^{-1}$ is continous.
By the same argument, the inverse of $\Psi$ is also continuous. Hence $\Psi$ is a topological isomorphism.
\end{proof}

We hope to investigate the geometry of this action and its stabiliser subgroups in future work.

\subsection*{The subgroup of bisections}\label{sect: BIS} \addcontentsline{toc}{subsection}{The subgroup of bisections}
The Lie group $\SG (\alpha)$ is naturally related to the group $\Bis (\cG)$ of bisections of the Lie groupoid $\cG$.
Let us first recall the definition of this group.

\begin{defn}[{Bisections of a Lie groupoid \cite[Definition 1.4.1]{Mackenzie05}}]\label{defn: bis}
A \emph{bisection} of a Lie groupoid $\cG = (G\toto M)$ is a smooth map $\sigma\colon  M\rightarrow G$ which satisfies
$$ \beta \circ \sigma = \id_M \text{ and } \alpha \circ \sigma \in \Diff (M).$$
The set of all bisections $\Bis(\cG)$ is a group with respect to the group operations
\begin{equation}\label{eq: bis mult}
 (\sigma_1\star\sigma_2)(x)=\sigma_1(x)\sigma_2\big((\alpha\circ\sigma_1)(x)\big) \text{ and }\sigma^{-1}=\Big(\sigma\circ(\alpha\circ\sigma)^{-1}(x)\Big)^{-1} \quad \forall x \in M.
\end{equation}
\end{defn}

\begin{rem}
 \begin{enumerate}
  \item Note that we use the symbol ``$\star$'' both for the multiplication in $\Bis (\cG)$ and in $\SG$.
However, there should be no room for confusion as it will always be clear from the setup and the signature of the mappings which product is meant.
\item Switching the r\^{o}les of $\alpha$ and $\beta$ in Definition \ref{defn: bis} we obtain another group of bisections which is isomorphic to $\Bis (\cG)$.
 It is thus just a matter of convention which group is studied and we chose to define $\Bis (\cG)$ here to fit to the definition of $\SG$ in Definition \ref{defn: SG} (cf.\ \cite{Mackenzie05,MR3351079} where the opposite convention is adopted).
 \end{enumerate}
\end{rem}

To see the relation between $\Bis (\cG)$ and $\SG (\alpha)$, we have to adapt the definition of the left-translations used in \cite[Definition 1.4.1]{Mackenzie05} to right-translations.

\begin{defn}\label{right trans}
A \emph{right-Translation} of the Lie groupoid $\cG = (G \toto M)$ is a pair $(\varphi , \varphi_0) \in \Diff (G) \times \Diff (M)$, such that $\alpha\circ\varphi=\varphi_0\circ\alpha$, $\beta\circ\varphi=\beta$, and each $\varphi_x\colon  G_x\rightarrow G_{\varphi_0(x)}$ is $R_g$ for some $g\in G^x_{\varphi_0(x)}$, where $R_g(y)=yg$.
\end{defn}

\begin{ex}[{\cite[p.22]{Mackenzie05}}]\label{bis-rt}
For $\sigma \in \Bis (\cG)$, we obtain a right-translation $(R_\sigma , \alpha \circ \sigma)$, where
$$R_\sigma\colon G\rightarrow G,\quad  R_\sigma(g) \coloneq g\sigma(\alpha(g)).$$
Conversely if $(\varphi, \varphi_0)$ is a right-translation on $\cG$, then $\sigma \colon M\rightarrow G$ with $\sigma(x)=x^{-1}\varphi(x)$ defines a bisection of $\cG$.
\end{ex}

\begin{lem}\label{bis1}
Let $\cG = (G \toto M)$ be a Lie groupoids.
Every right-translation $(\varphi,\varphi_0)$ on $\cG$ defines an element $\psi (x) \coloneq x^{-1} \varphi(x), x\in G$ of $\SG(\alpha)$ which is constant on $G_u$ for every $u\in M$.
Finally, the restriction of $\psi$ to $M$ is a bisection.
\end{lem}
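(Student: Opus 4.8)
The plan is to verify the three assertions in order: that $\psi$ lands in $\SG(\alpha)$, that $\psi$ is constant on each source-fibre $G_u$, and that $\psi|_M$ is a bisection of $\cG$.

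\textbf{Step 1: $\psi \in \SG(\alpha)$.} First I would check $\psi$ is smooth: it is the composite $g \mapsto (g^{-1},\varphi(g)) \mapsto g^{-1}\varphi(g)$ of smooth maps (inversion $\iota$, the diffeomorphism $\varphi$, the diagonal, and partial multiplication $m$), so smoothness is immediate provided the expression makes sense. For composability one needs $\alpha(g^{-1}) = \beta(\varphi(g))$; since $\alpha(g^{-1}) = \beta(g)$ and $\beta\circ\varphi = \beta$ by Definition \ref{right trans}, this holds, so $\psi \in C^\infty(G,G)$. Next, $\beta\circ\psi = \alpha$: indeed $\beta(g^{-1}\varphi(g)) = \beta(g^{-1}) = \alpha(g)$. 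Finally $R(\psi) \in \Diff(G)$: by definition $R_\psi(g) = g\psi(g) = g\,g^{-1}\varphi(g) = \varphi(g)$, so $R(\psi) = \varphi \in \Diff(G)$. Hence $\psi \in \SG(\alpha)$.

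\textbf{Step 2: $\psi$ is constant on $G_u$.} Fix $u \in M$ and $g \in G_u$, i.e.\ $\alpha(g) = u$. The third condition in Definition \ref{right trans} says $\varphi|_{G_u} = \varphi_g$ is right translation $R_h$ by some $h \in G^u_{\varphi_0(u)}$, i.e.\ $\varphi(g) = gh$ for that fixed $h$ depending only on $u$. Then $\psi(g) = g^{-1}\varphi(g) = g^{-1}(gh) = h$, which is independent of the chosen $g \in G_u$. So $\psi$ is constant on $G_u$ with value $h = h(u) \in G$; note $\beta(h) = \beta(\psi(g)) = \alpha(g) = u$, consistent with $\psi(g) \in \beta^{-1}(u)$.

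\textbf{Step 3: $\psi|_M$ is a bisection.} Write $\sigma \coloneq \psi|_M \colon M \to G$, using the identification $M \subseteq G$ via the unit map. For $u \in M$ (viewed as the unit $1_u \in G$) we have $\alpha(1_u) = u$, so by Step 2, $\sigma(u) = \psi(1_u) = 1_u^{-1}\varphi(1_u) = \varphi(1_u)$ — equivalently $\sigma = \varphi|_M$, which is smooth. Then $\beta\circ\sigma = \beta\circ\varphi|_M = \beta|_M = \id_M$. For the condition $\alpha\circ\sigma \in \Diff(M)$: from $\alpha\circ\varphi = \varphi_0\circ\alpha$ we get $\alpha\circ\sigma = \alpha\circ\varphi|_M = \varphi_0\circ\alpha|_M = \varphi_0 \in \Diff(M)$. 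Hence $\sigma \in \Bis(\cG)$, and in fact $\sigma$ is precisely the bisection $x \mapsto x^{-1}\varphi(x)$ of Example \ref{bis-rt}.

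I do not expect a genuine obstacle here; the statement is essentially an unwinding of the definition of right-translation combined with the already-established smoothness facts from Lemma \ref{con3} and the submanifold structure of $\SG(\alpha)$. The only point requiring slight care is making sure all partial multiplications are between composable arrows (handled in Step 1 using $\alpha(g^{-1}) = \beta(g)$ and $\beta\circ\varphi = \beta$), and correctly tracking the unit-map identification $M \hookrightarrow G$ when restricting $\psi$ to $M$ in Step 3.
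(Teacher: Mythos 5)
Your proof is correct and follows essentially the same route as the paper's: composability and smoothness of $\psi$ via $\alpha(g^{-1})=\beta(g)=\beta(\varphi(g))$, membership in $\SG(\alpha)$ via $R_\psi=\varphi$, constancy on $G_u$ from $\varphi|_{G_u}=R_h$, and the bisection property from $\beta\circ\varphi=\beta$ and $\alpha\circ\varphi=\varphi_0\circ\alpha$. No gaps.
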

\begin{proof}
Consider a right-translation $(\varphi, \varphi_0)$ and define $\psi\colon  G\rightarrow G$ with $\psi(x)=x^{-1}\varphi(x)$.
Then $\beta\big(\varphi(x)\big)=\beta(x)=\alpha(x^{-1})$ implies that $\psi$ makes sense.
Since the maps $\varphi$, inversion and multiplication of $\cG$ are smooth, $\psi$ is smooth.
Also $\beta\circ\psi=\alpha$, that is $\psi\in\SG$, in fact $\psi\in\SG(\alpha)$, since $R_{\psi}=\varphi$ is a diffeomorphism.

For the second part of the assertion, let $u\in M \subseteq G$ and $g\in G_u$.
We observe that the restriction of $\varphi$ on $G_{u}$ is of the form $R_{\theta}$ for some $\theta\in G$.
Hence $\psi (g) = g^{-1} \varphi(g) = g^{-1} g \theta = \theta$ and $\psi|_{G_u}$ is constant.

Finally, $(\beta\circ \psi)(u)=\beta\big(u^{-1}\varphi(u)\big)=\beta(u)=u$ for every $u\in M$, that is $\psi|_M$ is a right-inverse for $\beta$.
Also $(\alpha\circ \psi)(u)=\alpha\big(\varphi(u)\big)=\varphi_0\big(\alpha(u)\big)=\varphi_0(u)$, hence $\alpha\circ \psi|_M=\varphi_0$ is a diffeomorphism. Therefore $\psi|_M$ is a bisection.
\end{proof}

\begin{rem}
 Note that every diffeomorphism $\varphi \colon G \rightarrow G$ with $\beta\circ \varphi=\beta$, induces an element $R^{-1}(\varphi)\colon G\rightarrow G, \psi(x)=x^{-1}\varphi(x)$ of $\SG(\alpha)$.
 However, the restriction of $R(\varphi)$ on $M$ may not be a bisection, see Example \ref{ex: PG} below.
\end{rem}

In the following we show that $\Bis(\cG)$ is isomorphic to a subgroup of $\SG(\alpha)$.

\begin{thm}\label{embedded}
Let $\cG = (G \toto M)$ be a Lie groupoid, then the map
\begin{equation}\label{eq: identBis}
 \Psi \colon \big(\Bis(\cG), \star\big) \rightarrow \big(\SG (\alpha),\star\big),\quad \sigma \mapsto \sigma \circ \alpha
\end{equation}
is a group monomorphism. If $\cG$ is in addition $\alpha$-proper, then
$\Psi$ is a morphism of Lie groups.   \label{thm: emb2}
\end{thm}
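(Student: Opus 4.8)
The plan is to verify three things in turn: that $\Psi$ is well-defined (lands in $\SG(\alpha)$), that it is an injective group homomorphism, and finally that it is smooth when $\cG$ is $\alpha$-proper.

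First I would check that $\Psi(\sigma) = \sigma \circ \alpha$ lies in $\SG$: since $\beta \circ \sigma = \id_M$ we get $\beta \circ (\sigma \circ \alpha) = \alpha$, so the target condition holds, and $\sigma \circ \alpha$ is smooth as a composite of smooth maps. To see it lies in $\SG(\alpha)$, I would compute $R(\sigma \circ \alpha)$: for $x \in G$, $R(\sigma\circ\alpha)(x) = x\,\sigma(\alpha(x)) = R_\sigma(x)$, which is exactly the right-translation of Example \ref{bis-rt}. Since $(R_\sigma, \alpha\circ\sigma)$ is a right-translation, $R_\sigma \in \Diff(G)$, hence $\sigma\circ\alpha \in \SG(\alpha)$. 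Injectivity follows because $\sigma$ can be recovered from $\sigma \circ \alpha$ by restriction to $M \subseteq G$ (as $\alpha|_M = \id_M$), i.e. $\sigma = (\sigma\circ\alpha)|_M$. For the homomorphism property I would unwind both products: on one side, $\Psi(\sigma_1 \star \sigma_2)(x) = (\sigma_1\star\sigma_2)(\alpha(x)) = \sigma_1(\alpha(x))\,\sigma_2\big(\alpha(\sigma_1(\alpha(x)))\big)$; on the other, $(\Psi(\sigma_1)\star\Psi(\sigma_2))(x) = \Psi(\sigma_1)(x)\,\Psi(\sigma_2)\big(x\,\Psi(\sigma_1)(x)\big) = \sigma_1(\alpha(x))\,\sigma_2\big(\alpha(x\sigma_1(\alpha(x)))\big)$, and then observe that $\alpha(x\sigma_1(\alpha(x))) = \alpha(\sigma_1(\alpha(x)))$ since $\alpha$ of a product equals $\alpha$ of the second factor. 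So the two expressions agree. (This essentially reproduces \cite[Proposition 2]{Amiri2017}, to which I would refer for the bookkeeping.)

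For the smoothness statement under $\alpha$-properness, the key point is that $\Psi = \alpha^* \circ \iota_{\Bis}$ in suitable terms: precomposition with the fixed proper smooth map $\alpha$. More precisely, $\Psi(\sigma) = \sigma \circ \alpha = \mathrm{Comp}(\sigma, \alpha)$, and since $\alpha$ is a proper map (by $\alpha$-properness) the pullback $\alpha^* \colon C^\infty(M,G) \to C^\infty(G,G)$, $\sigma \mapsto \sigma\circ\alpha$, is smooth by \ref{setup: comp} (continuity/smoothness of composition when the inner map is proper, cf.\ Proposition \ref{top con}). Restricting $\alpha^*$ to the submanifold $\Bis(\cG) \subseteq C^\infty(M,G)$ — using here the Lie group structure on $\Bis(\cG)$ assumed available, e.g.\ when $M$ is compact — and noting that it corestricts to the submanifold $\SG(\alpha) \subseteq C^\infty(G,G)$ (which it does by the first paragraph), smoothness of $\Psi$ follows; corestriction to a submanifold is legitimate by \cite[p.\ 10]{Glofun} since $\SG(\alpha)$ is a submanifold. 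Finally, since $\Psi$ is a smooth group homomorphism between Lie groups, it is automatically a morphism of Lie groups.

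The main obstacle is the smoothness of $\Psi$: everything hinges on precomposition with $\alpha$ being a smooth operation between the relevant mapping manifolds, which fails in general for non-proper inner maps — this is precisely why $\alpha$-properness is hypothesised. One must also be slightly careful that the source manifold for the mapping space changes from $M$ to $G$ under $\alpha^*$, so the relevant charts (canonical charts built from local additions, as in Appendix \ref{app: topo}) must be matched up correctly; but this is exactly the content of the smoothness-of-composition result \ref{setup: comp}, so no new work is needed beyond citing it. The purely algebraic parts (well-definedness, injectivity, homomorphism) are routine manipulations with $\alpha$, $\beta$ and the two $\star$-products.
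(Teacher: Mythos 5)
Your proposal is correct and follows essentially the same route as the paper: well-definedness via the identification $R(\sigma\circ\alpha)=R_\sigma\in\Diff(G)$ (the paper routes this through Lemma \ref{bis1} and Example \ref{bis-rt}), injectivity by restriction to $M$, the same direct computation for the homomorphism property using $\alpha(x\sigma_1(\alpha(x)))=\alpha(\sigma_1(\alpha(x)))$, and smoothness from \ref{setup: comp} applied to $\mathrm{Comp}(\cdot,\alpha)$ with $\alpha$ proper, combined with the submanifold structures on $\Bis(\cG)$ and $\SG(\alpha)$. No gaps.
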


\begin{proof}
Let $\sigma\in \Bis(\cG)$, then we consider the induced right-translation $(R_{\sigma},\alpha\circ\sigma)$ (cf.\ Example \ref{bis-rt}).
By the Lemma \ref{bis1} the right-translation $(R_{\sigma},\alpha\circ\sigma)$ produces an element $\psi_{\sigma}\in\SG(\alpha)$ by
\begin{equation}\label{ind: sga}
\psi_\sigma(x)=x^{-1}R_{\sigma}(x)=x^{-1}x\sigma\big(\alpha(x)\big)=\sigma(\alpha\big(x)\big).
\end{equation}
Lemma \ref{bis1} states that $\psi_\sigma|_M$ is a bisection, and indeed $\psi_\sigma|_M =\sigma$.
Therefore every bisection of $\cG$ can be extended smoothly to yield an element of $C^{\infty}(G,G)$ which belongs to $\SG(\alpha)$.
Obviously the map $\Psi\colon \Bis(\cG)\rightarrow \SG(\alpha)$ with $\Psi(\sigma)=\psi_\sigma$ is injective. It is also a group homomorphism.
To check this, let $g\in G$, $\sigma, \tau \in \Bis (\cG)$ and compute using the formulae for the multiplications in $\Bis(\cG)$ and $\SG(\alpha)$,
\begin{align*}
\big(\Psi(\sigma\star\tau)\big)(g)&=\psi_{\sigma\star\tau}(g) \stackrel{\eqref{ind: sga}}{=}(\sigma\star\tau)\big(\alpha(g)\big) \stackrel{\eqref{eq: bis mult}}{=}\sigma\big(\alpha(g)\big)\tau\Big(\big(\alpha\circ\sigma\big)\big(\alpha(g)\big)\Big) \\
&=\psi_\sigma(g) \tau\Big(\alpha \big(\sigma(\alpha(g))\big)\Big) =\psi_\sigma(g)\tau\Big(\alpha \big(g\sigma\big(\alpha(g)\big)\big)\Big) =\psi_\sigma(g)\psi_\tau\big(g \psi_\sigma(g)\big)\\
&=(\psi_\sigma\star \psi_\tau)(g) =\big(\Psi(\sigma)\star\Psi(\tau)\big)(g).\end{align*}
Therefore $\Psi(\sigma\star\tau)=\Psi(\sigma)\star\Psi(\tau)$ and thus $\Psi$ is a group monomorphims identifying $\Bis (\cG)$ with a subgroup of $\SG (\alpha)$.

To prove the last assertion, assume now that $\cG$ is an $\alpha$-proper Lie groupoid, then $\alpha$ is proper, whence  \ref{setup: comp} shows that $\Psi$ is smooth as a mapping to $C^\infty_{fS}(M,G)$.
Following \cite[Proposition A]{AS18} (cf.\ \cite{MR3351079,HS2016}), $\Bis (\cG)$ is a submanifold of $C^\infty_{fS} (M,G)$, whence $\Psi$ is smooth as a mapping to $\Bis (\cG)$.
\end{proof}
%
%
%

\begin{ex} (Pair groupoid)\label{ex: PG}
Let $M$ be a manifold and $G=M\times M$, then the Pair groupoid $P(M)$ is defined as follows: $\big((x,y),(z,w)\big)\in P(M)^{(2)}$ if and only if $z=y$ and $(x,y)(y,w)=(x,w)$, $(x,y)^{-1} = (y,x)$.
We have $\alpha(x,y)=y$ and $\beta(x,y)=x$ and units $(x,x), x\in M$.

For this Lie groupoid, the monoid $S_{P(M)}$ consists of the maps $$H\colon M\times M\rightarrow M\times M , \quad H(x,y)=\big(y,h(x,y)\big),\quad  h \in C^\infty (M\times M, M).$$
The group $S_{P(M)}(\alpha)$ consists of all $H\in S_{P(M)}$ where for every $x\in M$ the map $h(x,-)\colon M\rightarrow M, y\mapsto h(x,y)$ is a diffeomorphism.
It is easy to check that for $H\in S_{P(M)} (\alpha)$ the restriction of $H$ on $M$ is bisection if and only if the restriction of $h$ on the diagonal of $M\times M$ induces a diffeomorphism between $M$ and the diagonal.

If we take the submanifold $M=\mathbb{R} \setminus \{0\}$ of $\mathbb{R}$ and consider the pair groupoid $P(M) = (G \toto M)$, then $H\colon  G\times G \rightarrow G,(x,y) \mapsto (y,xy)$ is an element of $\SG(\alpha)$ whose its restriction $(x,x) \mapsto (x,x^2)$ on the base of $P(M)$ does not induce a diffeomorphism of the diagonal onto $M$. Thus $H$ does not restrict to a bisection.
\end{ex}

We have seen that the bisection group can be realised as a subgroup of $\SG (\alpha)$, though we do not know whether this group embedds in general into $\SG (\alpha)$ in a nice way.
However, since these two groups are related, we investigate in the next section the subset of elements which are contained in the graph of elements in $\SG$.

\section{Connecting subgroups of the unit group with Lie groupoids}\label{sect: subcon}

In this section we let $\cG = (G\toto M)$ be a Lie groupoid and we investigate subsets of $G \times G$ which arise as the union of graphs of elements in $\SG$.
Our interest here is twofold, in that we want to
\begin{enumerate}
 \item connect the monoid $\SG$ to the power set monoid of $G$,
 \item investigate the reconstruction of $\Gtwo$ (as a set) from graphs of $\SG$.
These questions are related to similar questions for (sub-)groups of bisections of the Lie groupoid $\cG$ (see e.g.\ \cite{ZHUO,MR3573833} and Remark \ref{rem: bis} below).
\end{enumerate}

However, as a first step let us establish a characterisation of elements in $\SG(\alpha)$ via their graphs.
Note that if $ f\in C^{\infty}(G,G)$, then $f\in \SG$ if and only if $\Gamma_f\subseteq \Gtwo$, where $\Gamma_f \coloneq \big\{\big(x,f(x)\big) \in G \times G \mid x \in G\big\}$ is the graph of $f$.
Remark \ref{G^2} shows that $(\Gtwo)^0 =\Gamma_{\alpha}$. It is well known, cf.\ \cite[p.\ 10]{Glofun} that the graph of a function $G \rightarrow G$ is a submanifold of $G\times G$, whence $\Gamma_f$ and $(\Gtwo)^0$ are submanifolds of $G\times G$ for every $f\in \SG$.
In the following we establish a new characterization of the elements of $\SG(\alpha)$.

\begin{prop}
Let $\cG = (G \toto M)$ be a Lie groupoid and $f\in \SG$.
Then $f\in \SG(\alpha)$ if and only if the submanifolds $\Gamma_f$ and $(\Gtwo)^0$ are diffeomorphic.
\end{prop}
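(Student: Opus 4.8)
I want to show that for $f \in \SG$, membership in $\SG(\alpha)$ is equivalent to $\Gamma_f$ and $(\Gtwo)^0 = \Gamma_\alpha$ being diffeomorphic submanifolds of $G\times G$. The natural candidate for a diffeomorphism connecting these two graphs is the restriction of a globally defined map on $\Gtwo$. Recalling Remark \ref{G^2}, the source map $\alpha^2(x,y) = (xy, \alpha(xy))$ sends $\Gtwo$ onto $(\Gtwo)^0 = \Gamma_\alpha$, and on the graph $\Gamma_f$ it restricts to $(x, f(x)) \mapsto (xf(x), \alpha(xf(x))) = (R_f(x), \alpha(R_f(x)))$. So the first step is to observe that $\alpha^2|_{\Gamma_f} \colon \Gamma_f \to \Gamma_\alpha$ is, under the obvious identifications $\Gamma_f \cong G$ (via $x \mapsto (x,f(x))$) and $\Gamma_\alpha \cong G$ (via $g \mapsto (g,\alpha(g))$), nothing but the map $R_f \colon G \to G$. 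Since $x \mapsto (x, f(x))$ and $g \mapsto (g, \alpha(g))$ are diffeomorphisms onto the respective submanifolds (graphs of smooth maps are embedded submanifolds diffeomorphic to the domain, as already noted in the excerpt), this identification is clean.

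\textbf{The easy direction.} If $f \in \SG(\alpha)$, then by definition $R_f = R(f) \in \Diff(G)$, so the composite
\[
\Gamma_f \xrightarrow{\ \cong\ } G \xrightarrow{\ R_f\ } G \xrightarrow{\ \cong\ } \Gamma_\alpha
\]
is a diffeomorphism; concretely $(x, f(x)) \mapsto (R_f(x), \alpha(R_f(x)))$, which is the restriction of the smooth map $\alpha^2$ and has smooth inverse $(g, \alpha(g)) \mapsto (R_f^{-1}(g), f(R_f^{-1}(g)))$. Hence $\Gamma_f$ and $(\Gtwo)^0$ are diffeomorphic.

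\textbf{The converse — the main obstacle.} Here I only get that \emph{some} diffeomorphism exists between the two submanifolds, and a priori it need not be the geometrically natural one $\alpha^2|_{\Gamma_f}$; so I cannot immediately read off that $R_f$ is invertible. The plan is instead to argue that the natural map $R_f \colon G \to G$ is itself already a bijection, hence (being a smooth map between manifolds which we can show is a local diffeomorphism, or by invariance of domain plus properness, depending on what is cheapest) a diffeomorphism. The cleanest route: show $R_f$ is injective and surjective directly. For surjectivity, given $g \in G$, one needs $x$ with $xf(x) = g$; the candidate is $x = g f(x)^{-1}$, but $f(x)$ depends on $x$ — so one uses the monoid structure instead. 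Recall from \ref{setup: mon:iso} that $R$ has a one-sided inverse formula $R^{-1}(\psi)(x) = x^{-1}\psi(x)$ at the level of the submonoid $R_\SG$; the real content is that invertibility of $R_f$ as a \emph{map} must be extracted. Actually the honest fix is: the hypothesis that $\Gamma_f \cong (\Gtwo)^0$ forces the two submanifolds to have, in particular, the same "size," and one should exploit that the \emph{only} natural comparison map between them is $\alpha^2|_{\Gamma_f}$ together with the fact (from Remark \ref{G^2}, $\alpha^2$ is the source map of the groupoid $\Gtwo$ and $(\Gtwo)^0$ is its base, so $\alpha^2$ restricted to the $\alpha^2$-fibre through a point, or to any "bisection-like" submanifold meeting each orbit once, is a bijection) that $\Gamma_f$ is precisely a subset meeting each $\alpha^2$-fibre in exactly one point iff $R_f$ is a bijection. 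So the argument I would actually write: the submanifold $\Gamma_f$ is a global section of the submersion-like map $\beta^2 \colon \Gtwo \to (\Gtwo)^0$, $\beta^2(x,y) = (x,\alpha(x))$ — indeed $\Gamma_f$ hits each $\beta^2$-fibre $\{x\}\times \alpha(x)$-stuff exactly once by definition of $\SG$ — and under this, $\Gamma_f \cong (\Gtwo)^0$ canonically via $\beta^2$ always (not just when $f$ is a unit!). Therefore the content of the proposition must really be about $\alpha^2$: I claim $\Gamma_f \cong (\Gtwo)^0$ via $\alpha^2|_{\Gamma_f}$ iff $\alpha^2|_{\Gamma_f}$ is bijective iff $R_f$ is bijective iff $f \in \SG(\alpha)$, and the missing step is to rule out that an \emph{abstract} diffeomorphism exists when $\alpha^2|_{\Gamma_f}$ fails to be one. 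I expect the author handles this by noting $\Gamma_f$ and $(\Gtwo)^0$ are each diffeomorphic to $G$ (via the graph identifications), so "diffeomorphic as submanifolds" just says $\dim$ and topology match — which is automatic — \emph{unless} "diffeomorphic" is meant in the strong sense "the inclusion-induced / natural identification is a diffeomorphism," i.e. via $\alpha^2$. Under that reading the proof is: $\Gamma_f \cong (\Gtwo)^0$ (naturally) $\iff \alpha^2|_{\Gamma_f}$ is a diffeomorphism $\iff$ its representative $R_f$ is a diffeomorphism $\iff f \in \SG(\alpha)$, and the one real verification is smoothness of the inverse, which follows from $R_f \in \Diff(G)$ and the smoothness of $f$, $m$, $\iota$ exactly as in Lemma \ref{con3}. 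I would therefore write the proof along these lines, making the identification $\Gamma_f \cong G$ explicit at the outset and phrasing the equivalence through $R_f$, flagging that the substantive point is that $\alpha^2|_{\Gamma_f}$ corresponds to $R_f$ under the graph charts.
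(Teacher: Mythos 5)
Your proof of the \emph{forward} direction is essentially the paper's own argument: the paper likewise works with $\alpha^2|_{\Gamma_f}$ and exhibits its inverse explicitly as $F\big(x,\alpha(x)\big)=\big(xg(x),f(xg(x))\big)$, where $g$ is the inverse of $f$ in $\big(\SG(\alpha),\star\big)$ --- under the graph identification $\Gamma_f\cong G$ this is exactly your $R_f^{-1}$ --- and deduces smoothness of both maps from smoothness of $m$, $f$ and $g$. So that half of your proposal matches the paper.

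Your objection to the \emph{converse} is well taken, and it is a defect the paper's proof does not actually resolve. As you observe, $\Gamma_f$ and $(\Gtwo)^0=\Gamma_\alpha$ are both graphs of smooth maps $G\rightarrow G$, hence each is diffeomorphic to $G$ via the first projection, hence they are abstractly diffeomorphic to each other for \emph{every} $f\in\SG$; read literally, the ``only if'' direction therefore fails whenever $\SG(\alpha)\subsetneq\SG$. The paper handles this direction solely by citing \cite[Proposition 7]{Amiri2017} for the statement that $\alpha^2$ restricts to a \emph{bijection} $\Gamma_f\rightarrow(\Gtwo)^0$ if and only if $f\in\SG(\alpha)$ --- a statement about the specific canonical map, not about the existence of some diffeomorphism --- and never reconciles this with the abstract phrasing of the proposition. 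Your proposed repair (read ``diffeomorphic'' as ``the canonical comparison map $\alpha^2|_{\Gamma_f}$, which corresponds to $R_f$ under the graph charts, is a diffeomorphism'') is the correct interpretation, and under it both directions go through exactly as you describe, since $R_f\in\Diff(G)$ is the definition of $f\in\SG(\alpha)$ and the smoothness of $R_f^{-1}$ follows from smoothness of $m$, $\iota$ and $g$ as in Lemma \ref{con3}. In short: your argument is the paper's where the paper has one, and where you hesitate you have correctly located a gap in the statement rather than in your proof.
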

\begin{proof} Consider the source map $\alpha^{2} \colon (\Gtwo) \rightarrow G \times M, (x,y) \mapsto \big(xy, \alpha (xy)\big)$ (cf.\ Remark \ref{G^2}). By \cite[Proposition 7]{Amiri2017}, $\alpha^2$ restricts to a bijection of $\Gamma_f$ to the base space of $\Gtwo$ if and only if $f\in \SG(\alpha)$.
Define
$$F\colon  (\Gtwo)^0\rightarrow \Gtwo, \quad F\big(x,\alpha(x)\big)=\Big(xg(x),f\big(xg(x)\big)\Big),$$ where $g\star f=\alpha$ (i.e.\ $g$ is the inverse of $f$ in $\big(\SG(\alpha), \star\big)$).
Obviously the image of $F$ is contained in $\Gamma_f$ and it is easy to check that $F$ is the inverse of $(\alpha^2)|_{\Gamma_f}$.
We deduce that $F$ is a bijection if $f \in \SG (\alpha)$.
The multiplication of $\cG$ and the maps $f, g$ are smooth, whence $(\alpha^2)|_{\Gamma_f}$ and its inverse $F$ are smooth.
In conclusion, $\Gamma_f$ is diffeomorphic to the base space of $\Gtwo$ which we identified with $(\Gtwo)^{0}$.
\end{proof}

\begin{cor}
The graphs of elements in $\SG(\alpha)$ are diffeomorphic.
\end{cor}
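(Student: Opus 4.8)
The plan is to read this off immediately from the preceding Proposition. First I would fix two elements $f,g \in \SG(\alpha)$. Applying the Proposition to $f$ provides a diffeomorphism $\Gamma_f \to (\Gtwo)^0$; indeed the proof of the Proposition even exhibits one explicitly, namely the restriction $(\alpha^2)|_{\Gamma_f}$, whose inverse is the smooth map $F$ constructed there from the inverse of $f$ in $(\SG(\alpha),\star)$. Applying the Proposition a second time, now to $g$, yields a diffeomorphism $\Gamma_g \to (\Gtwo)^0$. Composing the first with the inverse of the second gives a diffeomorphism $\Gamma_f \to \Gamma_g$, which is exactly the assertion.

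The only thing worth making explicit is that ``being diffeomorphic'' is an equivalence relation on the submanifolds of $G \times G$ — in particular it is symmetric and transitive — so that the common diffeomorphism type $(\Gtwo)^0$ shared by all the $\Gamma_f$, $f \in \SG(\alpha)$, forces any two of them to be diffeomorphic to one another. In other words, the Corollary is just the transitive closure of the Proposition over the family $\{\Gamma_f \mid f \in \SG(\alpha)\}$. I do not expect any genuine obstacle here: all the content sits in the Proposition, and if desired one could even write down the resulting diffeomorphism $\Gamma_f \to \Gamma_g$ concretely by chaining the explicit maps $\alpha^2|_{\Gamma_f}$ and $F$ from its proof, but this level of detail is not needed for the statement.
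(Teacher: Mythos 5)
Your argument is correct and is exactly the intended one: the paper states this as an immediate corollary (with no written proof) of the preceding Proposition, relying precisely on the transitivity you spell out — every $\Gamma_f$ with $f \in \SG(\alpha)$ is diffeomorphic to the common model $(\Gtwo)^0$, hence any two are diffeomorphic to each other. No gaps.
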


\begin{setup}[Relating $\SG$ and the power set monoid]\label{P(G)}
 For a groupoid $\cG = (G \toto M)$ we denote by $P(G)$ the powerset of $G$. Together with the multiplication of sets (for two subset $U, V$ of $G$, $U.V \coloneq \big\{xy \mid (x,y)\in (U\times V)\cap \Gtwo\big\}$) is a monoid.
 The base space $M$ is the unit of $P(G)$.
 Since $\Gtwo$ is again a groupoid, $P(\Gtwo)$ with the multiplication of sets is a monoid.
 If $f,g\in\SG$, then $\Gamma_f ,\Gamma_g\subseteq \Gtwo$. Therefore by the groupoid structure of $\Gtwo$ and by \cite[Proposition 8]{Amiri2017} $\Gamma_f.\Gamma_g=\Gamma_{f\star g}$.
\end{setup}
\begin{defn}
Let $\cG$ be a Lie groupoid. For every $A\subseteq\SG$, set $\Gamma(A)=\bigcup_{f\in A}\Gamma_f$.
 By the definition of $\SG$, $\Gamma(A)\subseteq \Gtwo$ for every $A\subseteq\SG$.
\end{defn}

\begin{lem}
If $A$ is a subgroup of $\SG(\alpha)$, then $\Gamma(A)$ forms a wide subgroupoid of $\Gtwo$.
\end{lem}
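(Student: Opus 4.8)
The plan is to verify directly the defining properties of a \emph{wide subgroupoid} of $\Gtwo$: that $\Gamma(A)$ contains all objects of $\Gtwo$, is closed under the partial multiplication, and is closed under inversion. Throughout I would use the explicit description of $\Gtwo$ from Remark \ref{G^2}: the object space is $(\Gtwo)^0 = \{(x,\alpha(x)) \mid x \in G\}$, a pair $\big((x,y),(z,w)\big)$ is composable precisely when $z = xy$, the product is then $(x,y)(xy,w) = (x,yw)$, and $(x,y)^{-1} = (xy,y^{-1})$. I expect no genuine obstacle; the only point requiring a little care is keeping track of the source/target conditions in $\cG$ so that all products written down ($xy$, $y^{-1}$, $y\cdot g(xy)$, etc.) are actually defined, which in each case follows from $f,g\in\SG$ via $\beta\circ f=\beta\circ g=\alpha$.

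First I would observe that wideness is immediate: since $A$ is a subgroup of $\SG(\alpha)$, it contains the unit $\alpha$ of $\SG$, and $\Gamma_\alpha = (\Gtwo)^0$ by Remark \ref{G^2} (as already noted before the preceding Proposition). Hence $(\Gtwo)^0 \subseteq \Gamma(A)$, so $\Gamma(A)$ contains every object of $\Gtwo$; in particular, for any $(x,y)\in\Gamma_f\subseteq\Gamma(A)$ the source $\alpha^2(x,y)=(xy,\alpha(xy))$ and target $\beta^2(x,y)=(x,\alpha(x))$ lie in $(\Gtwo)^0\subseteq\Gamma(A)$.

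For closure under multiplication I would invoke the identity $\Gamma_f\cdot\Gamma_g=\Gamma_{f\star g}$ recorded in \ref{P(G)} (equivalently \cite[Proposition 8]{Amiri2017}): given composable $a\in\Gamma_f$ and $b\in\Gamma_g$ with $f,g\in A$, the product $ab$ lies in $\Gamma_f\cdot\Gamma_g=\Gamma_{f\star g}\subseteq\Gamma(A)$ because $A$ is closed under $\star$. (An elementary check is equally short: writing $a=(x,f(x))$, composability forces $b=\big(xf(x),g(xf(x))\big)$, and then $ab=\big(x,f(x)\,g(xf(x))\big)=\big(x,(f\star g)(x)\big)\in\Gamma_{f\star g}$.) For closure under inversion, take $(x,y)\in\Gamma_f$ with $f\in A$ and let $g\in A$ be the $\star$-inverse of $f$; then $f\star g=\alpha$, i.e. $f(x)\,g(xf(x))=1_{\alpha(x)}$ for all $x\in G$ (using $M\subseteq G$). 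Substituting $y=f(x)$, so that $xf(x)=xy$ and $\beta(y)=\alpha(x)$, and left-multiplying by $y^{-1}$ gives $g(xy)=y^{-1}$; hence $(x,y)^{-1}=(xy,y^{-1})=(xy,g(xy))\in\Gamma_g\subseteq\Gamma(A)$.

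Putting these together, $\Gamma(A)$ contains all units of $\Gtwo$ and is stable under source, target, partial multiplication and inversion, so it is a wide subgroupoid of $\Gtwo$. The write-up is essentially a dictionary translating the group axioms for $A$ (identity $\alpha$, closure under $\star$, existence of $\star$-inverses) into the groupoid axioms for $\Gamma(A)$ inside $\Gtwo$, with the composability bookkeeping being the only thing to watch.
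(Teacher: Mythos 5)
Your proposal is correct and follows essentially the same route as the paper: wideness from $\alpha\in A$ and $\Gamma_\alpha=(\Gtwo)^0$, closure under multiplication via $\Gamma_f\cdot\Gamma_g=\Gamma_{f\star g}$, and closure under inversion via the identity $g(xf(x))=f(x)^{-1}$ for the $\star$-inverse $g$ of $f$. The only (harmless) difference is that you derive that last identity directly from $f\star g=\alpha$, whereas the paper cites it from \cite[Proposition 3]{Amiri2017}.
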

\begin{proof}
Note that $(G^2)^0=\big\{\big(x,\alpha(x)\big): x\in G\big\}=\Gamma_{\alpha}\subset \Gamma(A)$ whence the subset $\Gamma(A) \subseteq \Gtwo$ can only be made into a wide subgroupoid.
To prove that $\Gamma(A)$ is a subgroupoid of $\Gtwo$, it is enough to show that it is closed under multiplication and inverse.

Let $\gamma_1,\gamma_2\in \Gamma(A)$ and $(\gamma_1,\gamma_2)\in (\Gtwo)^{(2)}$. Then there exists $f_1, f_2\in A$ with $\gamma_1\in\Gamma_{f_1}, \gamma_2\in\Gamma_{f_2}$, hence $\gamma_1\gamma_2\in \Gamma_{f_1}.\Gamma_{f_2}=\Gamma_{f_1\star f_2}\subset\Gamma(A)$. Also if $\gamma=\big(x,f(x)\big)\in \Gamma(A)$ where $f\in A$, then $\gamma^{-1}=\big(xf(x),f(x)^{-1}\big)$ by the definition of inversion map on $\Gtwo$. Now if $g$ be the inverse of $f$ in $A$ and $y=xf(x)$, then by \cite[Proposition 3]{Amiri2017} $g(y)=(f(x))^{-1}$. Hence $\gamma^{-1} \in \Gamma_g\subset\Gamma(A)$.
\end{proof}

\begin{thm}\label{iso2}
Let $\cG = (G \toto M)$ and $\cG' = (G' \toto M)$ be two isomorphic Lie groupoids where $\psi \in \Diff (G,G')$ induces the isomorphism over the identity.
Let $H \toto M$ be a Lie subgroupoid of $\cG$ and $K\coloneq \psi(H)$, then $\psi \times \psi \colon G \times G \rightarrow G \times G$ induces a groupoid isomorphism
$$\Phi \colon \Gamma\Big(F_H\big(\SG(\alpha)\big)\Big) \rightarrow \Gamma\Big(F_K\big(S_{G'}(\alpha')\big)\Big),\quad \big(x,f(x)\big) \mapsto \big(\psi(x),\psi \circ f(x)\big).$$

\end{thm}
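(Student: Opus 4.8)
The plan is to realise $\Phi$ as the restriction of the groupoid isomorphism $\Gtwo \to (G')^{(2)}$ induced by $\psi$, and then to pin down its image by invoking Proposition \ref{Iso1}. So first I would record that $\psi \times \psi \colon G \times G \to G' \times G'$ restricts to a groupoid isomorphism $\psi^{(2)} \colon \Gtwo \to (G')^{(2)}$. As a Lie groupoid isomorphism over $\id_M$, the map $\psi$ satisfies $\alpha' \circ \psi = \alpha$, $\beta' \circ \psi = \beta$, $\psi(xy) = \psi(x)\psi(y)$ on composable pairs, $\psi(x^{-1}) = \psi(x)^{-1}$, and $\psi|_M = \id_M$ under the identifications $M \subseteq G$ and $M \subseteq G'$. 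Feeding these identities into the explicit formulas of Remark \ref{G^2} for the source map $\alpha^2$, the target map $\beta^2$, the partial multiplication $(x,y)(xy,w) = (x,yw)$ and the inversion $(x,y)^{-1} = (xy,y^{-1})$ of $\Gtwo$ shows that $\psi^{(2)}$ carries composable pairs to composable pairs, the object space $\Gamma_\alpha$ onto $\Gamma_{\alpha'}$, products to products and inverses to inverses; hence $\psi^{(2)}$ is a groupoid isomorphism, with inverse induced by $\psi^{-1}$ (which is again a Lie groupoid isomorphism over $\id_M$).

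Next comes the bookkeeping that identifies domain and codomain. For any $f \in \SG$, substituting $x' = \psi(x)$ in the definition of a graph gives $\psi^{(2)}(\Gamma_f) = \Gamma_{\psi \circ f \circ \psi^{-1}}$. By Proposition \ref{Iso1} the assignment $f \mapsto \psi \circ f \circ \psi^{-1}$ is a bijection of $F_H\big(\SG(\alpha)\big)$ onto $F_K\big(S_{G'}(\alpha')\big)$, so taking the union over all $f \in F_H\big(\SG(\alpha)\big)$ yields
$$\psi^{(2)}\Big(\Gamma\big(F_H(\SG(\alpha))\big)\Big) = \Gamma\big(F_K(S_{G'}(\alpha'))\big).$$
Both sides are wide subgroupoids of $\Gtwo$, respectively of $(G')^{(2)}$, by the Lemma preceding the theorem (applied to the subgroup $F_H\big(\SG(\alpha)\big)$ of $\SG(\alpha)$ and to $F_K\big(S_{G'}(\alpha')\big)$ of $S_{G'}(\alpha')$, the latter being a subgroup since $K = \psi(H)$ is a subgroupoid of $\cG'$). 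Since $\Phi$ is precisely $\psi^{(2)}$ restricted to the wide subgroupoid $\Gamma\big(F_H(\SG(\alpha))\big)$, it is a groupoid isomorphism onto $\Gamma\big(F_K(S_{G'}(\alpha'))\big)$; moreover continuity of $\Phi$ and of its inverse for the subspace topologies is inherited from $\psi\times\psi$ and $\psi^{-1}\times\psi^{-1}$, so $\Phi$ is even an isomorphism of topological groupoids.

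I do not expect a serious obstacle. The only point needing care is the first step: one must verify that $\psi$, which is a groupoid isomorphism for the \emph{ordinary} structures of $\cG$ and $\cG'$, is compatible with the \emph{non-standard} groupoid structure on $\Gtwo$ described in Remark \ref{G^2} — in particular with the formulas $\alpha^2(x,y) = (xy,\alpha(xy))$ and $(x,y)^{-1} = (xy,y^{-1})$, which entangle the two coordinates. Once this is checked, the rest follows directly from Proposition \ref{Iso1} and the definition of $\Gamma(A)$.
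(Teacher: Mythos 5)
Your proposal is correct and follows essentially the same route as the paper: both rest on the bijection $f\mapsto\psi\circ f\circ\psi^{-1}$ from Proposition \ref{Iso1}, the identity $\Phi(\Gamma_f)=\Gamma_{\Psi(f)}$, and the multiplicativity of $\psi$ to check compatibility with the $\Gtwo$-structure. The only (harmless) difference is packaging: you first verify that $\psi\times\psi$ is a groupoid isomorphism of the full ambient groupoids $\Gtwo\to (G')^{(2)}$ and then restrict to the wide subgroupoids $\Gamma\big(F_H(\SG(\alpha))\big)$, whereas the paper carries out the same computations directly on elements of the graphs.
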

\begin{proof}
Let $\Psi\colon F_H\big(\SG(\alpha)\big)\rightarrow F_K\big(S_{G'}(\alpha')\big)$ be the group isomorphism constructed in Proposition \ref{Iso1}. Indeed $\Phi\big(x,f(x)\big)=\Big(\psi(x),\Psi(f)(\psi(x))\Big)$ for every $f\in F_H\big(\SG(\alpha)\big)$  and $x\in H$, whence $\Phi$ makes sense. Note that by construction $\Phi (\Gamma_f) = \Gamma_{\Psi (f)}$ for all $f \in F_H\big(\SG(\alpha)\big)$.
Clearly $\Phi$ is bijection (whose inverse is the restriction of $\psi^{-1} \times \psi^{-1}$).

If $\gamma=\big(x,f(x)\big)\in\Gamma_f$ and $\eta=\big(y,g(y)\big)\in\Gamma_g$ are two elements of
$\Gamma\Big(F_H\big(\SG(\alpha)\big)\Big)$ with $(\gamma,\eta)\in(H \times H)^{(2)}$, we have $\eta=\big(xf(x),g(xf(x))\big)$, $\Phi(\gamma)=\Big(\psi(x),\psi\big(f(x)\big)\Big)$
 and $\Phi(\eta)=\Big(\psi\big(xf(x)\big),\big(\Psi(g)\big)\big(\psi(xf(x))\big)\Big)=\Big(\psi\big(xf(x)\big),\psi\big(g(xf(x))\big)\Big)$.
 But $\psi$ and $\psi^{-1}$ are groupoid homomorphisms, whence $\psi(x)\psi\big(f(x)\big)=\psi\big(xf(x)\big)$ and $\psi^{-1} \big(xf(x)\big)= \psi^{-1} (x) \psi^{-1} \big(f(x)\big)$.
 Therefore $\big(\Phi(\gamma),\Phi(\eta)\big)\in (K\times K)^{(2)}$. Finally
\begin{align*}
 \Phi(\gamma)\Phi(\eta)&=\Big(\psi(x),\psi\big(f(x)\big)\Big)\Big(\psi\big(xf(x)\big),\psi\big(g(xf(x))\big)\Big)\\
 &=\Big(\psi(x),\psi\big(f(x)\big)\psi\big(g(xf(x))\big)\Big) =\Phi \Big(x,f(x)g\big(xf(x)\big)\Big)\\
 &= \Phi \big(x,f\star g(x)\big)
=\Phi(\gamma\eta) \qedhere
 \end{align*}
\end{proof}

\begin{ex}
  Let $G$ be a Lie group and $\cG = (G \toto \ast)$, then as a set $\SG$ is equal to $C^{\infty}(G,G)$ and also $(\SG,\star)$ and $\big(C^{\infty}(G,G),\circ_\op\big)$ are two isomorphic monoids (cf.\ also Remark \ref{rem: gpbd}).

  Further, $\SG(\alpha)$ is isomorphic to $\Diff (G)$. Therefore in this case $\Gamma(\SG)=G\times G=\Gtwo$ and if $G$ is connected then the Homogeneity Lemma \cite[p.22]{MR0226651} implies that $\Gamma(\SG(\alpha))=G\times G=\Gtwo$.
   We note that $\SG(\alpha) \cap \Diff (G)$ consists of all smooth map $f\in C^{\infty}(G,G)$ for which $f$ and $R_f$ are diffeomorphisms. The authors do not know whether in general one has $\SG(\alpha)\cap \Diff(G)=\emptyset$ or not. Note however, that this set does not inherit a useful structure from either group (as it is easy to prove that for example the unit elements of both groups can not be contained in the intersection).
\end{ex}

\begin{rem}\label{rem: bis}
If $\sigma\in \Bis(\cG)$, then $\Gamma_{\sigma}\subseteq (M\times G)\cap \Gtwo$.
In \cite{ZHUO} (cf.\ also the later \cite[Proposition 2.7]{MR3573833}) the authors proved that if $G$ is a  $\beta-$connected Lie groupoid, then for every $g\in G$ there exists a bisection $\sigma\in \Bis(\cG)$ with $\sigma\big(\beta(g)\big)=g$, hence $\bigcup_{\sigma\in \Bis(\cG)}\Gamma_{\sigma}=(M\times G)\cap \Gtwo$.
\end{rem}

The previous remark leads to the following question.
\begin{ques}
For which Lie groupoids does one have $\Gamma(\SG)=\Gtwo$?
More general, is it possible to characterize the minimal subset $A$ of $\SG$ which satisfies in $\Gamma(A)=\Gtwo$?
If $\SG$ satisfies $\Gamma(\SG)=\Gtwo$, does this imply $\Gamma \big(\SG(\alpha)\big) = \Gtwo$?
\end{ques}
Consider the following two example with respect to the above question.
\begin{ex}
\begin{enumerate}
\item
It is well known (see e.g.\ \cite[Homogeneity Lemma, p.22]{MR0226651}) that if $M$ is a connected smooth manifold then for any two points $x, y\in M$ there exists a diffeomorphism $\phi\colon  M\rightarrow M$ such that $\phi(x)=y$. Therefore for the pair groupoid $P(M)$ of a connected smooth manifold $M$ for $\big((x_0,y_0)(y_0,z_0)\big)\in P(M)^{(2)}$ there exists a diffeomorphism $\phi_0\colon  M\rightarrow M$ such that $\phi_0(y_0)=z_0$.
Then Example \ref{ex: PG} shows that $F\colon M \times M\rightarrow M \times M$ with $F(x,y)=\big(y,\phi_0(y)\big)$ belongs to $S_{P(M)} (\alpha)$  and $\big((x_0,y_0)(y_0,z_0)\big)\in\Gamma_F$. Therefore $\Gamma\big(S_{P(M)}(\alpha)\big)=P(M)^{(2)}$.

Conversely one can argue as in \cite[Remark 2.18 b)]{MR3573833} to obtain: For a disconnected manifold $M$ where the connected componennts are not diffeomorphic, one has $\Gamma \big(S_{P(M)} (\alpha)\big) \neq P(M)^{(2)}$.
\item If a Lie group $H$ acts on a manifold $M$ and $G=H\times M$. We let $\cG_{H\ltimes M}$ be the associated action groupoid (where the partial multiplication is given by $(g,hx) (h,x) \coloneq (gh, x)$.
Then for every $h_0, k_0\in H$ there exists a diffeomorphism $\phi_0\in \Diff(H)$ with $\phi_0(h_0)=k_0$. Therefore let $\big((h_0,x_0)(k_0,y_0)\big)\in \cG_{H\ltimes M}^{(2)}$, if we define $F\colon  G\rightarrow G$ with $F(h,x)=\Big(\phi_0(h),\big(\phi_0(h)\big)^{-1}.x\Big)$ then it is easy to check that $F\in S_{\cG_{H\ltimes M}}$ and $\big((h_0,x_0)(k_0,y_0)\big)\in\Gamma_F$, hence $\Gamma(S_{\cG_{H \ltimes M}})=\cG_{H \ltimes M}^{(2})$. The authors do not know whether $\Gamma\big(S_{\cG_{H\ltimes M}}(\alpha)\big)=\cG_{H\ltimes M}^{(2)}$.
\end{enumerate}
\end{ex}

So far, the results on $\Gamma (A)$ have been purely algebraic. In general it is not clear whether $\Gamma(A)$ carries a submanifold structure.
However, one can prove the following.

\begin{lem}\label{closed}
If $A\subseteq\SG$ is a compact set, then $\Gamma(A)$ is a closed set in $\Gtwo$.
\end{lem}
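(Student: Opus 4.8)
The plan is to show that $\Gamma(A)$ contains all of its limit points inside $\Gtwo$. Since $C^\infty_{fS}(G,G)$ need not be metrizable, I would argue with nets: let $\big((x_\lambda,y_\lambda)\big)_{\lambda}$ be a net in $\Gamma(A)$ converging to some $(x,y)\in\Gtwo$, and write $y_\lambda=f_\lambda(x_\lambda)$ with $f_\lambda\in A$. The goal is to produce an $f\in A$ with $(x,y)=\big(x,f(x)\big)$, which then exhibits $(x,y)\in\Gamma_f\subseteq\Gamma(A)$.

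First I would invoke compactness of $A$: the net $(f_\lambda)_\lambda$ admits a subnet $(f_{\lambda_\mu})_\mu$ converging in the fine very strong topology to some $f\in A$. Passing to this subnet we still have $x_{\lambda_\mu}\to x$ and $y_{\lambda_\mu}\to y$. The key step is the joint continuity of the evaluation map
\[
  \ev\colon C^\infty_{fS}(G,G)\times G\rightarrow G,\qquad (g,z)\mapsto g(z).
\]
This holds because $G$ is a finite-dimensional manifold, hence locally compact, and the fine very strong topology refines the compact-open topology (see Appendix \ref{app: topo}), for which evaluation on a locally compact domain is jointly continuous; separate continuity of point evaluations is recalled in Corollary \ref{cor: ev}, but here the full joint statement is what is needed. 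Applying it to the convergent net $(f_{\lambda_\mu},x_{\lambda_\mu})\to(f,x)$ yields $y_{\lambda_\mu}=f_{\lambda_\mu}(x_{\lambda_\mu})\to f(x)$. Since $G$ is Hausdorff, limits are unique, so $y=f(x)$, and therefore $(x,y)=\big(x,f(x)\big)\in\Gamma_f\subseteq\Gamma(A)$. Hence $\Gamma(A)$ is closed in $\Gtwo$.

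The main obstacle is precisely the joint continuity of $\ev$: separate continuity in each variable is easy and already recorded, but the argument genuinely requires continuity on the product $C^\infty_{fS}(G,G)\times G$, which is where local compactness of $G$ and the position of the fine very strong topology above the compact-open topology enter. As an alternative, net-free route one can argue pointwise: given $(x_0,y_0)\notin\Gamma(A)$, the continuous map $A\to G$, $f\mapsto f(x_0)$ has compact image not containing $y_0$; separating $y_0$ from this image by disjoint open sets and combining joint continuity of $\ev$ with a tube-lemma argument (using compactness of $A$) produces a neighbourhood of $(x_0,y_0)$ in $\Gtwo$ disjoint from $\Gamma(A)$, giving the same conclusion.
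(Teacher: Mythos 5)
Your proposal is correct and follows the same overall strategy as the paper: take a net in $\Gamma(A)$ converging in $\Gtwo$, use compactness of $A$ to pass to a subnet of the $(f_\lambda)$ converging in the fine very strong topology, and then show $f_{\lambda}(x_{\lambda})\to f(x)$. The only difference lies in how that last convergence is justified. The paper argues via the compact-open topology: fine very strong convergence implies uniform convergence on compact sets, which is then applied to the set $\{x_\gamma\}\cup\{x\}$. You instead invoke joint continuity of $\ev\colon C^\infty_{fS}(G,G)\times G\to G$ applied to the convergent net $(f_{\lambda_\mu},x_{\lambda_\mu})\to (f,x)$. Two remarks on this. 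First, the ``main obstacle'' you identify is not an obstacle at all: Corollary \ref{cor: ev} of the paper already states \emph{joint} continuity of $\ev$ (it is deduced from the continuity of $\mathrm{Comp}$ on $C^\infty(Y,Z)\times\Prop(X,Y)$, and in fact $\ev$ is even smooth by \ref{setup: comp}), so you may simply cite it rather than re-derive it from local compactness. Second, your route is arguably the cleaner one: the paper's appeal to compactness of $\{x_\gamma\}_{\gamma\in I}\cup\{x\}$ is unproblematic for sequences but not literally valid for arbitrary nets (the value set of a convergent net need not be relatively compact), whereas going through the jointly continuous evaluation map avoids this issue entirely. Your alternative ``tube-lemma'' sketch at the end is also sound but unnecessary once Corollary \ref{cor: ev} is in hand.
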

\begin{proof}
Let $(x_{\gamma},y_{\gamma})_{\gamma\in I}$ be a net in $\Gamma(A)$ which convergence to $(x,y) \in \Gtwo$. Then for every $\gamma\in I$ there exists $f_\gamma\in A$ with $y_{\gamma}=f_{\gamma}(x_\gamma)$. Compactness of $A$ allows us to replace the net by a subnet which converges. Hence without loss of generality we can suppose that the net $(f_\gamma)$ is convergent in the fine very strong topology to a function $f\in A$. Since the fine very strong topology in finer that the compact-open topology, $(f_\gamma)$ uniformly converges to $f$ on compact subsets of $G$. Since the set $\{x_\gamma\}_{\gamma\in I}\cup\{x\}$ is compact, therefore $f_{\gamma}(x_{\gamma})$ converges to $f(x)$. Hence $y=f(x)$ and consequently $(x,y)\in\Gamma_f$.
\end{proof}
\begin{ques}
Is it possible to characterize all subsets $A$ of $\SG$ which $\Gamma(A)$ is a submanifold of the manifold $G\times G$?
\end{ques}

\section*{Acknowledgements}

This research was partially supported by the European Unions Horizon 2020 research and innovation programme under the Marie Sk\l{}odowska-Curie grant agreement No.\ 691070.
We thank D.M.\ Roberts (Adelaide) for interesting discussions leading to Lemma \ref{staceyroberts}.

\appendix

\section{Locally convex manifolds and spaces of smooth maps}\label{Appendix:
MFD}

In this appendix we collect the necessary background on the theory of manifolds
that are modelled on locally convex spaces and how spaces of smooth maps can be
equipped with such a structure. Let us first recall some basic facts concerning
differential calculus in locally convex spaces. We follow
\cite{hg2002a,BGN04}.

\begin{defn}\label{defn: deriv} Let $E, F$ be locally convex spaces, $U \subseteq E$ be an open subset,
$f \colon U \rightarrow F$ a map and $r \in \N_{0} \cup \{\infty\}$. If it
exists, we define for $(x,h) \in U \times E$ the directional derivative
$$df(x,h) \coloneq D_h f(x) \coloneq \lim_{t\rightarrow 0} t^{-1} \big(f(x+th) -f(x)\big).$$
We say that $f$ is $C^r$ if the iterated directional derivatives
\begin{displaymath}
d^{(k)}f (x,y_1,\ldots , y_k) \coloneq (D_{y_k} D_{y_{k-1}} \cdots D_{y_1}
f) (x)
\end{displaymath}
exist for all $k \in \N_0$ such that $k \leq r$, $x \in U$ and
$y_1,\ldots , y_k \in E$ and define continuous maps
$d^{(k)} f \colon U \times E^k \rightarrow F$. If $f$ is $C^\infty$ it is also
called smooth. We abbreviate $df \coloneq d^{(1)} f$ and for curves $c \colon I \rightarrow M$ on an interval $I$, we also write $\dot{c} (t) \coloneq \frac{\dd}{\dd t} c (t) \coloneq dc(t,1)$.
\end{defn}

\begin{defn}[Differentials
on non-open sets]\label{defn: nonopen}
\begin{enumerate}
\item A subset $U$ of a locally convex space $E$ is \emph{locally
convex} if every $x \in U$ has a convex neighbourhood $V$ in $U$.
\item Let $U\subseteq E$ be a locally convex subset with dense interior and $F$ a locally convex space.
A continuous mapping $f \colon U \rightarrow F$ is called $C^r$ if
$f|_{U^\circ} \colon U^\circ \rightarrow F$ is $C^r$ and each of the
maps $d^{(k)} (f|_{U^\circ}) \colon U^\circ \times E^k \rightarrow F$
admits a continuous extension
$d^{(k)}f \colon U \times E^k \rightarrow F$ (which is then necessarily
unique).
\end{enumerate}
\end{defn}

Note that for $C^r$-mappings in this sense the chain rule holds.
Hence there is an associated concept of locally
convex manifold, i.e., a Hausdorff space that is locally homeomorphic to open
subsets of locally convex spaces with smooth chart changes. See
\cite{neeb2006,hg2002a}
for more details.

\begin{setup}[$C^r$-Manifolds and $C^r$-mappings between them]
For $r \in \N_0 \cup \{\infty\}$, manifolds modelled on a fixed locally convex space can be defined as usual.
The model space of a locally convex manifold and the manifold as a topological space will always be assumed to be Hausdorff spaces.
However, we will not assume that infinite-dimensional manifolds are second countable or paracompact.
We say $M$ is a \emph{Banach} (or \emph{Fr\'echet}) manifold if all its modelling spaces are Banach (or
Fr\'echet) spaces.
\end{setup}
\begin{setup}
Direct products of locally convex manifolds, tangent spaces and tangent bundles as well as $C^r$-maps between manifolds may be defined as in the finite-dimensional setting.
Furthermore, we define \emph{locally convex Lie groups} as groups with a $C^\infty$-manifold structure turning the group operations into $C^\infty$-maps.
\end{setup}

 \section{Topology and manifold structures on spaces of smooth functions} \label{app: topo}

 In this section we recall some essentials concerning the topology on the space of smooth mappings $C^\infty (X,Y)$, where $X, Y$ are finite-dimensional (not necessarily compact) manifolds.
 It is well-known that on non-compact manifolds the compact-open $C^\infty$-topology is not strong enough to control the behaviour of smooth functions. Instead one has to use a finer topology which allows one to control the behaviour of functions on locally finite covers of the manifold by compact sets. To this end let us briefly recall the construction of this topology (cf.\ e.g.\ \cite{michor1980,HS2016}).

 \begin{setup}[Conventions]
 In the following $X$, $Y$ and $Z$ denote smooth paracompact and finite-dimensional manifolds.
  For a multiindex $\{i_1, \ldots i_n\}$ and $f \colon \mathbb{R}^n \supseteq U \rightarrow \mathbb{R}^m$ (with $U \subseteq \mathbb{R}^n$ a locally convex subset with dense interior) sufficiently differentiable, we denote by $\frac{\partial}{\partial x_{i_1} \ldots \partial x_{i_n}} f$ the iterated partial derivative (where the $x_i$ are derivation in $i$th coordinate direction).
 \end{setup}

 \begin{defn}[Elementary neighborhood]
	\label{elementarynbh}
	Let $r \in \N_0$ and $f \colon X \to Y$ smooth, $(U,\phi)$ a chart on $X$, $ (V,\psi) $ a chart on $ Y $, $ A \subseteq U $ compact such that $ f(A) \subseteq V $, and $ \epsilon > 0 $. Define the set $\mathcal{N}^r \big(f; A,(U,\phi),(V,\psi),\epsilon \big)$ as
	\begin{align*}
	\left\lbrace \vphantom{\left\Vert \frac{\partial^j}{\partial x_{s_1} \ldots \partial x_{s_j}}\right.} h \in C^\infty (M,E) \middle|\right. & \quad  h(A) \subseteq V, \\ &\left.\sup_{a \in A} \sup_{0 \leq j \leq r} \left\Vert \frac{\partial^j}{\partial x_{s_1} \ldots \partial x_{s_j}} \left(\psi \circ h \circ \phi^{-1} - \psi \circ f \circ \phi^{-1}\right) (a) \right\Vert < \epsilon \right\rbrace .
	\end{align*}
	We call this set an \emph{elementary ~$C^r$-neighborhood of }~$ f $ in ~$ C^\infty (X,Y) $.
\end{defn}

\begin{defn}[Basic neighborhood]
	\label{basicnbh}
	Let $ f \colon X \to Y $ be a smooth map. A \emph{basic neighborhood of $f$ in ~$ C^\infty (X,Y) $} is a set of the form
	\begin{displaymath}
	\bigcap_{i \in \Lambda } \mathcal{N}^{r_i} \big(f; A_i,(U_i, \phi_i),(V_i,\psi_i),\epsilon_i\big),
	\end{displaymath}
	where $ \Lambda $ is a possibly infinite indexing set, for all \( i \) the other parameters are as in Definition \ref{elementarynbh}, and $ \lbrace A_i \rbrace_{i \in \Lambda} $ is locally finite.
\end{defn}

 \begin{defn}[Very strong topology]
	The \emph{very strong topology on $ C^\infty (X,Y) $} is the topology with basis the basic neighborhoods in $ C^\infty (X,Y) $ and we denote the space $C^\infty (X,Y)$ with the very strong topology will by $C^\infty_{vS}(X,Y)$.
\end{defn}

One can prove that many natural operations performed on mapping spaces become continuous with respect to the very strong topology.
However, if one wants to consider $C^\infty (X,Y)$ as an infinite-dimensional manifold a stronger topology is needed.
This is achieved by considering the following refinement.

\begin{defn}[The fine very strong topology]
	Define an equivalence relation $ \sim $ on $ C^\infty (X,Y) $ by declaring that
	$$ f \sim g \quad :\Leftrightarrow \quad \overline{\big\lbrace y \in M \mid f(y) \neq g(y) \big\rbrace} \text{ is compact } .$$ Now refine the very strong topology on $ C^\infty (X,Y) $ by demanding that the equivalence classes are open in $ C^\infty (X,Y) $. In other words, equip $ C^\infty (X,Y) $ with the topology generated by the very strong topology and the equivalence classes. This is the \emph{fine very strong topology} on $ C^\infty (X,Y) $. We write $ C^\infty_{fS}(X,Y) $ for $ C^\infty (X,Y) $ equipped with the fine very strong topology.
\end{defn}

\begin{rem}
 In \cite{michor1980} the very strong topology is called $\mathcal{D}$-topology and the fine very strong topology is called $\cF\cD$-topology.
 One can prove that the $\cF\cD$-topology turns $C^\infty (X,Y)$ into a smooth (infinite-dimensional) manifold.
\end{rem}

Foremost our interest is in the following facts concerning the continuity of the composition of mappings with respect to the topology.

\begin{defn}
 Recall that a $f\in C^{\infty}(X,Y)$ is \emph{proper} if and only if the preimage of a compact set $K$ in $Y$ is compact in $X$.
 Further the set $\Prop(X, Y)\subseteq C^{\infty}(X,Y)$ is open in the (fine) very strong topology.
\end{defn}

\begin{prop}[{\cite[Theorem 2.5 and Proposition 2.7]{HS2016}}]\label{top con}
Endow $C^\infty (X,Y)$ and $\Prop (X,Y)$ either with the very strong topology or the fine very strong topology.
 Let $f \colon Y \rightarrow Z$ be a smooth map, then
 $$  f_* \colon  C^{\infty}(X,Y)\rightarrow C^{\infty}(X,Z),\ \ h\mapsto f\circ h \quad \text{ for } f \in C^\infty (Y,Z) $$
 is continuous. Further, the composition map
 \begin{align*}
  \mathrm{Comp}\colon  C^{\infty}(Y,Z) \times \Prop(X, Y )\rightarrow C^{\infty}(X, Z) , \quad ( f , g)\mapsto f\circ g
 \end{align*}
 is continuous.
\end{prop}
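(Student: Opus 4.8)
The plan is to prove continuity with respect to the very strong topology first and then observe that the refinement to the fine very strong topology is essentially free. Throughout I abbreviate "lying in $\mathcal{N}^r\big(\cdot;A,(U,\phi),(V,\psi),\epsilon\big)$" by saying a map is "$C^r$-controlled to precision $\epsilon$ on $A$ in the charts $(\phi,\psi)$". For the pushforward, fix $h\in C^\infty(X,Y)$ and a basic neighbourhood $\mathcal{W}=\bigcap_{i\in\Lambda}\mathcal{N}^{r_i}\big(f\circ h;A_i,(U_i,\phi_i),(W_i,\chi_i),\epsilon_i\big)$ of $f\circ h$ with $\{A_i\}$ locally finite in $X$. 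For each $i$ the set $h(A_i)$ is compact and $f(h(A_i))\subseteq W_i$; using continuity of $f$ and compactness I would cover $h(A_i)$ by finitely many charts $(V_{ij},\psi_{ij})$ of $Y$ with $f(V_{ij})\subseteq W_i$ and split $A_i$ into finitely many compact pieces $A_{ij}\subseteq U_i$ with $h(A_{ij})\subseteq V_{ij}$ (a compact refinement of the cover $\{h^{-1}(V_{ij})\cap U_i\}$ of $A_i$). Writing $\widehat f_{ij}\coloneq\chi_i\circ f\circ\psi_{ij}^{-1}$, the chain rule (the Fa\`{a} di Bruno formula) expresses the partial derivatives of $\chi_i\circ f\circ g\circ\phi_i^{-1}=\widehat f_{ij}\circ(\psi_{ij}\circ g\circ\phi_i^{-1})$ up to order $r_i$ as universal polynomials in the derivatives of $\widehat f_{ij}$ along $\psi_{ij}\circ g\circ\phi_i^{-1}$ and of $\psi_{ij}\circ g\circ\phi_i^{-1}$ itself; since all of these stay bounded on the relevant compacta once $g$ is $C^0$-close to $h$, a routine uniform estimate yields $\delta_{ij}>0$ such that every $g$ which is $C^{r_i}$-controlled to precision $\delta_{ij}$ on $A_{ij}$ in $(\phi_i,\psi_{ij})$ has $f\circ g\in\mathcal{N}^{r_i}\big(f\circ h;A_{ij},(U_i,\phi_i),(W_i,\chi_i),\epsilon_i\big)$. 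As $\{A_{ij}\}$ is again locally finite, intersecting these conditions produces a basic neighbourhood of $h$ carried by $f_*$ into $\mathcal{W}$, so $f_*$ is very-strongly continuous.

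To pass to the fine very strong topology, note that if $\overline{\{x:g(x)\neq h(x)\}}$ is compact then $\overline{\{x:f(g(x))\neq f(h(x))\}}$ is compact too, being closed in it; hence $f_*$ maps the $\sim$-class of $h$ into the $\sim$-class of $f\circ h$. A very-strongly continuous map that pulls equivalence classes back into equivalence classes is fine-very-strongly continuous, so $f_*$ is $fS$-continuous as well; the identical remark will finish off the composition map at the end.

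For $\mathrm{Comp}$, fix $(f,g)\in C^\infty(Y,Z)\times\Prop(X,Y)$ and a basic neighbourhood $\mathcal{W}=\bigcap_{i\in\Lambda}\mathcal{N}^{r_i}\big(f\circ g;A_i,(U_i,\phi_i),(W_i,\chi_i),\epsilon_i\big)$ of $f\circ g$. The key point is the claim: \emph{if $g$ is proper and $\{A_i\}$ is locally finite in $X$, then $\{g(A_i)\}$ is locally finite in $Y$} (given $y\in Y$ pick a compact neighbourhood $L$; then $g^{-1}(L)$ is compact, so it meets only finitely many $A_i$, and $g(A_i)\cap L^\circ\neq\emptyset$ forces $A_i\cap g^{-1}(L)\neq\emptyset$). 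Granting this, I would swell $\{g(A_i)\}$ — using paracompactness and local compactness of $Y$ — to a locally finite family of compact neighbourhoods $B_i\supseteq g(A_i)$, cover each $B_i$ by finitely many charts of $Y$ and $Z$, and build a neighbourhood $\mathcal{V}$ of $g$ (intersected with the open set $\Prop(X,Y)$) giving $C^{r_i}$-control on each $A_i$ to a precision $\delta_i$, plus enough $C^0$-control that $g'(A_i)\subseteq B_i^\circ$ for $g'\in\mathcal{V}$, together with a neighbourhood $\mathcal{U}$ of $f$ giving $C^{r_i}$-control of $f$ on each $B_i$ to precision $\delta_i$ along the finite chart covers. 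For $(f',g')\in\mathcal{U}\times\mathcal{V}$ I would estimate on each $A_i$, in the charts and in the $C^{r_i}$-seminorm,
\[
\big|\chi_i\circ f'\circ g'\circ\phi_i^{-1}-\chi_i\circ f\circ g\circ\phi_i^{-1}\big|\le\big|\chi_i\circ(f'-f)\circ g'\circ\phi_i^{-1}\big|+\big|\chi_i\circ f\circ g'\circ\phi_i^{-1}-\chi_i\circ f\circ g\circ\phi_i^{-1}\big|,
\]
where the first summand is small because $f'$ is $C^{r_i}$-close to $f$ on $B_i\supseteq g'(A_i)$ and $g'$ has derivatives bounded by $\mathcal{V}$ (Fa\`{a} di Bruno again), and the second is small by the pushforward estimate already established, applied to the fixed map $f$. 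Choosing the $\delta_i$ so that the sum is $<\epsilon_i$ gives $\mathrm{Comp}(\mathcal{U}\times\mathcal{V})\subseteq\mathcal{W}$, hence very-strong continuity, and the equivalence-class remark above upgrades this to fine-very-strong continuity.

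The only genuinely non-formal ingredient is the necessity of properness: without it the images $g(A_i)$ may accumulate in $Y$, no locally finite family of compacta in $Y$ then controls $f$ near all the $g(A_i)$ simultaneously within one basic neighbourhood, and continuity of $\mathrm{Comp}$ indeed fails (cf.\ \cite[Example 2.1]{HS2016}). The claim above is precisely what rescues the chart-and-precision bookkeeping of the third step; carrying that out uniformly — matching the two locally finite families $\{A_i\}$ and $\{B_i\}$, passing to finite chart covers, and propagating the Fa\`{a} di Bruno estimates with uniform constants — is where the real work lies, whereas the continuity of $f_*$ is comparatively soft.
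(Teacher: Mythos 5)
The paper does not prove this proposition at all --- it is quoted verbatim from \cite[Theorem 2.5 and Proposition 2.7]{HS2016} --- so there is no in-house argument to compare against; your reconstruction is essentially the proof given in that reference, and it is sound: the chart-splitting plus Fa\`{a} di Bruno bookkeeping for $f_*$, and above all the key lemma that properness of $g$ makes $\{g(A_i)\}$ locally finite in $Y$ (which is exactly what lets a single basic neighbourhood of $f$ control all the images at once), are the right ingredients. One small point to tighten: the ``equivalence-class remark'' that upgrades very-strong to fine-very-strong continuity is not literally identical for $\mathrm{Comp}$, since there both arguments vary; you need the additional (easy) observation that $f'\sim f$ and $g'\sim g$ with $g$ proper imply $f'\circ g'\sim f\circ g$, because $\{x: f'(g'(x))\neq f(g(x))\}$ is contained in $\{x: g'(x)\neq g(x)\}\cup g^{-1}\big(\overline{\{y: f'(y)\neq f(y)\}}\big)$ and the second set is compact precisely by properness of $g$.
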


\begin{cor}\label{cor: ev}
Endow $C^\infty (X,Y)$ either with the very strong or fine very strong topology. Then the evaluation $\ev \colon C^\infty (X,Y) \times X \rightarrow Y, (f,x) \mapsto f(x)$ is continuous.
\end{cor}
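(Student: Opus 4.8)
The plan is to deduce this from the continuity of the composition map in Proposition \ref{top con} via the standard ``one-point space'' trick. Write $\ast$ for the one-point manifold (a zero-dimensional, hence compact and paracompact, finite-dimensional manifold). For any finite-dimensional manifold $Z$ a smooth map $\ast\to Z$ is nothing but a point of $Z$, and the first step is to check that the natural bijection $\ev_\ast\colon C^\infty(\ast,Z)\to Z$, $h\mapsto h(\ast)$, is a homeomorphism for both the very strong and the fine very strong topology.

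Granting this, I would argue as follows. Since $\ast$ is compact, every smooth map $\ast\to X$ is proper, so $\Prop(\ast,X)=C^\infty(\ast,X)$, which $\ev_\ast$ identifies with $X$. Applying Proposition \ref{top con} with source manifold $\ast$ (in the r\^{o}le of ``$X$'' there), intermediate manifold $X$ (in the r\^{o}le of ``$Y$''), and target $Y$ (in the r\^{o}le of ``$Z$''), the composition map
\[
 \mathrm{Comp}\colon C^\infty(X,Y)\times\Prop(\ast,X)\longrightarrow C^\infty(\ast,Y),\qquad (f,g)\mapsto f\circ g,
\]
is continuous. Transporting it along the homeomorphisms $\Prop(\ast,X)\cong X$ and $C^\infty(\ast,Y)\cong Y$ turns it into $(f,x)\mapsto (f\circ g_x)(\ast)=f(x)$, where $g_x\colon\ast\to X$ denotes the smooth map with value $x$; that is, it becomes precisely $\ev$. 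Hence $\ev$ is continuous, being a composite of $\mathrm{Comp}$ with the relevant homeomorphisms.

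The one genuinely non-formal point — and hence the main obstacle — is the homeomorphism $C^\infty_{vS}(\ast,Z)\cong Z$ together with the coincidence of the fine very strong and the very strong topology on a one-point source. For the latter: the equivalence relation $\sim$ defining the fine very strong topology is trivial here, as any subset of $\ast$ has compact closure, so the single equivalence class is the whole space and contributes nothing. For the former: an elementary neighbourhood $\mathcal{N}^r(f;A,(U,\phi),(V,\psi),\epsilon)$ with $A\subseteq\ast$ compact is either the whole space (when $A=\emptyset$) or, since $\ast$ has no coordinate directions so only the $j=0$ term of the supremum occurs, equals $\{\,h\mid h(\ast)\in V,\ \|\psi(h(\ast))-\psi(f(\ast))\|<\epsilon\,\}$; as $(V,\psi)$ and $\epsilon$ vary these sets form a neighbourhood basis of $f(\ast)$ for the manifold topology of $Z$, and basic neighbourhoods reduce to finite intersections of them (local finiteness of $\{A_i\}$ over the one-point space forces all but finitely many $A_i$ to be empty). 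Unwinding these definitions should pose no real difficulty.
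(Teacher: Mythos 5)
Your proposal is correct and follows exactly the paper's own argument: identify $X\cong\Prop(\ast,X)=C^\infty(\ast,X)$ via the compact one-point manifold and write $\ev(f,x)=\mathrm{Comp}(f,\ast\mapsto x)$, then invoke Proposition \ref{top con}. The only difference is that you spell out the verification that $C^\infty(\ast,Z)\cong Z$ is a homeomorphism in both topologies, which the paper leaves implicit.
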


\begin{proof}
 Since the one-point manifold $\ast$ is compact $\Prop (\ast , X) = C^\infty (\ast,X) \cong X$. Now $\ev (f,x)= \mathrm{Comp} (f, \ast \mapsto x)$ and the continuity follows from Proposition \ref{top con}.
\end{proof}

\subsection*{The manifold structure}
It is well known that with respect to the fine very strong topology the space of mappings $C^\infty (X,Y)$ becomes an infinite-dimensional manifold.
Let us briefly recall the ingredients of this construction.

For a (possibly infinite-dimensional) manifold $N$ we denote by $\mathbf{0} \colon N \rightarrow TN$ the zero-section into the tangent bundle.

\begin{defn}[Local Addition]
Let $N$ be a (possibly infinite-dimensional) manifold.  A \emph{local addition} on $N$ is a smooth map $\Sigma \colon TN \rightarrow N$ together with an open neighborhood $\Omega \subseteq TN$ of the zero-section, such that
\begin{enumerate}
 \item  $(\pi_{TN}, \Sigma) \colon \Omega \rightarrow U \subseteq N\times N$ induces a diffeomorphism onto an open neighborhood $U$ of the diagonal in $N \times N$,
  \item $\Sigma \circ \mathbf{0} = \id_N$.
\end{enumerate}
If $N$ is finite dimensional, a local addition can always be constructed as the exponential map associated to a Riemannian metric.
\end{defn}

\begin{setup}[Smooth structure of $C^\infty (X,Y)$]\label{setup: smoothstruct}
Consider $f \colon X \rightarrow Y$ and define
  \begin{align*}
   U_f &\coloneq \big\{h \in C^\infty (X, Y) \mid  h \sim f, (f(x),h(x)) \in U,\  \forall x \in X\big\} \\
   \mathcal{D}_f (X, TY) &\coloneq \big\{h \in C^\infty (X,TY) \mid \pi_{Y} \circ h =f , \text{ and } h \sim \mathbf{0} \circ f\big\}
  \end{align*}
  where $\pi_Y \colon TY \rightarrow Y$ is the bundle projection.

Then we obtain a manifold chart for $C^\infty (X,Y)$ around $f \in C^\infty (X,Y)$ by defining
\begin{align*}
 \varphi_f \colon U_f \rightarrow \mathcal{D}_f (X,TY), \quad  h \mapsto (\pi_{Y},\Sigma)^{-1} \circ (f,h).
\end{align*}
Note that $ \mathcal{D}_f (X, TY)$ is canonically isomorphic to the locally convex space $\Gamma_c (f^* TY)$ of compactly supported sections with values in the pullback bundle $f^*TY$ (cf.\ \cite[1.17]{michor1980}).
 For later reference we note that the inverse of the chart is given by the formula $\varphi^{-1}_f (h) = \Sigma \circ h$.
 These charts turn $C^\infty_{fS} (X,Y)$ into an infinite-dimensional manifold, \cite[Theorem 10.4]{michor1980}. In the following we will always endow spaces of the form $C^\infty_{fS} (X,Y)$ with this manifold structure.
 Recall from \cite[Proof of Theorem 10.4]{michor1980} that the manifold structure of $C^\infty_{fS} (X,Y)$ does not depend on the choice of local addition used in the construction.
\end{setup}

The smooth structure induced by the fine very strong topology turns the continuous mappings discussed in the previous section into smooth maps.
Namely we have the following:

\begin{setup}[Pushforward by smooth maps] \label{setup: pf}
Let $\theta \colon Y \rightarrow Z$ be a smooth map.
Then \cite[Corollary 10.14]{michor1980} establishes smoothness of the pushforward
 $$\theta_* \colon C^\infty_{fS} (X,Y) \rightarrow C^\infty (X,Z),\quad f \mapsto \theta \circ f.$$ 
\end{setup}

\begin{setup}[{Smoothness of composition \cite[Theorem 11.4]{michor1980}}] \label{setup: comp}
Endowing $\Prop (X,Y) \subseteq C^\infty_{fS} (X,Y)$ with the subspace topology, the composition map
 \begin{align*}
  \mathrm{Comp}\colon  C^{\infty}(Y,Z) \times \Prop(X, Y )\rightarrow C^{\infty}(X, Z) , \quad ( f , g)\mapsto f\circ g\\
 \end{align*}
 is smooth. In particular,
 $\ev \colon C^\infty_{fS} (X,Y) \times X \rightarrow Y ,\quad  (f,x) \mapsto f(x)$ is smooth.
\end{setup}

\section{Details for the proof of the Stacey-Roberts Lemma}\label{app: detailedproofs}
 \begin{setup}[Conventions]
  For the rest of this section, we let $X,Y$ be finite-dimensional, paracompact manifolds and $\theta \colon X \rightarrow Y$ be a smooth surjective submersion.
 \end{setup}

 \begin{lem}[{Extracted from \cite[Proof of Theorem 5.1]{1301.5493v1}}]\label{lem: VFsubm}
  There is a smooth map $X_\theta \colon TX \rightarrow \mathfrak{X}_c (X)$ into the compactly supported vector fields on $X$ such that
  \begin{align}
   X_\theta (v)\big(\pi (v)\big) &= v \quad \forall v \in TX, \ \pi \colon TX \rightarrow X \text{ bundle projection} \label{VF: eq 1}\\
   T\theta \big(X_\theta (v)(x)\big) &= 0 \quad \text{ if } v \in \mathrm{Ker}\, T\theta \label{VF: eq 2}
  \end{align}
 \end{lem}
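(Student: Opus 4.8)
The plan is to build $X_\theta$ by a partition-of-unity argument based on submersion charts. The point is that in a submersion chart $\theta$ is a linear projection, so a vector field whose local principal part is a \emph{constant} vector lying in the kernel direction is vertical at \emph{every} point of the chart; this makes \eqref{VF: eq 2} automatic.

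\textbf{Setup.} First I would fix, using the local normal form for submersions together with paracompactness and local compactness of $X$, a locally finite cover $(U_i)_{i\in I}$ of $X$ by relatively compact open sets, each carrying a submersion chart $\phi_i\colon U_i\xrightarrow{\cong}\Omega_i\subseteq\R^k\times\R^{n-k}$ for which there is a chart $\psi_i$ of $Y$ with $\theta(U_i)\subseteq\operatorname{dom}\psi_i$ and $\psi_i\circ\theta\circ\phi_i^{-1}=\mathrm{pr}_1$. In such a chart $T_y\phi_i$ carries $\ker T_y\theta$ onto $\{0\}\times\R^{n-k}$ for every $y\in U_i$. Then I would choose a subordinate smooth partition of unity $(\lambda_i)$ (so each $\supp\lambda_i$ is compact and contained in $U_i$) and bump functions $b_i\in C^\infty_c(U_i)$ with $0\le b_i\le 1$ and $b_i\equiv 1$ on $\supp\lambda_i$.

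\textbf{Definition and the two identities.} For $v\in T_xX$ with $x=\pi(v)\in U_i$, let $W_i(v)$ be the vector field on $U_i$ obtained by pulling back along $\phi_i$ the constant vector field on $\Omega_i$ with value $T_x\phi_i(v)$; thus $W_i(v)(x)=v$. Put $\widetilde Z_i(v)\coloneq b_i\cdot W_i(v)$, extended by $0$ to all of $X$, and define
\[ X_\theta(v)\coloneq\sum_{i\in I}\lambda_i(\pi(v))\,\widetilde Z_i(v), \]
where the $i$-th summand is understood to be $0$ whenever $\pi(v)\notin U_i$ — this is consistent (and, see below, smooth) because $\lambda_i$ vanishes on the open neighbourhood $X\setminus\supp\lambda_i$ of $X\setminus U_i$. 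At the point $x=\pi(v)$ the $i$-th summand equals $\lambda_i(x)b_i(x)\,v$, and $\lambda_i(x)\neq0$ forces $x\in\supp\lambda_i$, hence $b_i(x)=1$; summing and using $\sum_i\lambda_i\equiv1$ yields $X_\theta(v)(x)=v$, which is \eqref{VF: eq 1}. If moreover $v\in\ker T\theta$, then for each index $i$ with $\lambda_i(x)\neq0$ we have $T_x\phi_i(v)\in\{0\}\times\R^{n-k}$, and a vector field on $U_i$ with constant principal part in $\{0\}\times\R^{n-k}$ takes all its values in $\ker T\theta$ (since $\theta=\mathrm{pr}_1$ in the chart); multiplying by the scalar $\lambda_i(\pi(v))b_i$ and summing preserves this, giving \eqref{VF: eq 2}.

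\textbf{Smoothness and the main obstacle.} It remains to verify that $X_\theta\colon TX\to\mathfrak{X}_c(X)=\Gamma_c(TX)$ is smooth in Bastiani's sense, and this is the only genuine difficulty. Each $v\mapsto\lambda_i(\pi(v))$ is smooth, and $v\mapsto\widetilde Z_i(v)$ is smooth from $\pi^{-1}(U_i)$ into the Fr\'echet space $\Gamma_{\supp b_i}(TX)$: read in the chart $\phi_i$ it is the map $v\mapsto(b_i\circ\phi_i^{-1})\cdot\mathrm{pr}_2(T\phi_i(v))$ into $C^\infty_{\phi_i(\supp b_i)}(\Omega_i,\R^n)$, which is smooth (indeed fibre-wise linear), and it extends smoothly by $0$ over $\pi^{-1}(X\setminus\supp\lambda_i)$. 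Finally, any $v_0\in TX$ has an open neighbourhood $\mathcal{O}$ with $\overline{\pi(\mathcal{O})}$ compact; since $(U_i)$ is locally finite, $\overline{\pi(\mathcal{O})}$ meets only finitely many $\supp\lambda_i$, so on $\mathcal{O}$ the defining sum is finite and takes values in $\Gamma_K(TX)$ for the fixed compact set $K=\bigcup\{\supp b_i:\supp\lambda_i\cap\overline{\pi(\mathcal{O})}\neq\emptyset\}$; hence $X_\theta|_{\mathcal{O}}$ is a finite sum of smooth maps into a Fr\'echet space, so $X_\theta$ is smooth into the $LF$-space $\mathfrak{X}_c(X)$. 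The reason the cover had to be taken locally finite by \emph{relatively compact} submersion charts with \emph{compactly supported} cutoffs is precisely this last step: it forces the construction to factor locally through a single Fr\'echet section space $\Gamma_K(TX)$, after which only the standard smoothness criteria for maps into spaces of sections of a finite-dimensional bundle are needed.
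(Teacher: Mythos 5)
Your proposal is correct and follows essentially the same route as the paper: submersion charts in which $\theta$ is a linear projection, a locally finite relatively compact cover with compactly supported cutoffs so that the locally constant vector fields land in a fixed Fr\'echet section space $\Gamma_K(TX)$, and a locally finite sum to assemble $X_\theta$. The only (cosmetic) difference is the bookkeeping of the cutoffs: the paper uses a single family $\rho_i$ with $\sum_i\rho_i^2=1$ and the factor $\rho_i(\pi(v))\rho_i(q)$, whereas you use an ordinary partition of unity $\lambda_i$ together with bumps $b_i\equiv1$ on $\supp\lambda_i$; both yield \eqref{VF: eq 1} and \eqref{VF: eq 2} by the same computation.
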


 \begin{proof}
  For brevity we set $d \coloneq \text{dim } X$ and $k \coloneq \text{dim } Y$.

  \paragraph{Step 1:} \emph{Preparing charts and a variant partition of unity}
  Since $\theta$ is a submersion, we find submersion charts for each $p \in X$, i.e.\ manifold charts
  $\varphi_p \colon X \supseteq U_p \rightarrow \R^d = \R^{d-k} \times \R^{k}$ and $\psi_{\theta(p)} \colon Y \supseteq U_{\theta (p)} \rightarrow \R^k$ such that
  \begin{itemize}
   \item $\theta(U_p) \subseteq U_{\theta (p)}$, $\varphi_p (p) = 0$ and $\psi_{\theta (p)} \big(\theta (p)\big) = 0$,
   \item $\psi_{\theta (p)}^{-1} \circ \theta \circ \varphi_p^{-1} = \text{pr}_2$, where $\text{pr}_2 \colon \R^d = \R^{d-k}\times \R^k \rightarrow \R^{k}, (x,y) \mapsto y$.
  \end{itemize}
  By construction the chart domains $\{U_p\}$ cover $X$, whence we can construct a variant of a partition of unity subordinate to the covering $\{U_p\}$.
  Since $X$ is paracompact and locally compact, we can choose (see \cite[II, \S 3, Proposition 3.2]{langdgeo2001}) a locally finite subcover $\{W_i\}_{i\in I}$ of relatively compact open sets\footnote{A set is relatively compact if its closure is compact} subordinate to the cover $\{U_p\}$. We choose and fix for each $i \in I$ a $p_i \in X$ with $\overline{W}_i \subseteq U_{p_i}$.
  A trivial variation of the usual construction of a partition of unity (cf.\ e.g.\ \cite[II, \S 3, Theorem 3.3]{langdgeo2001}) we obtain a family of smooth functions $\{\rho_i \colon X \rightarrow \R, i \in I\}$ such that
  \begin{enumerate}
   \item The supports of the $\rho_i$ are compact and contained in $W_i \subseteq U_{p_i}$.
   \item for each $x \in X$ we have $\sum \big(\rho_i (x)\big)^2 = 1$.
  \end{enumerate}

  \paragraph{Step 2:} \emph{The maps $X_i$}
  Recall that in the chart $\varphi_{p_i} \colon U_{p_i} \rightarrow \R^d$ induces diffeomorphisms $TU_{p_i} \cong \R^d \times \R^d$ and $\mathfrak{X} (U_{p_i}) \rightarrow C^\infty (\R^d, \R^d), Y \mapsto \text{pr}_2 \circ T\varphi_{p_i} \circ Y\circ \varphi_{p_i}^{-1}$. Using these identifications we define a map $X_{p_i} \colon TU_{p_i} \rightarrow \mathfrak{X} (U_{p_i})$ as the map which corresponds to $\R^d \times \R^d \rightarrow C^\infty (\R^d, \R^d), (q,v) \mapsto (x \mapsto v)$.
  Note that $X_{p_i}$ is \textbf{not} smooth (not even continuous!). We will remedy this problem by using our variant partition of unity.
  However, let us record the following properties of $X_{p_i}$ first
  \begin{equation}\begin{aligned}
   X_{p_i} (v)\big(\pi (v)\big) &= v \quad  \forall v \in TU_{p_i} \text{ and}\\ \text{if } T\theta (v) = 0 \text{ then } T\theta \big(X_{p_i} (v)(x)\big)&=0 \quad \forall v \in TU_{p_i}, x \in U_{p_i}.
   \end{aligned} \label{eq: PXPI}
  \end{equation}

  Define for $i\in I$ a mapping $X_i \colon TX \rightarrow \mathfrak{X}_c (X)$ via the formula
  \begin{displaymath}
   X_i (v) (q) \coloneq \begin{cases}
                         \rho_i \big(\pi(v)\big) \rho_i (q) X_{p_i} (v)(q)& \text{if} \pi (v) ,q \in W_i \\
                         0 & \text{otherwise}
                        \end{cases}.
  \end{displaymath}
  Observe first that for each $v \in TM$ the map $X_i (v)$ is smooth with compact support contained in $\supp \rho_i$.
  In particular, $X_i$ takes its values in $\mathfrak{X}_c (X)$ and is smooth by Lemma \ref{lem: aux:idim} below, where we use $\mathfrak{X}_c (U_i) \cong C^\infty_c (\R^d, \R^d)$ induced by $\varphi_{p_i}$.
  Further, the construction ensures that the properties \eqref{eq: PXPI} are inherited by $X_i$.

  \paragraph{Step 3:} \emph{The map $X_\theta$.}
  Set $X_\theta \colon TX \rightarrow \mathfrak{X}_c (X),\ v \mapsto \sum_{i \in I} X_i (v)$. Since $X_i(v) \in \mathfrak{X}_c (X)$ and for a given $v$ only finitely many of the $X_i$ are non-zero, the map $X_\theta$ makes sense.

  To establish continuity of $X_\theta$ observe that on the closed set $\pi^{-1} (\overline{W}_i)$ the map $X_\theta$ is given by a finite sum of maps $X_i$ (since every $\overline{W}_i$ is compact and the supports of the $X_i$ form a locally finite cover). Thus $X_\theta|_{\pi^{-1} (\overline{W}_i)}$ is continuous and smooth on $\pi^{-1}(W_i)$ as a finite sum of such mappings (using that $\mathfrak{X}_c (X)$ is a locally convex space).
  As the family $\{\overline{W}_i\}_{i \in I}$ is locally finite, this proves that $X_\theta$ is continuous.
  Now $X_\theta$ is smooth as it restricts to a smooth map on each member of the open cover $\{\pi^{-1} (W_i)\}_{i\in I}$.

  Unraveling the definition of $X_\theta$, properties \eqref{VF: eq 1} and \eqref{VF: eq 2} now follow directly from the construction and \eqref{eq: PXPI} (cf.\ \cite[p.\ 29]{1301.5493v1}).
 \end{proof}

 \begin{lem}\label{lem: aux:idim}
Let $M$ be a (possibly infinite-dimensional) manifold. Consider a smooth map $f \colon M \times X \rightarrow \R^n$ with $n\in \N$ such that $f$ vanishes outside $M\times K$ for $K \subseteq X$ compact.
Then $f^\wedge \colon M \rightarrow C^\infty_c (X,\R^n), m \mapsto f(m,\cdot)$ is smooth.
 \end{lem}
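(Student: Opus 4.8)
The statement is a standard "smoothness of the adjoint map" result, and the strategy is to reduce it to the corresponding local statement in charts and then invoke the exponential law for smooth maps, which is available in the convenient-calculus-compatible Bastiani setting for mapping spaces of the type $C^\infty_c(X,\R^n)$ when the source is a finite-dimensional manifold and the support condition is imposed. First I would recall that, because $f$ vanishes outside $M \times K$ with $K$ compact, for each $m \in M$ the function $f(m,\cdot)$ indeed has support contained in the fixed compact set $K$, so $f^\wedge$ genuinely maps into the linear subspace $C^\infty_K(X,\R^n) \subseteq C^\infty_c(X,\R^n)$ of smooth functions supported in $K$; it suffices to prove $f^\wedge \colon M \to C^\infty_K(X,\R^n)$ is smooth, since $C^\infty_K(X,\R^n)$ carries the subspace topology and is a closed topological vector subspace.

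Next I would cover $K$ by finitely many relatively compact chart domains $(V_j,\kappa_j)_{j=1}^N$ of $X$ and choose a subordinate partition of unity $(\chi_j)$ with $\sum_j \chi_j \equiv 1$ on a neighbourhood of $K$. Writing $f = \sum_j \chi_j f$, it is enough to treat each summand, i.e.\ to reduce to the case where $f$ is supported in $M \times V_j$ for a single chart; transporting via $\kappa_j$, we are reduced to a smooth map $g \colon M \times \R^d \to \R^n$ (with $d = \dim X$) vanishing outside $M \times Q$ for some compact $Q \subseteq \R^d$, and we must show $g^\wedge \colon M \to C^\infty_c(\R^d,\R^n)$ is smooth. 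Here I would use that the topology on $C^\infty_c(\R^d,\R^n)$ restricted to functions supported in $Q$ is the topology of uniform convergence of all partial derivatives on $Q$, and that smoothness of a map into such a space can be checked by the exponential law: a map $M \to C^\infty_c(\R^d,\R^n)$ landing in $C^\infty_Q$ is smooth if and only if the associated map $M \times \R^d \to \R^n$ is smooth (this is the content of \cite[Lemma 11.6 and its corollaries]{michor1980}, or alternatively can be derived from \cite{neeb2006} for the locally convex setting). Since the associated map is exactly $g$, which is smooth by hypothesis, the claim follows, and reassembling the finitely many chart pieces via the (continuous linear, hence smooth) inclusions $C^\infty_{Q_j} \hookrightarrow C^\infty_c$ gives smoothness of $f^\wedge$.

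The main obstacle is the precise justification of the exponential law in this specific non-Banach, locally convex (Bastiani) framework: one must make sure that (a) the subspace $C^\infty_K(X,\R^n)$ of compactly-in-$K$-supported smooth maps carries the correct locally convex topology so that the inclusion into $C^\infty_c$ is a topological embedding onto a closed subspace, and (b) the equivalence "smooth into the mapping space $\iff$ smooth as a map of two variables" holds with the compact-support constraint. Both are available in the literature for finite-dimensional second source factor — this is essentially \cite[Lemma 11.6]{michor1980} (whose proof adapts to an infinite-dimensional parameter manifold $M$ since the argument is local in $M$ and only uses the finite dimensionality of $X$) — but one should cite it carefully and note that the compact support is exactly what makes the target a well-behaved locally convex space rather than an LF-space subtlety. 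Everything else (partition of unity, chart transport, finite sums of smooth maps being smooth) is routine.
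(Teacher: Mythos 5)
Your proof is correct, and its overall architecture (pass to the closed subspace $C^\infty_K(X,\R^n)$, observe that the induced topology there is the compact--open $C^\infty$-topology, localise over finitely many relatively compact pieces of $X$, and finish with the exponential law) matches the paper's. The localisation step is carried out differently, though: you decompose the \emph{function} as $f=\sum_j \chi_j f$ via a partition of unity and push each summand into a chart, so you additionally need that the multiplication operators $h\mapsto\chi_j h$ and the extension-by-zero inclusions $C^\infty_{Q_j}(\R^d,\R^n)\hookrightarrow C^\infty_c(X,\R^n)$ (precomposed with the chart) are continuous linear, after which smoothness of the finite sum is immediate. The paper instead decomposes the \emph{target}: it uses the fact (\cite[Proposition 8.13]{hg2004}) that restriction $C^\infty_K(X,\R^n)\to\bigoplus_{V_\alpha\cap K\neq\emptyset}C^\infty(V_\alpha,\R^n)_{c.o.}$ is a topological embedding with closed image onto a finite product, so smoothness can be checked componentwise on the restrictions $f|_{M\times V_\alpha}$ with no partition of unity and no extension operators. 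Your route is more self-contained but incurs the (routine) continuity checks just mentioned; the paper's route outsources exactly that work to the cited embedding result. On the one point you flag as the main obstacle, the exponential law with an infinite-dimensional parameter manifold $M$ in the Bastiani setting: rather than arguing that \cite[Lemma 11.6]{michor1980} ``adapts'', the cleaner move is to cite \cite[Theorem A]{alas2012}, which is stated precisely for smooth maps $M\times V_\alpha\to\R^n$ with $M$ an arbitrary locally convex manifold and $V_\alpha$ finite-dimensional, and which is what the paper invokes.
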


 \begin{proof}
  Every map $f(m,\cdot) \colon X \rightarrow \R^n$ vanishes outside of $K$, whence $f^\wedge$ takes its image in $C^\infty_K (X,\R^n) \coloneq \big\{h \in C^\infty (X,\R^n)\mid \supp h \subseteq K\big\} \subseteq C^\infty_c (X, \R^n)$.

  Since $C^\infty_K (X,\R^n)$ is a closed subspace of $C^\infty_c (X,\R^n)$ with the fine very strong topology, it suffices to prove that $f^\wedge$ is smooth as a mapping into $C^\infty_K (X,\R^n)$.
  Now it is easy to see (cf.\ \cite[Remark 4.5]{HS2016}) that the subspace topology on $C^\infty_K (X,\R^n)$ coincides with the topology induced by the compact open $C^\infty$ topology (which is for $X$ non-compact strictly coarser than the fine very strong topology on $C^\infty (X,\R^n)$.

  Choose a locally finite family $\{V_\alpha\}_{\alpha \in I}$ of relatively compact sets which cover $X$.
  Then \cite[Proposition 8.13 (a) and (b)]{hg2004} asserts that the map
  $$\Gamma \colon C^\infty_K (X, \R^n) \rightarrow \bigoplus_{\alpha \in I, V_\alpha \cap K \neq \emptyset} C^\infty (V_\alpha , \R^n)_{c.o.},\quad f \mapsto (f|_{V_\alpha})_{\alpha \in I}$$
  is a topological embedding with closed image, where the spaces on the right side are endowed with the compact open $C^\infty$-topology.
  As $K$ is compact, only finitely many $\alpha \in I$ satisfy $V_\alpha \cap K \neq \emptyset$.
  As finite direct sums and products coincide, the map $\Gamma \circ f^\wedge \colon M \rightarrow \prod_{\alpha \in I, V_\alpha \cap K \neq \emptyset} C^\infty (V_\alpha, \R^n)$ is smooth if the components $M \rightarrow C^\infty (V_\alpha, \R^n)$ are smooth.
  However, by the exponential law \cite[Theorem A]{alas2012} this is the case since $f|_{M \times V_\alpha} \colon M \times V_\alpha \rightarrow \R^n$ is smooth.
 \end{proof}

 \begin{lem}[{Extracted from \cite[Proof of Theorem 5.1]{1301.5493v1}}]\label{lem: adaptedlocadds}
  Denote by $\mathcal{V} \subseteq TX$ the vertical subbundle given fibre-wise by $\mathrm{Ker}\, T_p \theta$.
  There exists a smooth horizontal distribution $\mathcal{H} \subseteq TX$ (i.e.\ a smooth subbundle such that $T X = \mathcal{V} \oplus \mathcal{H}$) and local additions $\eta_X$ on $X$ and $\eta_Y$ on $Y$ such that the following diagram commutes:
 \begin{equation}\label{eq: diag} \begin{aligned}
 \begin{xy}
  \xymatrix{
     TX =  \mathcal{V} \oplus\mathcal{H} \ar[d]^{0 \oplus T\theta|_{\mathcal{H}}} &  \ar[l]_-{\supseteq}\Omega_X \ar[r]^{\eta_X}  & X \ar[d]^{\theta}  \\
      TY & \ar[l]_-{\supseteq} \Omega_Y \ar[r]^{\eta_Y}  &   Y
  }
\end{xy}\end{aligned}
 \end{equation}
 \end{lem}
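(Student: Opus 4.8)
The plan is to obtain $\eta_Y$ from a Riemannian metric on $Y$ and then to build $\eta_X$ by a two-step procedure -- a fibrewise exponential followed by horizontal transport along a base curve -- which is $\theta$-related to $\eta_Y$ by construction, so that the commutativity of \eqref{eq: diag} becomes automatic.

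First I would fix a Riemannian metric $g_Y$ on $Y$ and let $\eta_Y \colon \Omega_Y \to Y$ be the associated Riemannian exponential map on a suitable open neighbourhood $\Omega_Y$ of the zero-section; this is the standard local addition on a finite-dimensional manifold, and $\eta_Y \circ \mathbf{0} = \id_Y$. Next I fix an arbitrary Riemannian metric $h$ on $X$. Since $\theta$ is a submersion, $\mathcal{V} = \ker T\theta$ is a smooth subbundle of $TX$ (in submersion charts $T\theta$ is a coordinate projection), so $\mathcal{H} \coloneq \mathcal{V}^{\perp_h}$ is a smooth horizontal distribution with $TX = \mathcal{V}\oplus\mathcal{H}$ and $T\theta|_{\mathcal{H}_p}\colon \mathcal{H}_p \to T_{\theta(p)}Y$ a linear isomorphism for each $p$. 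For $p \in X$ write $F_p \coloneq \theta^{-1}(\theta(p))$ for the fibre through $p$; it is an embedded submanifold and $h$ restricts to a metric on it.

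Then I would define $\eta_X$ as follows. Given $v \in T_pX$, decompose $v = v^{\mathcal{V}} + v^{\mathcal{H}}$ according to $TX = \mathcal{V}\oplus\mathcal{H}$ and set $w \coloneq T\theta(v) = T\theta(v^{\mathcal{H}}) \in T_{\theta(p)}Y$. Let $p_v \coloneq \exp^{h|_{F_p}}_p(v^{\mathcal{V}}) \in F_p$ be the fibre-exponential of the vertical part, let $c_v(t) \coloneq \eta_Y(tw)$ be the resulting $g_Y$-geodesic in $Y$, and using $\mathcal{H}$ as an Ehresmann connection let $\widetilde{c_v}$ be the $\mathcal{H}$-horizontal lift of $c_v$ with $\widetilde{c_v}(0) = p_v$; finally put $\eta_X(v) \coloneq \widetilde{c_v}(1)$. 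The set of $v \in TX$ for which $tw \in \Omega_Y$ for all $t \in [0,1]$, $p_v$ is defined, and the horizontal lift exists up to time $1$, is open and contains the zero-section (for $v = 0_p$ one has $p_v = p$ and $\widetilde{c_v} \equiv p$), so on a suitable open neighbourhood $\Omega_X$ of the zero-section $\eta_X$ is a well-defined smooth map; smoothness follows from the smooth dependence of the fibre-exponential, of $c_v$, and of solutions of the horizontal-lift ODE on initial conditions and parameters. By construction $\eta_X(0_p) = p$, and since $\widetilde{c_v}$ lies over $c_v$ we obtain $\theta(\eta_X(v)) = c_v(1) = \eta_Y(w) = \eta_Y\big((0\oplus T\theta|_{\mathcal{H}})(v)\big)$, which is exactly the commutativity of \eqref{eq: diag}.

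It remains to check, after possibly shrinking $\Omega_X$, that $(\pi_{TX},\eta_X)\colon \Omega_X \to X\times X$ is a diffeomorphism onto an open neighbourhood of the diagonal (the analogous statement for $(\pi_{TY},\eta_Y)$ being standard for the exponential map). For this I would compute $T_{0_p}\eta_X$ under the canonical splitting $T_{0_p}(TX) \cong T_pX \oplus T_pX$ into the tangent-to-the-zero-section part and the vertical (fibre) part: on the vertical part one gets $\tfrac{\dd}{\dd s}\big|_{0}\eta_X(sv) = v^{\mathcal{V}} + v^{\mathcal{H}} = v$ (the fibre-exponential contributes $v^{\mathcal{V}}$, and the horizontal lift of $s\mapsto \eta_Y(sw)$ from a fixed point contributes $v^{\mathcal{H}}$ to first order), while $\pi_{TX}$ collapses it; on the zero-section part $\eta_X$ agrees with $\id_X$ to first order and $\pi_{TX}$ is the identity. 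Hence the derivative of $(\pi_{TX},\eta_X)$ along the zero-section is invertible, so $(\pi_{TX},\eta_X)$ is a local diffeomorphism near each $0_p$ and restricts to the identity on the zero-section; a standard tubular-neighbourhood type argument then lets us shrink $\Omega_X$ so that it is a diffeomorphism onto an open neighbourhood of the diagonal. The triple $(\mathcal{H},\eta_X,\eta_Y)$ then has all the required properties. The step needing the most care is the bookkeeping of the domains -- choosing $\Omega_X$ small enough that all three constituent maps are defined \emph{and} that $(\pi_{TX},\eta_X)$ is injective near the diagonal -- together with the first-order computation of $T_{0_p}\eta_X$; the geometric content that makes the diagram commute, namely that horizontal lifts stay in the $\theta$-preimage of the base curve, is built into the construction.
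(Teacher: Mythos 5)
Your construction is correct, but it takes a genuinely different route from the paper's. The paper first turns $\theta$ into a Riemannian submersion (choosing $\mathcal{H}$, pulling back a metric on $Y$ to $\mathcal{H}$ and picking an arbitrary bundle metric on $\mathcal{V}$), so that horizontal geodesics project to geodesics; it then sets $\eta_X(v_{\mathcal V},v_{\mathcal H})=\eta_1\bigl(P(v_{\mathcal V},v_{\mathcal H})\bigr)$, where $P$ parallel-transports the vertical part along the horizontal geodesic $t\mapsto\exp_X(tv_{\mathcal H})$ to its endpoint, and $\eta_1(v)=\ev\bigl(\Fl_1(X_\theta(v)),\pi(v)\bigr)$ moves within the fibre by flowing along the compactly supported vector fields $X_\theta(v)$ produced by the partition-of-unity construction of Lemma \ref{lem: VFsubm}. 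You reverse the order (vertical motion first, then horizontal), and you replace both mechanisms: the fibre motion is the leafwise Riemannian exponential of $h|_{F_p}$ rather than a flow of auxiliary vector fields, and the horizontal motion is the $\mathcal{H}$-horizontal lift of the base geodesic $t\mapsto\eta_Y(tw)$ rather than a horizontal geodesic of a submersion metric (for a Riemannian submersion these coincide, so this part is essentially the same object). Your version is shorter and avoids Lemma \ref{lem: VFsubm} entirely; what it asks in exchange is the smoothness of the \emph{fibrewise} exponential $v^{\mathcal V}\mapsto\exp^{h|_{F_p}}_p(v^{\mathcal V})$ as $p$ varies, which you should justify explicitly (it follows because the leafwise geodesic spray of the foliation by $\theta$-fibres is a smooth vector field on the total space of $\mathcal{V}$, its Christoffel symbols depending smoothly on the transverse coordinate in submersion charts), and the existence of horizontal lifts up to time $1$ on an open neighbourhood of the zero section, which you correctly handle by shrinking $\Omega_X$. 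The first-order computation of $T_{0_p}\eta_X$ and the globalisation from ``local diffeomorphism along the zero section'' to ``diffeomorphism onto a neighbourhood of the diagonal'' are stated a little tersely but are the standard arguments, and they parallel Step 4 of the paper's proof.
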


 \begin{proof}
 As a first step, we turn $\theta$ into a Riemannian submersion with respect to some auxiliary Riemannian metrics.
 Recall from \cite[Lemma 2.1]{dHF15} that one can construct a $\theta$-transverse Riemannian metric $G_t$ on $X$ as follows:
 Choose an Ehresmann connection for $\mathcal{V}$, i.e.\ a smooth horizontal distribution $\mathcal{H}$ complementing $\mathcal{V}$.
 Then we declare $\mathcal{V}$ and $\mathcal{H}$ to be orthogonal and choose a Riemannian metric $G_Y$ on $Y$ which induces a metric on $\mathcal{H}$ via pullback by $T\theta$.
 For $\mathcal{V}$ we choose now an arbitrary bundle metric $G_\mathcal{V}$and combining the pieces we obtain a Riemannian metric $G_t$ which is $\theta$-transverse and, by construction (cf.\ \cite[Section 2.1]{dHF15}), turns $\theta$ into a Riemannian submersion.

 The Riemannian metrics $G_t$ and $G_Y$ give rise to Riemannian exponential maps $\exp_X \colon \Omega_X \rightarrow X$ and $\exp_Y \colon \Omega_Y \rightarrow Y$.
 Here $\Omega_X$ and $\Omega_Y$ are suitable neighborhoods of the zero section such that $T\theta (\Omega_X) \subseteq \Omega_Y$ and the mappings $(\pi , \exp_X)$ and $(\pi_Y, \exp_Y)$ with $\pi,\pi_Y$ the canonical bundle projections induce diffeomorphisms onto a neighborhood of the diagonal in $X\times X$ (or $Y\times Y$).
 In other words $\exp_X$ and $\exp_Y$ induce local additions and we set $\eta_Y \coloneq \exp_Y$.

 The Riemannian submersion $\theta$ maps horizontal geodesics (i.e.\ geodesics with initial value in $\mathcal{H}$) to geodesics in $Y$, whence we obtain the formula.
 \begin{equation}\label{eq: exp:comm}
   \theta \circ \exp_X (v) = \eta_Y \circ T \theta (v) \quad v \in \Omega_X \cap \mathcal{H}
 \end{equation}
 Our aim is now to modify $\exp_X$ to obtain a local addition $\eta_X$ such that \eqref{eq: exp:comm} holds for each $v \in \Omega_X$.
 To this end, we need to construct several auxiliary maps:

 \paragraph{Step 1:} \emph{Parallel transport in $\mathcal{V}$.} Denote for a smooth map $\gamma \colon [0,1] \rightarrow X$ the parallel transport along $\gamma$ with respect to the vertical distribution\footnote{We work here with parallel transport with respect to the bundle metric on $\mathcal{V}$ and we may assume that parallel transport above any smooth curve exists (since we can choose a suitable metric on $\mathcal{V}$, cf.\ \cite[17.9 Theorem]{MR2428390}). The point is to obtain a smooth transport which stays in $\mathcal{V}$.} by $P_\gamma^{t_0,t_1} \colon T_{\gamma (t_0)} X \rightarrow T_{\gamma (t_1)} X$.
 From the exponential law \cite[Theorem 7.8 (d)]{MR3351079}, we see that the assignment $\alpha \colon TX \rightarrow C^\infty ([0,1], X), \ v \mapsto \alpha_v$ is smooth, where $\alpha_v(t) \coloneq \exp_X (tv)$.
 Now as parallel transport arises as the flow of an ordinary differential equation which depends smoothly on the parameters \cite[17.8 Theorem]{MR2428390}, we obtain a smooth map
 $$P \colon TX = \mathcal{V}\oplus \mathcal{H}\rightarrow \mathcal{V} \subseteq TX, \quad (v_{\mathcal{V}}, v_{\mathcal{H}}) \mapsto P^{0,1}_{\alpha_{v_{\mathcal{H}}}} (v_{\mathcal{V}}). $$

 \paragraph{Step 2:} \emph{A $\theta$-adapted local addition.}
  We consider now the smooth map $X_\theta \colon TX \rightarrow \mathfrak{X}_c (X)$ constructed in Lemma \ref{lem: VFsubm}.
  Then let $\Fl_1 \colon \mathfrak{X}_c (X) \rightarrow \Diff_c (X)$ the mapping which takes a vector field to its flow evaluated at time $1$.
  Recall that $\Fl_1$ is the Lie group exponential map (\cite[4.6]{MR702720} or cf.\ \cite[Theorem 5.4.11]{MR3328452} together with \cite[Corollary 13.7]{hg2015}) for the Lie group $\Diff_c (X)$, whence it is smooth.
  Denote by $\text{ev} \colon C^\infty (X,X) \times X \rightarrow X, (\varphi,x) \mapsto \varphi(x)$ the evaluation which is smooth by \cite[Corollary 11.7]{michor1980}.
  We then obtain a smooth map
  $$\eta_1 \colon TX \rightarrow X,\quad v \mapsto \text{ev} \big(\Fl_1 (X_\theta (v)),\pi(v)\big).$$
  Since $T\theta X_\theta (v) = 0$ for every $v \in \mathcal{V}$ we see that $\theta$ is constant on the integral curves of $X_\theta (v)$, i.e.\
  \begin{equation}\label{eq: theta:const}
   \theta \circ \eta_1|_{\mathcal{V}} = \theta \circ \pi_X|_{\mathcal{V}}
  \end{equation}
  Further, we note that $X_\theta (0_x) = 0 \in \mathfrak{X}_c (X)$, whence $\eta_1 (0_x) = x$ for every $x \in X$.
  For later use let us show that $\eta_1$ restricts to a local addition on some $0$-neighborhood.
  To this end we prove that $T_{0_x}(\pi_X , \eta_1 ) \colon T_{0_x} TX \rightarrow T(X \times X)$ has invertible differential for every $x \in X$.
  Recall that $T_0 \Fl_1 = \id_{\mathfrak{X}_c (X)}$ since $\Fl_1$ is the Lie group exponential of $\Diff_c (X)$. Then the chain rule and \eqref{VF: eq 1} show that
  \begin{align*}
   T_{0_x} \text{ev} \circ (\Fl_1 \circ X_\theta , \pi) &= T \text{ev} \circ (T_0 \Fl_1 (T_{0_x} X_\theta) , T_{0_x} \pi) =  T \text{ev} \circ (T_{0_x} X_\theta , T_{0_x} \pi)\\
							&= T_{0_x} (\text{ev} \circ (X_\theta , \pi)) = T_{0_x} \id_{TX},
  \end{align*}
  whence the derivative of $(\pi_{X} , \eta_1)$ over $0_x$ is (up to identification)\footnote{For the identification recall from \cite[X \S 4, Theorem 4.3]{langdgeo2001} that the zero section $\mathbf{0} \colon X \rightarrow TX$ induces a canonical isomorphism $\mathbf{0}^* (T^2X) \cong TX \oplus TX$ given in local coordinates by $(x,0,v,w) \mapsto ((x,v) , (x,w))$. Further the diagonal map $\Delta \colon X \rightarrow X \times X$ induces the identification $\Delta^* (T (X\times X)) \cong TX \oplus TX$ via the local formula $((x,x) (v,w)) \mapsto ((x,v) , (x,w))$.} given by the matrix $\begin{bmatrix} \id & 0 \\ \id & \id \end{bmatrix}$.
  Applying the inverse function theorem, $\eta_1$ restricts to a local addition on some neighborhood of the zero-section.
  One can show that this local addition is adapted to $\theta$ in the sense of \cite[Definition 3.1]{MR3351079} but we will not need this here.

  \paragraph{Step 3:} \emph{Construction of the map $\eta_X$.}
  We can now use the local addition $\eta_1$ to modify $\exp_X$ as follows:
  \begin{displaymath}
   \eta_X \colon \Omega_X \cap \mathcal{V} \oplus \mathcal{H} \rightarrow X , \quad (v_{\mathcal{V}}, v_{\mathcal{H}}) \mapsto \eta_1 \big(P (v_{\mathcal{V}}, v_{\mathcal{H}})\big)
  \end{displaymath}
  By construction, $\eta_X$ is a smooth map and combining \eqref{eq: theta:const} and \eqref{eq: exp:comm} we see that for $(v_{\mathcal{V}}, v_{\mathcal{H}}) \in \Omega_X$ we have
  \begin{displaymath}
   \theta \eta_X (v_{\mathcal{V}}, v_{\mathcal{H}}) = \theta \big(\alpha_{v_\mathcal{H}}(1)\big) = \theta \circ \exp_{X} (v_\mathcal{H}) = \exp_Y \big(\theta (v_\mathcal{H})\big) =  \eta_Y \circ T\theta (v_{\mathcal{V}}, v_{\mathcal{H}}).
  \end{displaymath}
  Thus $\eta_X$ satisfies \eqref{eq: diag} and we are left to prove that $\eta_X$ indeed restricts to a local addition on some neighborhood of the zero section.

  \paragraph{Step 4:} \emph{$\eta_X$ induces a local addition.} The old argument was deleted in this version and can either be seen in the earlier arXiv versions or the published version.

  \paragraph{Note added in 2024:} 
  \textbf{Step 4 in the published and the earlier  arXiv version contains a critical error}: The last tangential map in the argument was erroneously calculated. Moreover, the proof idea for Step 4 was flawed and will not work as originally presented. I thank Pelle Steffens for bringing this to my attention and discussing with me the solution of this problem.
  
  Note however, that the statement of the Stacey Roberts Lemma and everything up to Step 4 is correct though.
  Fortunately, Pelle Steffens was so kind as to supply a correct argument in his article \cite[Lemma 3.2.18]{ste24} (or go directly to the arXiv version \url{https://arxiv.org/abs/2404.07931}).

  The cited Lemma 3.2.18 is a version of the Stacey-Roberts Lemma which also contains the proofs of the Lemmata contained in the present paper as well as equivalent arguments for Steps 1-3 in the proof. 
  The argument replacing Step 4 can be found on p.50 of the arXiv version, it is the remaining part of the proof of Lemma 3.2.18 after the first commutative diagram on p.50.
 \end{proof}
\addcontentsline{toc}{section}{References}
\bibliography{Amiri_Schmeding_Differentiable_Monoid}

\begin{thebibliography}{CFMT16}
\providecommand{\url}[1]{\texttt{#1}}
\providecommand{\urlprefix}{URL }
\expandafter\ifx\csname urlstyle\endcsname\relax
  \providecommand{\doi}[1]{doi:\discretionary{}{}{}#1}\else
  \providecommand{\doi}{doi:\discretionary{}{}{}\begingroup \urlstyle{rm}\Url}\fi
\providecommand{\eprint}[2][]{\url{#2}}

\bibitem[Ami17]{Amiri2017}
Amiri, H.
\newblock \emph{A group of continuous self-maps on a topological groupoid}.
\newblock Semigroup Forum  (2017):1--12

\bibitem[AS15]{alas2012}
Alzaareer, H. and Schmeding, A.
\newblock \emph{Differentiable mappings on products with different degrees of differentiability in the two factors}.
\newblock Expo. Math. \textbf{33} (2015)(2):184--222

\bibitem[AS18]{AS18}
Amiri, H. and Schmeding, A.
\newblock \emph{{Linking Lie groupoid representations and representations of infinite-dimensional Lie groups}} 2018.
\newblock \eprint{arXiv:1805.03935v1}

\bibitem[Bas64]{bastiani}
Bastiani, A.
\newblock \emph{Applications diff\'erentiables et vari\'et\'es diff\'erentiables de dimension infinie}.
\newblock J. Analyse Math. \textbf{13} (1964):1--114

\bibitem[BGN04]{BGN04}
Bertram, W., Gl{{\"o}}ckner, H. and Neeb, K.-H.
\newblock \emph{Differential calculus over general base fields and rings}.
\newblock Expo. Math. \textbf{22} (2004)(3):213--282

\bibitem[Bou98]{MR1726779}
Bourbaki, N.
\newblock \emph{General topology. {C}hapters 1--4}.
\newblock Elements of Mathematics (Berlin) (Springer-Verlag, Berlin, 1998).
\newblock Translated from the French, Reprint of the 1989 English translation

\bibitem[CES17]{CES17}
Celledoni, E., Eidnes, S. and Schmeding, A.
\newblock \emph{Shape analysis on homogeneous spaces} 2017.
\newblock \eprint{arXiv:1704.01471}

\bibitem[CFMT16]{1603.00064v1}
Crainic, M., Fernandes, R.~L. and Martinez-Torres, D.
\newblock \emph{{Regular Poisson manifolds of compact types (PMCT 2)}} 2016.
\newblock \eprint{arXiv:1603.00064v1}

\bibitem[Gl{\"{o}}02]{hg2002a}
Gl{\"{o}}ckner, H.
\newblock \emph{{Infinite-dimensional Lie groups without completeness restrictions}}.
\newblock In A.~Strasburger, J.~Hilgert, K.~Neeb and W.~Wojty\'{n}ski (Eds.), \emph{{Geometry and Analysis on Lie Groups}}, Banach Center Publication, vol.~55, pp. 43--59 (Warsaw, 2002)

\bibitem[Gl{\"{o}}04]{hg2004}
Gl{\"{o}}ckner, H.
\newblock \emph{{L}ie groups over non-discrete topological fields} 2004.
\newblock \eprint{arXiv:math/0408008}

\bibitem[Gl{\"{o}}15]{hg2015}
Gl{\"{o}}ckner, H.
\newblock \emph{{R}egularity properties of infinite-dimensional {L}ie groups, and semiregularity} 2015.
\newblock \eprint{arXiv:1208.0715}

\bibitem[Gl{\"o}16]{Glofun}
Gl{\"o}ckner, H.
\newblock \emph{{Fundamentals of submersions and immersions between infinite-dimensional manifolds}} 2016.
\newblock \eprint{arXiv:1502.05795v4}

\bibitem[Han18]{Hanusch18}
Hanusch, M.
\newblock \emph{{The Strong Trotter Property for Locally $\mu$-convex Lie Groups}} 2018.
\newblock \urlprefix\url{https://arxiv.org/abs/1802.08923}.
\newblock \eprint{arXiv:1802.08923}

\bibitem[HF18]{dHF15}
del Hoyo, M. and Fernandes, R.~L.
\newblock \emph{Riemannian metrics on {L}ie groupoids}.
\newblock J. Reine Angew. Math. \textbf{735} (2018):143--173

\bibitem[Hoy16]{MR3542951}
del Hoyo, M.
\newblock \emph{Complete connections on fiber bundles}.
\newblock Indag. Math. (N.S.) \textbf{27} (2016)(4):985--990.
\newblock \doi{10.1016/j.indag.2016.06.009}

\bibitem[HS17]{HS2016}
Hjelle, E.~O. and Schmeding, A.
\newblock \emph{Strong topologies for spaces of smooth maps with infinite-dimensional target}.
\newblock Expo. Math. \textbf{35} (2017)(1):13--53

\bibitem[Kel74]{keller}
Keller, H.~H.
\newblock \emph{Differential calculus in locally convex spaces}.
\newblock Lecture Notes in Mathematics, Vol. 417 (Springer-Verlag, Berlin-New York, 1974)

\bibitem[KM97]{conv1997}
Kriegl, A. and Michor, P.
\newblock \emph{{The Convenient Setting of Global Analysis}}.
\newblock Mathematical Surveys and Monographs 53 (Amer. Math. Soc., Providence R.I., 1997)

\bibitem[Lan01]{langdgeo2001}
Lang, S.
\newblock \emph{{Fundamentals of Differential Geometry}}.
\newblock Graduate texts in mathematics 191 (Springer, New York, $^2$2001)

\bibitem[Mac05]{Mackenzie05}
Mackenzie, K. C.~H.
\newblock \emph{General theory of {L}ie groupoids and {L}ie algebroids}, London Mathematical Society Lecture Note Series, vol. 213 (Cambridge University Press, Cambridge, 2005)

\bibitem[Mic80]{michor1980}
Michor, P.~W.
\newblock \emph{Manifolds of {D}ifferentiable {M}appings}, Shiva Mathematics Series, vol.~3 (Shiva Publishing Ltd., Nantwich, 1980)

\bibitem[Mic83]{MR702720}
Michor, P.
\newblock \emph{Manifolds of smooth maps. {IV}. {T}heorem of de {R}ham}.
\newblock Cahiers Topologie G\'eom. Diff\'erentielle \textbf{24} (1983)(1):57--86

\bibitem[Mic08]{MR2428390}
Michor, P.~W.
\newblock \emph{Topics in differential geometry}, Graduate Studies in Mathematics, vol.~93 (American Mathematical Society, Providence, RI, 2008)

\bibitem[Mil65]{MR0226651}
Milnor, J.~W.
\newblock \emph{Topology from the differentiable viewpoint}.
\newblock Based on notes by David W. Weaver (The University Press of Virginia, Charlottesville, Va., 1965)

\bibitem[Mil84]{Milnor84}
Milnor, J.
\newblock \emph{Remarks on infinite-dimensional {L}ie groups}.
\newblock In \emph{Relativity, {G}roups and {T}opology, II (Les Houches, 1983)}, pp. 1007--1057 (North-Holland, Amsterdam, 1984)

\bibitem[MM03]{MR2012261}
Moerdijk, I. and Mr\v{c}un, J.
\newblock \emph{Introduction to foliations and {L}ie groupoids}, Cambridge Studies in Advanced Mathematics, vol.~91 (Cambridge University Press, Cambridge, 2003)

\bibitem[Nee06]{neeb2006}
Neeb, K.-H.
\newblock \emph{Towards a {L}ie theory of locally convex groups}.
\newblock Jpn. J. Math. \textbf{1} (2006)(2):291--468

\bibitem[Poh17]{MR3638284}
Pohl, A.~D.
\newblock \emph{The category of reduced orbifolds in local charts}.
\newblock J. Math. Soc. Japan \textbf{69} (2017)(2):755--800

\bibitem[RV16]{1602.07973v2}
Roberts, D.~M. and Vozzo, R.~F.
\newblock \emph{{Smooth loop stacks of differentiable stacks and gerbes}} 2016.
\newblock \eprint{arXiv:1602.07973v2}

\bibitem[Sch15]{MR3328452}
Schmeding, A.
\newblock \emph{The diffeomorphism group of a non-compact orbifold}.
\newblock Dissertationes Math. (Rozprawy Mat.) \textbf{507} (2015):179

\bibitem[Sta13]{1301.5493v1}
Stacey, A.
\newblock \emph{{Yet More Smooth Mapping Spaces and Their Smoothly Local Properties}} 2013.
\newblock \eprint{arXiv:1301.5493v1}

\bibitem[Ste24]{ste24}
Steffens, P.
\newblock \emph{Representability of elliptic moduli problems in derived $c^{\infty}$-geometry} 2024.
\newblock \eprint{2404.07931}

\bibitem[SW15]{MR3351079}
Schmeding, A. and Wockel, C.
\newblock \emph{The {L}ie group of bisections of a {L}ie groupoid}.
\newblock Ann. Global Anal. Geom. \textbf{48} (2015)(1):87--123

\bibitem[SW16a]{MR3569066}
Schmeding, A. and Wockel, C.
\newblock \emph{Functorial aspects of the reconstruction of {L}ie groupoids from their bisections}.
\newblock J. Aust. Math. Soc. \textbf{101} (2016)(2):253--276

\bibitem[SW16b]{MR3573833}
Schmeding, A. and Wockel, C.
\newblock \emph{({R}e)constructing {L}ie groupoids from their bisections and applications to prequantisation}.
\newblock Differential Geom. Appl. \textbf{49} (2016):227--276

\bibitem[WD79]{MR526764}
Whyburn, G. and Duda, E.
\newblock \emph{Dynamic topology} (Springer-Verlag, New York-Heidelberg, 1979).
\newblock Undergraduate Texts in Mathematics, With a foreword by John L. Kelley

\bibitem[ZCZ09]{ZHUO}
Zhuo~Chen, Z.-J.~L. and Zhong, D.-S.
\newblock \emph{On the existence of global bisections of lie groupoids}.
\newblock Acta Mathematica Sinica, English Series \textbf{25} (2009)(6):1001--1014

\end{thebibliography}

\end{document}